\definecolor{grey}{rgb}{0.95,0.95,0.95}
\definecolor{green}{rgb}{0.2,0.6,0.4}
\newtheorem{theorem}{Theorem}[section]
\newtheorem{lemma}[theorem]{Lemma}
\newtheorem{corollary}[theorem]{Corollary}
\theoremstyle{definition}
\newtheorem{definition}[theorem]{Definition}
\newtheorem{statement}[theorem]{Statement}
\newtheorem{remark}[theorem]{Remark}
\newtheoremstyle{principle}{}{}{\itshape}{}{\bfseries}{.}{.5em}{\thmnote{#3}#1}
\theoremstyle{principle}
\newcommand{\Nb}{\mathbb{N}}
\newcommand{\Tb}{\mathbb{T}}
\newcommand{\Psf}{\mathsf{P}}
\newcommand{\Ccal}{\mathcal{C}}
\newcommand{\Fcal}{\mathcal{F}}
\newcommand{\Gcal}{\mathcal{G}}
\newcommand{\Ical}{\mathcal{I}}
\newcommand{\Lcal}{\mathcal{L}}
\newcommand{\Mcal}{\mathcal{M}}
\newcommand{\Pcal}{\mathcal{P}}
\newcommand{\Rcal}{\mathcal{R}}
\newcommand{\uh}{{\upharpoonright}}
\renewcommand{\setminus}{\smallsetminus}
\newcommand{\cond}[1]{\left\{\begin{array}{ll} #1 \end{array}\right.}
\def\qt#1{``#1''}%
\newcommand{\s}[1]{\ensuremath{\sf{#1}}}
\DeclareMathOperator{\rca}{\s{RCA}_0}
\DeclareMathOperator{\piooca}{\Pi^1_1\s{CA}}
\DeclareMathOperator{\aca}{\s{ACA}}
\DeclareMathOperator{\atr}{\s{ATR}}
\DeclareMathOperator{\wkl}{\s{WKL}}
\DeclareMathOperator{\dnc}{\s{DNC}}
\DeclareMathOperator{\rt}{\s{RT}}
\DeclareMathOperator{\ads}{\s{ADS}}
\DeclareMathOperator{\sads}{\s{SADS}}
\DeclareMathOperator{\cac}{\s{CAC}}
\DeclareMathOperator{\emo}{\s{EM}}
\DeclareMathOperator{\semo}{\s{SEM}}
\DeclareMathOperator{\amt}{\s{AMT}}
\DeclareMathOperator{\gap}{\s{GAP}}
\DeclareMathOperator{\dgap}{\s{DGAP}}
\definecolor{lightblue}{HTML}{e6e6e6}
\definecolor{lightred}{HTML}{eca6a6}
\definecolor{lightgreen}{RGB}{164,244,140}
\newtheoremstyle{custom}
  {10pt}
  {10pt}
  {\normalfont}
  {}
  {\bfseries}
  {}
  { }
  {}
\theoremstyle{custom}
\newcommand{\N}{\mathbb{N}}
\newcommand{\dom}{\operatorname{dom}}
\newcommand{\height}{\operatorname{ht}}
\newcommand{\Dec}{\mathrm{Dec}}
\newcommand{\Ib}{\mathbb{I}}
\begin{document}

\title{Thin set theorems and cone avoidance}

\author{Peter Cholak}
\address{Department of Mathematics\\
  University of Notre Dame\\
  Notre Dame, Indiana U.S.A.}  \email{Peter.Cholak.1@nd.edu}

\author{Ludovic Patey}
\address{Institut Camille Jordan\\
  Universit\'e Claude Bernard Lyon 1\\
  43 boulevard du 11 novembre 1918\\
  F-69622 Villeurbanne Cedex} \email{ludovic.patey@computability.fr}

\thanks{This work was partially supported by a grant from the Simons
  Foundation (\#315283 to Peter Cholak) and NSF Conference grants
  (DMS-1640836 and DMS-1822193 to Peter Cholak).}

\begin{abstract}  
  The thin set theorem $\rt^n_{<\infty,\ell}$ asserts the existence, for every
  $k$-coloring of the subsets of natural numbers of size $n$, of an
  infinite set of natural numbers, all of whose subsets of size $n$
  use at most $\ell$ colors.  Whenever $\ell = 1$, the statement
  corresponds to Ramsey's theorem. From a computational viewpoint, the
  thin set theorem admits a threshold phenomenon, in that whenever the
  number of colors $\ell$ is sufficiently large with respect to the
  size $n$ of the tuples, then the thin set theorem admits strong cone
  avoidance.
  
  Let $d_0, d_1, \dots$ be the sequence of Catalan numbers.  For
  $n \geq 1$, $\rt^n_{<\infty, \ell}$ admits strong cone avoidance if
  and only if $\ell \geq d_n$ and cone avoidance if and only if
  $\ell \geq d_{n-1}$.  We say that a set $A$ is
  \emph{$\rt^n_{<\infty, \ell}$-encodable} if there is an instance of
  $\rt^n_{<\infty, \ell}$ such that every solution computes $A$.  The
  $\rt^n_{<\infty, \ell}$-encodable sets are precisely the
  hyperarithmetic sets if and only if $\ell < 2^{n-1}$, the arithmetic
  sets if and only if $2^{n-1} \leq \ell < d_n$, and the computable
  sets if and only if $d_n \leq \ell$.
\end{abstract}

\maketitle

\section{Introduction}\label{sect:introduction}

Ramsey's theorem asserts the existence, for every $k$-coloring of the
subsets of natural numbers of size $n$, of an infinite set of natural
numbers, all of whose subsets of size $n$ are monochromatic.  For
notation ease, as in many papers on Ramsey's theorem, we will consider
a set of $n$ natural numbers as an increasing $n$-tuple.  Ramsey's theorem plays a
central role in reverse mathematics, as Ramsey's theorem for pairs
historically provides an early example of a theorem which escapes the
main observation of the early reverse mathematics, namely, the \qt{Big
  Five} phenomenon~\cite{Simpson2009Subsystems}.  From a computational
viewpoint, Ramsey's theorem for $n$-tuples with $n \geq 3$ admits
computable $2$-colorings of the $n$-tuples such that every
monochromatic set computes the halting set~\cite{Jockusch1972Ramseys},
while Ramsey's theorem for pairs does not~\cite{Seetapun1995strength}.

More recently, Wang~\cite{Wang2014Some} considered a weakening of
Ramsey's theorem now known as the \emph{thin set theorem}, in which
the constraint of monochromaticity of the resulting set is relaxed so
that more colors are allowed. He proved that for every size $n$ of the
tuples, there exists a number of colors $\ell$ such that every
computable coloring of the $n$-tuples into finitely many colors, there
is an infinite set of natural numbers whose $n$-tuples have at most
$\ell$ colors and which does not compute the halting set.  On the
other hand, Dorais et al.~\cite{Dorais2016uniform} proved that
whenever the number of colors $\ell$ is not large enough with respect
to the size of the tuples $n$, then this is not the case. However, the
lower bound of Dorais et al.\ grows slower than the upper bound of
Wang. Therefore, Wang~\cite{Wang2014Some} naturally asked where the
threshold lies.

In this paper, we address this question by exhibiting the exact bound
at which this threshold phenomenon occurs, and reveal an intermediary
computational behavior of the thin set theorem whenever the number
$\ell$ of allowed colors in the outcome is exponential, but not large
enough. We also provide some insights about the nature of the
computational strength of Ramsey-type theorems.

\subsection{Reverse mathematics and Ramsey's theorem}

Reverse mathematics is a foundational program started by Harvey
Friedman, which seeks to determine the optimal axioms to prove
ordinary theorems. It uses the framework of second-order arithmetic,
with a base theory, $\rca$, informally capturing \emph{computable
  mathematics}. The early study of reverse mathematics revealed an
empirical structural phenomenon. More precisely, there are four
axioms, namely, $\wkl$ (weak K\"onig's lemma), $\aca$ (arithmetical
comprehension axiom), $\atr$ and $\piooca$, in increasing logical
order, such that almost every theorem of ordinary mathematics is
either provable in $\rca$ (hence computationally true), or equivalent
over $\rca$ to one of those four systems. These systems, together with
$\rca$, form the \qt{Big Five}~\cite{Montalban2011Open}. See
Simpson~\cite{Simpson2009Subsystems} for a reference book on the early
reverse mathematics.
 
Among the theorems studied in reverse mathematics, Ramsey's theorem
received a special attention, as it was historically the first example
of a theorem escaping this structural phenomenon. In what follows,
$[X]^n$ denotes the set of all unordered $n$-tuples over~$X$.
 
\begin{definition}[Ramsey's theorem]
  Given $n, k \geq 1$, $\rt^n_k$ is the statement \qt{For every
    coloring $f : [\omega]^n \to k$, there is an infinite
    \emph{$f$-homogeneous} set, that is, a set $H \subseteq \omega$
    such that $|f[H]^n| = 1$.}
\end{definition}

Ramsey's theorem can be seen as a mathematical problem, expressed in
terms of \emph{instances} and \emph{solutions}. Here, an instance is a
coloring $f : [\omega]^n \to k$, and a solution to $f$ is an infinite
$f$-homogeneous set.

Specker \cite{MR0278941} and Jockusch~\cite{Jockusch1972Ramseys}
studied Ramsey's theorem from a computational viewpoint.  Specker
\cite{MR0278941} showed that there is a $2$-coloring of pairs (of
natural numbers) $f$ with no infinite computable $f$-homogeneous set.
When formalized in the framework of reverse mathematics, this shows
that $\rt^2_2$ does not hold in $\rca$.  Jockusch constructed, for
every $n \geq 3$, a computable coloring $f : [\omega]^n \to 2$ such
that every infinite $f$-homogeneous set computes the halting set. When
formalized in the framework of reverse mathematics, this shows that
$\rt^n_2$ is equivalent to $\aca$ over $\rca$ whenever $n \geq 3$.

The case of Ramsey's theorem for pairs was a long-standing open
problem, until solved by Seetapun (see \cite{Seetapun1995strength})
using what is now known as \emph{cone avoidance}.

\begin{definition}
  A problem $\Psf$ admits \emph{cone avoidance} if for every pair of
  sets $C \not \leq_T Z$ and every $Z$-computable instance of $\Psf$,
  there is a solution $Y$ such that $C \not \leq_T Y \oplus Z$.
\end{definition}

Seetapun and Slaman proved that Ramsey's theorem for pairs ($\rt^2_k$)
admits cone avoidance. In particular, when taking $Z$ to be a
computable set and $C$ to be the halting set, this shows that every
computable instance of $\rt^2_k$ admits a solution which does not
compute the halting set. Proving that a problem admits cone avoidance
implies in particular that this problem does not imply~$\aca$
over~$\rca$.

\subsection{The thin set theorem}
Friedman~\cite{FriedmanFom53free} first suggested studying a weakening
of Ramsey's theorem, the thin set theorem, which asserts that every
coloring of $f : [\omega]^n \to \omega$ admits an infinite set
$H \subseteq \omega$ such that $[H]^n$ avoids at least one color, that
is, $f[H]^n \neq \omega$. We shall however consider stronger
statements defined by Miller~\cite{Montalban2011Open} and that we also
refer to as thin set theorems.

\begin{definition}[Thin set theorem]
  Given $n, \ell \geq 1$, $\rt^n_{<\infty,\ell}$ is the statement
  ``For every $k$ and every coloring $f : [\omega]^n \to k$, there is
  an infinite set $H$ such that $|f[H]^n| \leq \ell$.''
\end{definition}

$H$ is called $f,\ell$-\emph{thin}.  Most of the time we will drop the
$f$ and $\ell$ since they are understood by content.  More or less,
Friedman's thin set theorem restricted $\ell$ and $k$ to when $k=\ell+1$.
But, in the setting of reverse mathematics and computability theory, the number of colors $k$ of the instance $f : [\omega]^n \to k$ is not relevant, since 
we are allowed to apply the thin set theorem multiple times. (Apply the restricted thin set theorem
$k - \ell$ times to get the desired thin set.) Whenever $\ell = 1$, we
obtain Ramsey's theorem.


Wang~\cite{Wang2014Some} studied the thin set theorem (under the name
Achromatic Ramsey Theorem), and proved that whenever $\ell$ is
sufficiently large with respect to $n$, then $\rt^n_{<\infty, \ell}$
admits cone avoidance.  His proof is an inductive interplay between
the combinatorial and the computational weakness of the thin set
theorems, which involves the notion of \emph{strong cone avoidance}.

\begin{definition}
  A problem $\Psf$ admits \emph{strong cone avoidance} if for every
  pair of sets $C, Z$ with $C \not \leq_T Z$ and every (arbitrary)
  instance of $\Psf$, there is a solution $Y$ such that
  $C \not \leq_T Y \oplus Z$.
\end{definition}

Contrary to cone avoidance, strong cone avoidance does not consider
only $Z$-computable instances of $\Psf$, but arbitrary ones. Thus,
while cone avoidance expresses a \emph{computational weakness} of the
problem $\Psf$, strong cone avoidance reveals a \emph{combinatorial
  weakness}, in the sense that even with an unlimited amount of power
for defining the instance of $\Psf$, one cannot code the set $C$ in
its solutions.

Note that in the case of the thin set theorems, there is a formal
relationship between strong cone avoidance and cone avoidance.

\begin{theorem}\label{thm:bridge-strong-to-non-strong}
  For every $n, \ell \geq 1$, $\rt^n_{<\infty, \ell}$ admits strong
  cone avoidance if and only if $\rt^{n+1}_{<\infty, \ell}$ admits
  cone avoidance.
\end{theorem}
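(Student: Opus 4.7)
The proof connects dimensions $n$ and $n+1$ via two different lifting constructions.

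For the forward direction ($\Rightarrow$), my plan is a cohesive lift. Given a $Z$-computable coloring $f : [\omega]^{n+1} \to k$ and $C \not\leq_T Z$, I would first produce, by a Mathias-style forcing preserving cone avoidance, an infinite set $M$ with $C \not\leq_T M \oplus Z$ that is cohesive for the $Z$-computable family $\{\{x : f(\vec{s}, x) = j\} : \vec{s} \in [\omega]^n,\, j < k\}$. Each section $x \mapsto f(\vec{s}, x)$ then eventually stabilizes on $M$, so the limit coloring $\tilde f(\vec{s}) = \lim_{x \in M} f(\vec{s}, x)$ is a well-defined (arbitrary) instance of $\rt^n_{<\infty, \ell}$. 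Applying the assumed strong cone avoidance at dimension $n$, relativized to $M \oplus Z$, yields an infinite $\tilde f, \ell$-thin set $H_0 \subseteq M$ with $C \not\leq_T H_0 \oplus M \oplus Z$. A standard thinning $H \subseteq H_0$ inside $M$, selecting each new element of $H$ beyond the stabilization point of every section indexed by an earlier $n$-tuple, gives $f[H]^{n+1} \subseteq \tilde f[H_0]^n$, hence $|f[H]^{n+1}| \leq \ell$, while $C \not\leq_T H \oplus Z$ is preserved.

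For the backward direction ($\Leftarrow$), my plan is a dummy-variable lift. Given any $f : [\omega]^n \to k$ and $C \not\leq_T Z$, define $g : [\omega]^{n+1} \to k$ by $g(x_0, x_1, \ldots, x_n) = f(x_1, \ldots, x_n)$. Then $g$ is $f$-computable and $g[H]^{n+1} = f[H \setminus \{\min H\}]^n$ for every infinite $H$, so any $g, \ell$-thin set yields an $f, \ell$-thin set by removing its minimum. When $C \not\leq_T f \oplus Z$, relativizing the hypothesized cone avoidance of $\rt^{n+1}_{<\infty, \ell}$ to oracle $f \oplus Z$ and cone $C$ directly delivers such an $H$.

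The main obstacle is the complementary subcase $C \leq_T f \oplus Z$, where the direct relativization is vacuous. Here I would employ a dedicated Mathias forcing whose conditions $(E, \mathcal R)$ couple a finite $f, \ell$-thin committed segment $E$ with an infinite reservoir $\mathcal R > \max E$ such that $E \cup F$ remains $f, \ell$-thin for every finite $F \subseteq \mathcal R$. Diagonalization against each Turing functional $\Phi$ proceeds by either locating an extension $E' \supseteq E$ inside $\mathcal R$ and some $m$ forcing $\Phi^{E' \oplus Z}(m) \neq C(m)$, or, if no such extension exists, invoking a density argument to deduce $C \leq_T Z$, contradicting the assumption. The most delicate step is maintaining rich reservoirs across stages: I would arrange this by invoking cone avoidance of $\rt^{n+1}_{<\infty, \ell}$ on auxiliary colorings derived from the current condition, so that each renewed reservoir is simultaneously $f, \ell$-thin-compatible and preserves cone avoidance.
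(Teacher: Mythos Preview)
Your forward direction is essentially the paper's argument: the paper uses a sufficiently generic set for computable Mathias forcing (in place of your cohesive set) to obtain $G$ with $C \not\leq_T G \oplus Z$ on which all the section limits of $f$ exist, then applies strong cone avoidance of $\rt^n_{<\infty,\ell}$ to the limit coloring and thins out inside $G$.

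The backward direction has a genuine gap in the subcase $C \leq_T f \oplus Z$. First, your Mathias conditions $(E,\mathcal R)$ require every finite $F \subseteq \mathcal R$ to satisfy $|f[E\cup F]^n|\leq\ell$, so $\mathcal R$ itself is already $f,\ell$-thin; merely writing down an initial condition presupposes a solution. More seriously, your diagonalization dichotomy fails. To decide whether some valid extension $E'\supseteq E$ with $E'\setminus E\subseteq\mathcal R$ forces $\Phi^{E'\oplus Z}(m)\neq C(m)$, you must consult $\mathcal R$ and verify $f,\ell$-thinness, both of which may compute $C$ in this subcase. If no such $E'$ exists inside $\mathcal R$, there may still be an arbitrary finite $E'$ (not contained in $\mathcal R$, not $f,\ell$-thin) with $\Phi^{E'\oplus Z}(m)\downarrow\neq C(m)$, so you cannot conclude $C\leq_T Z$. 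Your proposed fix of renewing reservoirs via cone avoidance of $\rt^{n+1}_{<\infty,\ell}$ on ``auxiliary colorings derived from the current condition'' does not escape this: any coloring that encodes the constraint ``$E\cup F$ is $f,\ell$-thin'' is $f$-computable, hence $(f\oplus Z)$-computable, and cone avoidance relative to $f\oplus Z$ is vacuous precisely when $C\leq_T f\oplus Z$.

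The paper sidesteps the case split entirely with a different idea. It invokes a coding lemma of Patey: for any $f$ and any $C\not\leq_T Z$ there is a set $B$ with $f$ being $\Delta^0_2(B)$ and $C\not\leq_T B\oplus Z$. By Shoenfield's limit lemma there is then a $B$-computable $g:[\omega]^{n+1}\to k$ with $\lim_y g(\vec x,y)=f(\vec x)$; applying cone avoidance of $\rt^{n+1}_{<\infty,\ell}$ relative to $B\oplus Z$ yields an infinite $H$ with $|g[H]^{n+1}|\leq\ell$ (hence $|f[H]^n|\leq\ell$) and $C\not\leq_T H\oplus B\oplus Z$. The missing ingredient in your argument is exactly this lemma, which lets one absorb an arbitrary instance $f$ into the jump of a set that still avoids the cone.
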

\begin{proof}
  $\Rightarrow$ (Wang~\cite{Wang2014Some}). Fix $C \not \leq_T Z$ and
  a $Z$-computable coloring $f : [\omega]^{n+1} \to k$. Let
  $G = \{x_0 < x_1 < \dots \}$ be a sufficiently generic set for
  computable Mathias genericity~\cite{Cholak2014Generics}. In
  particular, for every $\vec{x} \in [G]^n$,
  $\lim_{y \in G} f(\vec{x}, y)$ exists. Moreover, by Wang~\cite[Lemma
  2.6]{Wang2014Some}, $C \not \leq_T G \oplus Z$. Let
  $g : [\omega]^n \to k$ be defined by
  $g(i_0, \dots, i_{n-1}) = \lim_{x_n \in G} f(x_{i_0}, \dots,
  x_{i_{n-1}}, x_n)$. By strong cone avoidance of
  $\rt^n_{<\infty, \ell}$ applied to $g$, there is an infinite set $H$
  such that $|g[H]^n| \leq \ell$ and such that
  $C \not \leq_T H \oplus G \oplus Z$.  The set $H \oplus G \oplus Z$
  computes an infinite set $S$ such that $|f[S]^{n+1}| \leq \ell$. In
  particular, $C \not \leq_T S \oplus Z$.
	
  $\Leftarrow$: Fix $C \not \leq_T Z$ and an arbitrary coloring
  $f : [\omega]^n \to k$. By Patey~\cite[Theorem
  2.6]{PateyCombinatorial}, there is a set $B$ such that $f$
  is $\Delta^0_2(B)$ and $C \not \leq_T B \oplus Z$. By Shoenfield's
  limit lemma~\cite{Shoenfield1959degrees}, there is a $B$-computable
  coloring $g : [\omega]^{n+1} \to k$ such that for every
  $\vec{x} \in [\omega]^n$, $\lim_y g(\vec{x}, y) = f(\vec{x})$. By
  cone avoidance for $\rt^{n+1}_{<\infty, \ell}$, there is an infinite set
  $H$ such that $|g[H]^{n+1}| \leq \ell$ and such that
  $C \not \leq_T B \oplus H \oplus Z$. In particular,
  $|f[H]^n| \leq \ell$. This completes the proof.
\end{proof}

Strong cone avoidance has therefore two main interests: First, it
gives some insight about the combinatorial nature of a problem $\Psf$,
by expressing the inability of the problem $\Psf$ to code some fixed
set even with an arbitrary instance. Second, it can be used as a tool
to prove that a problem does not imply $\aca$ over~$\rca$.

Wang proved that for every $n \geq 1$ and every sufficiently large
$\ell$, $\rt^n_{<\infty, \ell}$ admits strong cone avoidance, and, in
particular, cone avoidance. On the other hand, Dorais et
al.~\cite{Dorais2016uniform} proved that for every $n \geq 3$,
$\rt^n_{<\infty, 2^{n-2}-1}$ does not admit cone avoidance.

\begin{remark}
  One of the goals of this paper is to explicitly determine these
  bounds.  It turns that \emph{Catalan} and \emph{Schr\"oder} numbers
  are involved.  There are over $300$ interpretations of Catalan
  numbers, and a few dozen of Schr\"{o}der numbers.  The easiest way
  to compare them is: The $n$th Catalan number is the number of paths
  from $(0,0)$ to $(n,n)$ that take steps $(0,1)$ and $(1,0)$, and
  don't go above main diagonal; the $n$th Schr\"{o}der number is the
  same, except the paths are also allowed to take $(1,1)$ steps.  The
  best current references are \url{http://oeis.org/A000108} and
  \url{http://oeis.org/A006318}. Another reference is Stanley
  \cite{MR3467982}.
\end{remark}

The explicit bound given by Wang is the sequence of {Schr\"oder
  numbers}, which starts with
$1, 2, 6, 22, 90, 394, 1806, 8558, \dots$ and grows faster than the
lower bound of Dorais et al. In particular, this left open whether
$\rt^3_{<\infty, 5}$ and $\rt^3_{<\infty, 4}$ admit cone avoidance.

\subsection{Summary of our results}

We now give a summary on the known bounds on the threshold between
strong cone avoidance of $\rt^n_{<\infty, \ell}$ and the existence of
an instance of $\rt^n_{<\infty, \ell}$ all of whose solutions are
above a fixed cone.  We say that a set $A$ is
\emph{$\rt^n_{<\infty, \ell}$-encodable} if there is an instance of
$\rt^n_{<\infty, \ell}$ such that every solution computes~$A$.

Let $d_0, d_1, \dots$ be the sequence of Catalan numbers inductively
defined by $d_0 = 1$ and
$$
d_{n+1} = \sum_{i=0}^n d_i d_{n-i}
$$
In particular, $d_0 = 1$, $d_1 = 1$, $d_2 = 2$, $d_3 = 5$, $d_4 = 14$,
$d_5 = 42$, $d_6 = 132$, $d_7 = 429$.

\begin{theorem}The $\rt^n_{<\infty, \ell}$-encodable sets are
  precisely
  \begin{itemize}
  \item[(a)] the hyperarithmetic sets if and only if $\ell < 2^{n-1}$
  \item[(b)] the arithmetic sets if and only if
    $2^{n-1} \leq \ell < d_n$
  \item[(c)] the computable sets if and only if $d_n \leq \ell$
  \end{itemize}
\end{theorem}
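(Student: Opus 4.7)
The plan is to prove each of the three cases via an upper bound (every encodable set lies in the stated class) and a lower bound (every set in the stated class is $\rt^n_{<\infty,\ell}$-encodable). The key dictionary is that a set $A$ is $\rt^n_{<\infty,\ell}$-encodable exactly when strong cone avoidance for $\rt^n_{<\infty,\ell}$ fails with $Z = \emptyset$ and $C = A$. Hence the main combinatorial theorem of the paper (strong cone avoidance iff $\ell \geq d_n$) handles case (c) cleanly: for $\ell \geq d_n$, no noncomputable set is encodable; conversely for $\ell < d_n$, the construction witnessing failure of strong cone avoidance produces an instance whose solutions all compute a specific noncomputable set. Every computable set is trivially encodable via a suitable constant-like coloring.

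For case (b), the upper bound for $\ell \geq 2^{n-1}$ I would establish by proving an arithmetic analogue of strong cone avoidance: every instance of $\rt^n_{<\infty,\ell}$ admits an arithmetic solution when $\ell \geq 2^{n-1}$. This should follow from a Wang-style inductive construction through $n$, but with the avoidance target shifted from ``computable'' to ``arithmetic''; the threshold $2^{n-1}$ appears as the combinatorial cutoff below which the iteration cannot be closed off within the arithmetic hierarchy. For the lower bound, I would iterate the failure of strong cone avoidance: by composing the instances from case (c) across successive levels of the jump hierarchy, every $\emptyset^{(k)}$ (hence every arithmetic set) becomes $\rt^n_{<\infty,\ell}$-encodable whenever $\ell < d_n$.

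For case (a), the upper bound ``encodable implies hyperarithmetic'' should hold in general, because every instance of the thin set theorem admits a hyperarithmetic solution via a Gandy-type basis argument applied to the $\Pi^1_1$ tree of approximations to a thin set. The heart of the theorem is the lower bound when $\ell < 2^{n-1}$: every hyperarithmetic set is $\rt^n_{<\infty,\ell}$-encodable. Here I would follow the construction of Dorais et al., which encodes $\emptyset'$ into every solution of a $\rt^n_{<\infty, 2^{n-2}-1}$ instance, and extend it to a transfinite recursion indexed by computable ordinals, thereby encoding arbitrary $\Delta^1_1$ sets. The main obstacle lies precisely here: the transfinite extension requires careful amortization of the number of colors consumed at each successor and limit stage, ensuring the total count stays strictly below the $2^{n-1}$ threshold while still supporting a coding of all computable ordinals; in contrast to case (b), one cannot simply stack finitely many constructions, so the argument must be intrinsically transfinite and exploit the exponential slack between $2^{n-2}-1$ and $2^{n-1}$.
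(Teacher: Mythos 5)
Your reduction of case (c) to the strong cone avoidance characterization is essentially the paper's route, but the upper bounds you propose for (a) and (b) rest on false lemmas. You claim that every instance of $\rt^n_{<\infty,\ell}$ admits a hyperarithmetic solution, and an arithmetic one once $\ell \geq 2^{n-1}$. Since an instance is an \emph{arbitrary} coloring and there are only countably many arithmetic (resp.\ hyperarithmetic) infinite sets, one can diagonalize: list the arithmetic infinite sets $A_0, A_1, \dots$ and define $f : [\omega]^n \to \ell+1$ by placing inside each $A_i$ pairwise disjoint $n$-tuples of fresh elements realizing all $\ell+1$ colors; then $|f[A_i]^n| > \ell$ for every $i$, so no arithmetic (or, with the analogous list, hyperarithmetic) set is a solution, for any $n$ and $\ell$. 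A Gandy-type basis argument cannot repair this: a basis theorem only yields solutions that are simple \emph{relative to the instance}, and the instance is arbitrary. What is true, and what the paper uses, are cone-avoidance statements in which the solution is allowed to depend on the set being avoided: for (a), the solution class is Ramsey-like, so an encodable set is computably encodable and hence hyperarithmetic by Solovay's theorem; for (b), Theorem~\ref{thm:rtn-strong-cone-avoidance-arith} gives, for each non-arithmetical $A$ and each instance, a solution not computing $A$ --- and this is a genuinely new forcing argument over a countable Scott set satisfying $\wkl$ and Ramsey's theorem, in which $2^{n-1}$ arises as $1 + |\Dec_n|$; your ``Wang-style induction with the avoidance target shifted'' is a hope, not an argument, for it.

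The lower bound in (a) --- which you correctly identify as the heart of the matter --- is also not supplied, and the transfinite-recursion plan misdiagnoses where the threshold $2^{n-1}$ comes from. The paper's proof is a single finite construction: every hyperarithmetic $A$ has a modulus $\mu_A$ (Theorem~\ref{modulus}), and Theorem~\ref{thm:thin-set-dominating-arbitrary-g} builds $f : [\omega]^n \to 2^{n-1}$ recording which of the $n-1$ consecutive gaps of a tuple are $\mu_A$-large, so that any infinite set omitting even one of the $2^{n-1}$ gap patterns is sparse enough to compute a function dominating $\mu_A$, hence computes $A$; no recursion along computable ordinals occurs, and the exponent simply counts large/small patterns on $n-1$ gaps. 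Similarly, for the lower bound in (b) and the ``only if'' of (c), failure of strong cone avoidance by itself only provides \emph{some} pair $C \not\leq_T Z$, whereas encodability needs witnesses with $Z = \emptyset$; the paper obtains them from the largeness-graph (Catalan) colorings attached to left-c.e.\ moduli (Theorem~\ref{thm:rt354-domination-left-ce}), relativized along the jump hierarchy and combined into a single product coloring so that every arithmetical set is encodable whenever $\ell < d_n$ (Theorem~\ref{thm:rt354-every-arithmetical-set}). Those explicit colorings are the actual content of the bounds, and they are absent from your proposal.
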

\begin{proof}
  (a) By Solovay~\cite{Solovay1978Hyperarithmetically}, any
  $\rt^n_{<\infty, \ell}$-encodable set must be hyperarithmetical. By
  Theorem~\ref{modulus}, every hyperarithmetical set $A$ must have a
  modulus $\mu_A$ (also see Definition~\ref{dmodulus}).  Apply
  Theorem~\ref{thm:thin-set-dominating-arbitrary-g} to $\mu_A$ to get
  a coloring $f:[\omega]^n\rightarrow 2^{n-1}$ where every infinite
  $f$-thin computes a function which dominates $\mu_A$ and hence
  computes $A$.


  (b) In Theorem~\ref{thm:rtn-strong-cone-avoidance-arith}, we prove
  that any $\rt^n_{<\infty, \ell}$-encodable set must be arithmetical
  whenever $\ell \geq 2^{n-1}$. On the other hand, we prove in
  Theorem~\ref{thm:rtn-strong-cone-avoidance-arith} that every
  arithmetical set is $\rt^n_{<\infty, \ell}$-encodable if
  $\ell < d_n$.

  (c) In Theorem~\ref{thm:rtn-strong-cone-avoidance-real}, we prove
  that any $\rt^n_{<\infty, \ell}$-encodable set must be computable
  whenever $\ell \geq d_n$. Of course, every computable set is
  trivially $\rt^n_{<\infty, \ell}$-encodable.
\end{proof}

\begin{figure}[htbp]
  \begin{center}
    \def\arraystretch{1.5}
    \begin{tabular}{|c|c|c|c|c|}
      \hline
      Strongly computes & hyperarithmetic & arithmetic & computable\\ \hline
      $\rt^1_{<\infty, \ell}$ & & & $\ell \geq 1$\\
      $\rt^2_{<\infty, \ell}$ & $\ell = 1$ & & $\ell \geq 2$\\
      $\rt^3_{<\infty, \ell}$ & $\ell \leq 3$ & $\ell = 4$ & $\ell \geq 5$\\ 
      $\rt^4_{<\infty, \ell}$ & $\ell \leq 7$ & $8 \leq \ell \leq 13$ & $\ell \geq 14$\\
      $\rt^5_{<\infty, \ell}$ & $\ell \leq 15$ & $16 \leq \ell \leq 41$ & $\ell \geq 42$\\ \hline
    \end{tabular}
  \end{center}
  \caption{This table gives a summary of the class of
    $\rt^n_{<\infty, \ell}$-encodable sets in function of the size $n$
    of the tuples. For example, the $\rt^2_{<\infty, 4}$-encodable
    sets are exactly the arithmetical sets.}
\end{figure}

In terms of (strong) cone avoidance, we obtained the following
characterization.

\begin{theorem}
  For $n \geq 1$, $\rt^n_{<\infty, \ell}$ admits
  \begin{itemize}
  \item[(a)] strong cone avoidance if and only if $\ell \geq d_n$
  \item[(b)] cone avoidance if and only if $\ell \geq d_{n-1}$
  \end{itemize}
\end{theorem}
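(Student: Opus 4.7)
The plan is to deduce both parts from the encodable-sets theorem proved above together with the bridge theorem relating strong and ordinary cone avoidance (Theorem \ref{thm:bridge-strong-to-non-strong}).

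For part (a), the forward implication $\ell \geq d_n \Rightarrow$ strong cone avoidance should follow from the relativization of Theorem \ref{thm:rtn-strong-cone-avoidance-real}. Indeed, if for every oracle $Z$ every set that is $\rt^n_{<\infty,\ell}$-encodable relative to $Z$ is $Z$-computable, then strong cone avoidance holds: given $C \not\leq_T Z$ and an arbitrary instance $f$, if every solution $Y$ satisfied $C \leq_T Y \oplus Z$, then $C$ would be $Z$-relatively encodable and hence $Z$-computable, contradicting $C \not\leq_T Z$. For the reverse implication, suppose $\ell < d_n$. Parts (a) and (b) of the encodable-sets theorem yield a noncomputable $\rt^n_{<\infty,\ell}$-encodable set $A$, namely a hyperarithmetic one if $\ell < 2^{n-1}$ and an arithmetic one if $2^{n-1} \leq \ell < d_n$. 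Taking $f$ to witness encodability, together with $C = A$ and $Z = \emptyset$, we obtain $C \not\leq_T Z$ while every solution $Y$ satisfies $C \leq_T Y = Y \oplus Z$, so strong cone avoidance fails.

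For part (b) with $n \geq 2$, I would apply Theorem \ref{thm:bridge-strong-to-non-strong} directly: $\rt^n_{<\infty,\ell}$ admits cone avoidance if and only if $\rt^{n-1}_{<\infty,\ell}$ admits strong cone avoidance, which by part (a) is equivalent to $\ell \geq d_{n-1}$. For $n = 1$ the right-hand condition $\ell \geq d_0 = 1$ is vacuous, and cone avoidance of $\rt^1_{<\infty,\ell}$ (already for $\ell = 1$) is the classical cone-avoidance property of the pigeonhole principle: given a $Z$-computable coloring $f : \omega \to k$, pick an infinite color class and thin it to an infinite subset sparse enough to avoid the fixed cone.

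The only nontrivial verification I foresee is confirming that the proof of Theorem \ref{thm:rtn-strong-cone-avoidance-real} relativizes uniformly to every oracle $Z$: what is stated there is nominally the $Z = \emptyset$ case (encodable sets are computable), whereas strong cone avoidance requires the parametric statement for all $Z$. Since the underlying forcing construction should be entirely oracle-transparent in its combinatorics, this is expected to be a routine observation rather than a substantive step, and the rest of the argument is pure assembly from previously established results.
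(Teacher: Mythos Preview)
Your approach matches the paper's: cite Theorem~\ref{thm:rtn-strong-cone-avoidance-real} for the positive direction of (a), the encoding results (Theorem~\ref{thm:rt354-every-arithmetical-set}, or equivalently parts (a)--(b) of the encodable-sets theorem) for the negative direction, and the bridge Theorem~\ref{thm:bridge-strong-to-non-strong} for (b).

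However, you have misread Theorem~\ref{thm:rtn-strong-cone-avoidance-real}. It is not ``nominally the $Z=\emptyset$ case (encodable sets are computable)''; its statement is literally that $\rt^n_{<\infty,d_n}$ admits strong cone avoidance, i.e.\ the conclusion already ranges over all $Z$. So your entire last paragraph, and the detour through a relativized encodability argument, are unnecessary: the forward direction of (a) is a one-line citation, with nothing to verify. (The implication you are worried about actually runs the other way in the paper: part (c) of the encodable-sets theorem is \emph{deduced from} Theorem~\ref{thm:rtn-strong-cone-avoidance-real}, not conversely.)

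Your separate treatment of (b) at $n=1$ is correct and even fills a small gap in the paper's terse proof, since the bridge theorem only relates $\rt^m$ to $\rt^{m+1}$ and so does not directly yield cone avoidance for $\rt^1$. Note, though, that no thinning is needed there: for a $Z$-computable $f:\omega\to k$ every color class is $Z$-computable, so any infinite one is itself a solution $Y$ with $Y\oplus Z\equiv_T Z$, hence $C\not\leq_T Y\oplus Z$.
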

\begin{proof}
  By Theorem~\ref{thm:rtn-strong-cone-avoidance-real},
  Theorem~\ref{thm:rtn-strong-cone-avoidance-arith} and
  Theorem~\ref{thm:bridge-strong-to-non-strong}.
\end{proof}

\subsection{A brief history of the Thin Set Theorems}

Basically the idea of a thin set for a coloring is a set where the
number of colors used is less than the number of colors available to
the coloring. This idea was first raised by Friedman
~\cite{FriedmanFom53free}.  The first paper about thin sets was in
Cholak et al.~\cite{Cholak2001Free} but in a different form than the
one presented here.  As we mentioned above, Wang~\cite{Wang2014Some}
first studied the thin set theorem in the form we are considering now.
He used the name Achromatic Ramsey Theorem.  The lower bounds we
exploit appeared in Dorais et al.~\cite{Dorais2016uniform}.
Montalban~\cite{Montalban2011Open} asked about the reverse mathematics
of $\rt^n_{k,\ell}$.  
Patey \cite{Patey2016weakness} showed this hierarchy is strictly
decreasing over $\rca$, by proving that the more colors you allow in
the solution, the more non-c.e.\ definitions you can preserve
simultaneously. This is the first known strictly decreasing hierarchy.
Patey showed the same result in \cite{Patey2015Iterative} with
preservation of hyperimmunity.

There are two upcoming papers where our results play a role.  Downey
et al.~\cite{Downey:2019uf} where they prove that (strong)
preservation of $1$-hyperimmunity is the same as (strong) cone
avoidance.  Patey~\cite{Patey:2019wd} where the results here on the
thin set theorem are used as a blackbox to decide (strong) cone
avoidance for a whole class of Ramsey-like theorems. In some sense,
this shows that the thin set theorem is ``analysis-complete", in that
its analysis contains the information to understand all the other
Ramsey-like problems.

\section{Thin set theorems and sparsity}

Some degrees of unsolvability can be described by the ability to
compute fast-growing functions. For example, a Turing degree is
\emph{hyperimmune} if it contains a function which is not computably
dominated. By Martin's domination theorem, a Turing degree is
\emph{high}, that is, $\mathbf{d}' \geq \mathbf{0}''$ if it contains a
function dominating every computable function.  The notion of modulus
establishes a bridge between the ability to compute fast-growing
functions and the ability to compute sets.

\begin{definition}\label{dmodulus}
  A function $g$ \emph{dominates} a function $f$ if $g(x) \geq f(x)$
  for every $x$.  A function $\mu_X : \omega \to \omega$ is a
  \emph{modulus} for $X$ if every function dominating $\mu_X$
  computes~$X$.
\end{definition}

In the case of Ramsey's theorem and more generally Ramsey-type
theorems, one usually proves lower bounds by constructing an instance
such that every solution $H$ will be sufficiently sparse, so that its
\emph{principal function} $p_H$ is sufficiently fast-growing.

\begin{definition}
  The \emph{principal function} of an infinite set
  $X = \{x_0 < x_1 < \dots \}$ is the function $p_X$ defined by
  $p_X(n) = x_n$.
\end{definition}

Consider for example Ramsey theorem for pairs and two colors
($\rt^2_2$).  Given an arbitrary function $g : \omega \to \omega$, one
can define a function $f : [\omega]^2 \to 2$ by $f(x, y) = 1$ if
$g(x) \leq y$ and $f(x, y) = 0$ otherwise. Then every infinite
$f$-homogeneous set $H$ will be of color 1, and the principal function
$p_H$ will dominate $g$.  By taking $g$ to be a modulus for the
halting set, this proves that $\rt^2_2$ does not admit strong cone
avoidance.

\subsection{Ramsey-type theorems and sparsity}

For many theorems in Ramsey Theory, the class of solutions to each
instance always has certain structural features. 
In what follows, $[X]^\omega$ denotes the class of all
infinite subsets of~$X$.

\begin{definition}
  A class $\Ccal \subseteq [\omega]^\omega$ is \emph{dense} if
  $(\forall X \in [\omega]^\omega)[X]^\omega \cap \Ccal \neq
  \emptyset$, and is \emph{downward-closed} if
  $(\forall X \in \Ccal)[X]^\omega \subseteq \Ccal$.  A class
  $\Ccal \subseteq [\omega]^\omega$ is \emph{Ramsey-like} if it is
  dense and downward-closed.
\end{definition}

One can easily check that given an instance $f : [\omega]^n \to k$ of
the thin set theorems, and some $\ell$, the collection of all infinite
sets $H$ such that $|f[H]^n| \leq \ell$ is a Ramsey-like class. Since
every Ramsey-like class is closed under subset, one can intuitively
only compute with positive information, in that the absence of an
integer in a solution $H$ is not informative. It is therefore natural
to conjecture that the only computational power of Ramsey-type
theorems comes from the sparsity of their solutions, and thus from
their ability to compute fast-growing functions.

The first result towards this intuition is the characterization of the
sets which are computed by a downward-closed class, as those which
admit a modulus function.  We say that a set $A$ is \emph{computably
  encodable} if every infinite set has an infinite subset which computes $A$ (so
there is a dense class $\Ccal \subseteq [\omega]^\omega$, all of whose
elements compute $A$).


\begin{theorem}\label{modulus}
  [Solovay~\cite{Solovay1978Hyperarithmetically}, Groszek and
  Slaman~\cite{Groszek2007Moduli}] Given a set $A$, the following are
  equivalent
  \begin{itemize}
  \item[(a)] $A$ is computably encodable
  \item[(b)] $A$ is hyperarithmetic
  \item[(c)] $A$ admits a modulus
  \end{itemize}
\end{theorem}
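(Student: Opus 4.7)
The plan is to prove the cycle (c) $\Rightarrow$ (a) $\Rightarrow$ (b) $\Rightarrow$ (c). The first implication is straightforward and constructive, while the latter two are the substantive contents of Solovay's and Groszek--Slaman's theorems respectively; they invoke deeper machinery from effective descriptive set theory and transfinite recursion.

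For (c) $\Rightarrow$ (a), given a modulus $\mu_A$ for $A$ and an arbitrary infinite set $X \subseteq \omega$, I would recursively choose $y_n \in X$ to be the least element of $X$ strictly above both $y_{n-1}$ and $\mu_A(n)$, and take $Y = \{y_0 < y_1 < \dots\}$. The principal function $p_Y$ then dominates $\mu_A$, so $Y$ computes $A$; and $Y \in [X]^\omega$. This shows the class of infinite sets computing $A$ is dense, i.e. $A$ is computably encodable. For (a) $\Rightarrow$ (b), I would follow Solovay's original argument: supposing towards a contradiction that $A$ is computably encodable but not hyperarithmetic, I would exhibit an infinite set $X$ having no infinite subset computing $A$. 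The key tool is $\Sigma^1_1$-boundedness applied to the $\Sigma^1_1(A)$ class of infinite $X \in 2^\omega$ such that no $\Phi_e^Y = A$ for any $Y \in [X]^\omega$; the failure of $A$ to be hyperarithmetic forces this class to be nonempty, contradicting computable encodability of $A$.

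For (b) $\Rightarrow$ (c), I would carry out Groszek and Slaman's transfinite inductive construction of a modulus, indexed by Kleene's $\mathcal{O}$. At the base (computable) stage the constant zero function is already a modulus. At a successor stage, given a modulus $\mu_B$ for $B$, I would build a modulus for $B'$ by first capturing enough of $\mu_B$'s domination behavior to compute $B$, and then using $B$-computable approximations to $B'$ to bound the settling times of the jump. At limit stages, indexed by $e \in \mathcal{O}$ with $|e| = \lambda$, I would use effective transfinite recursion to amalgamate the previously constructed moduli $\mu_{e_n}$ into a single function $\mu_\lambda$ that dominates enough of each to recover each $B_{e_n}$ uniformly, and hence all of $B_\lambda = \bigoplus_n B_{e_n}$.

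The main obstacle is (b) $\Rightarrow$ (c): the amalgamation at limit stages must be done uniformly in the path through $\mathcal{O}$, and any function dominating the constructed modulus must be able to compute, uniformly in $n$, a function dominating $\mu_{e_n}$, so that the inductive hypothesis can be iterated. The implication (a) $\Rightarrow$ (b) is conceptually subtle but technically short once the $\Sigma^1_1$-boundedness framework is in place; (c) $\Rightarrow$ (a) is essentially a diagonalization and presents no difficulty.
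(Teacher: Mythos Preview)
The paper does not prove this theorem: it is stated with attribution to Solovay and to Groszek and Slaman and then used as a black box, so there is no argument in the paper to compare yours against.

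On the merits of your sketch: the implication (c) $\Rightarrow$ (a) is correct as written, and your outline of (b) $\Rightarrow$ (c) by effective transfinite recursion along $\mathcal{O}$ is indeed the Groszek--Slaman approach. Your sketch of (a) $\Rightarrow$ (b), however, does not work as stated. The class of infinite $X$ such that no infinite $Y \subseteq X$ computes $A$ is $\Pi^1_1(A)$, not $\Sigma^1_1(A)$: the quantifier over infinite subsets $Y$ is universal. More importantly, $\Sigma^1_1$-boundedness is a statement about $\Sigma^1_1$ sets of (codes for) well-orderings being bounded below $\omega_1^{CK}$; you have not identified any such set of ordinals, and it is unclear what you would bound or how a bound would produce an $X$ with no subset computing $A$. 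Your sentence ``the failure of $A$ to be hyperarithmetic forces this class to be nonempty'' is exactly the content of the implication and cannot simply be asserted. Solovay's argument proceeds by a genuinely different route --- one clean modern version uses Mathias forcing over the hyperarithmetic sets, exploiting that every infinite subset of a Mathias generic is again Mathias generic and that such generics compute no non-hyperarithmetic real; another extracts a $\Delta^1_1$ definition of $A$ directly from the encodability hypothesis. Either way, the (a) $\Rightarrow$ (b) step needs a real argument beyond what you have outlined.
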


In particular, since the set of solutions of an instance $f$ of
$\rt^n_{<\infty, \ell}$ is Ramsey-like, if every solution to $f$
computes a set $A$, then $A$ is computably encodable, hence admits a
modulus.

One can obtain a more precise result when the computation is
uniform.\footnote{The authors thank Lu Liu for letting them include
  Theorem~\ref{thm:dominating-solution-ramsey-type} and
  Theorem~\ref{thm:ramsey-type-non-uniform-computation-fails} in the
  paper.} A function $g$ is a \emph{uniform modulus} for a set $A$ if
there is a Turing functional $\Phi$ such that $\Phi^f = A$ for every
function $f$ dominating $g$. The following proposition shows that the
uniform computation of a set by a Ramsey-type class can only be done
by the sparsity of its members.

\begin{theorem}[Liu and Patey]\label{thm:dominating-solution-ramsey-type}
  Fix a set $A$ and a Ramsey-like class $\Ccal$.  If there is a Turing
  functional $\Phi$ such that $\Phi^H = A$ for every $H \in \Ccal$,
  then $p_H$ is a uniform modulus of $A$ for every $H \in \Ccal$.
\end{theorem}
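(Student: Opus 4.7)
Fix $H \in \Ccal$. The plan is to construct a Turing functional $\Psi$ (depending on $H$) such that $\Psi^f = A$ for every $f$ dominating $p_H$. The starting observation is that for each input $x$, the computation $\Phi^H(x)$ uses only a finite initial segment of $H$: writing $u(x)$ for its use, and $n^*(x)$ for the least $n$ with $p_H(n) \geq u(x)$, the ``canonical'' tuple $\sigma^*_x := (p_H(0), \dots, p_H(n^*(x)))$ satisfies $\Phi^{\sigma^*_x}(x) \downarrow = A(x)$ with use $\leq \max \sigma^*_x = p_H(n^*(x))$. When $f$ dominates $p_H$, we have $p_H(i) \leq f(i)$, so $\sigma^*_x$ lies in the finitely branching, $f$-computable tree $T^f := \{\sigma \in \omega^{<\omega} : \sigma \text{ strictly increasing},\ \sigma_i \leq f(i)\}$.

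The algorithm for $\Psi^f(x)$ is to enumerate tuples $\sigma \in T^f$ by length and, at each level $n$, to check whether there is a unique value $y$ such that every $\sigma \in T^f$ of length $n$ with $\Phi^\sigma(x) \downarrow$ at use $\leq \max \sigma$ returns $y$; the functional then halts with that $y$. Correctness, assuming termination, is immediate: at level $n^*(x)+1$ the tuple $\sigma^*_x$ is already enumerated and returns $A(x)$, so any unique value at that level or later must be $A(x)$.

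The main obstacle is \emph{termination}, i.e.\ showing that for all large $n$ every $\sigma \in T^f$ of length $n$ satisfying the halt condition returns $A(x)$. My plan is a K\"onig-style compactness argument: the set $T^f_{\text{bad}}$ of tuples for which $\Phi$ has not yet committed to $A(x)$ with use $\leq \max \sigma$ is a finitely branching $f$-computable subtree of $T^f$, so it suffices to show it has no infinite path. If $\alpha$ were such a path, the set $S_\alpha := \{\alpha(i) : i \in \omega\}$ is infinite, and by the density of $\Ccal$ it contains an infinite subset $Y \in \Ccal$; then $\Phi^Y(x) = A(x)$ with some finite use $u_Y$. The delicate step is translating this into a ``commitment'' of $\Phi$ along the actual prefixes of $\alpha$ rather than along the thinned-out $Y$: here I would exploit downward-closure, which gives an abundance of infinite sets in $\Ccal$ inside $S_\alpha$, so that one can arrange $Y$'s elements in $[0, \max(\alpha\!\uh\! n)]$ to coincide with $\alpha$'s, forcing agreement of the queried bits and hence $\Phi^{\alpha\uh n}(x) = A(x)$ for large $n$, contradicting $\alpha \subseteq T^f_{\text{bad}}$. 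This bridging between a path $\alpha$ and its $\Ccal$-subset $Y$ is the technical heart of the argument; everything else is a clean enumeration.
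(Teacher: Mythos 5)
There is a genuine gap, and it is in the part you treat as routine rather than in the step you flag. Your functional halts at the \emph{first} level $n$ at which all $\sigma \in T^f$ of length $n$ with $\Phi^\sigma(x)\downarrow$ (use $\leq \max\sigma$) agree, but your correctness argument only covers levels $\geq n^*(x)+1$. Nothing forces the honest tuples coming from $H$ to have converged at small levels (their use typically exceeds $\max\sigma$ there), while $\Phi$ is completely unconstrained on oracles that are not segments of members of $\Ccal$, so a wrong value can be unanimously attained early. Concretely, let $A = \emptyset'$, let $m_y$ be the settling time of $y$, let $g(n) = n + 2 + \max_{y \leq n} m_y$, and let $\Ccal = \{ Y \in [\omega]^\omega : p_Y \geq g \}$, which is dense and downward closed. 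Define $\Phi^Z(x)$ to output $0$ if $0 \in Z$, and otherwise to output $\emptyset'_s(x)$ where $s$ is the $(x{+}1)$st element of $Z$ (diverging if there is none). Every $Y \in \Ccal$ omits $0$ and has $p_Y(x) \geq m_x$, so $\Phi^Y = A$ on $\Ccal$. Now take $H \in \Ccal$, $f = p_H$, and $x \in \emptyset'$ with $x \geq 2$. At each level $1 \leq n \leq x$, the only tuples of $T^f$ on which $\Phi$ converges with bounded use are those containing $0$, and they all output $0$; so your $\Psi^f(x)$ halts with $0 \neq 1 = A(x)$. Worse, at every level $n > x$ both values occur among halting tuples (tuples containing $0$ give $0$, the length-$n$ prefix of $H$ gives $1$), so even your termination claim ``for all large $n$ every halting $\sigma$ returns $A(x)$'' fails: level-by-level agreement of prefix computations is not the right invariant at all. (There is also a smaller effectivity issue: ``every halting $\sigma$ at level $n$ agrees'' is a $\Pi^0_1$ condition, so the level check as described is not literally a Turing functional.)

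The step you yourself call delicate is indeed where the architecture has to change, and the paper's fix is instructive. Passing from $Y \in \Ccal$ with $Y \subseteq S_\alpha$ to a commitment along the prefixes $\alpha$ cannot work, because $Y$ omits elements of $S_\alpha$ below the use, so the oracles genuinely differ on the queried segment, and the finite set of $\alpha$'s small elements may not extend to any member of $\Ccal$ whatsoever (in the example above no member of $\Ccal$ contains $0$). The paper instead quantifies over \emph{subsets of the oracle} rather than over tree prefixes: for $v < 2$ it forms the $\Pi^{0,f}_1$ class $\Ical_{x,v}$ of all sets $G$ with $p_G$ dominated by $f$ such that every $E \subseteq G$ with $\Phi^E(x)\downarrow$ outputs $v$; the set $W$ of pairs $(x,v)$ with $\Ical_{x,v}$ empty is $f$-c.e.\ by compactness, $\Ical_{x,A(x)}$ always contains $H$ (downward closure), and $\Ical_{x,1-A(x)}$ must be empty, since density yields $K \in [G]^\omega \cap \Ccal$ --- a subset, which is exactly what the definition quantifies over --- with $\Phi^K(x) = A(x)$. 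The functional then waits for some $(x,v)$ to enter $W$ and outputs $1-v$. Note how the subset quantification is precisely what neutralizes the pathological behavior of $\Phi$ off $\Ccal$ in the counterexample: the sets containing $0$ are not in $\Ical_{x,1}$, but they also cannot witness nonemptiness of $\Ical_{x,0}$, because they have subsets in $\Ccal$ computing the true value. If you replace both your halting criterion and your per-prefix notion of commitment by this subset-quantified class, your argument goes through --- but at that point it is the paper's proof rather than a repaired version of yours.
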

\begin{proof}
  Let $H \in \Ccal$ and $f$ be a function dominating $p_H$.  For every
  $x \in \omega$ and $v \in \{0,1\}$, let $\Ical_{x, v}$ be the
  $\Pi^{0,f}_1$ class of all sets $G$ such that $p_G$ is dominated by
  $f$, and such that for every $E \subseteq G$,
  $\Phi^E(x)\downarrow \rightarrow \Phi^E(x) = v$.  Note that
  $\Ical_{x,v}$ is $\Pi^{0,f}_1$ uniformly in $f$, $x$ and $v$.  It
  follows that the set $W = \{ (x, v) : \Ical_{x,v} = \emptyset \}$ is
  $f$-c.e.\ uniformly in $f$.  Since $\Ccal$ is closed downward,
  $H \in \Ical_{x,A(x)}$, hence $(x, A(x)) \not \in W$ for every $x$.
  We have two cases.
  \begin{itemize}
  \item Case 1: There is some $x$ such that
    $(x, 1-(A(x))) \not \in W$.  Let $G \in \Ical_{x, 1-A(x)}$.  Since
    $\Ccal$ is dense, there is some $K \in [G]^\omega \cap \Ccal$, and
    by definition of $\Ical_{x, 1-A(x)}$,
    $\Phi^K(x)\downarrow \rightarrow \Phi^K(x) = 1-A(x)$, but
    $\Phi^K(x) = A(x)$ by assumption. Contradiction.
  \item Case 2: For every $x$, $(x, 1-A(x)) \in W$. Since
    $(x, A(x)) \not \in W$, we can $W$-compute $A$, and this uniformly
    in $f$.
  \end{itemize}
  This completes the proof.
\end{proof}

However, the previous theorem cannot be generalized to non-uniform
computations. In particular, there exists a set $A$ and an instance
$f$ of $\rt^2_2$ such that every solution to $f$ computes $A$, but not
through sparsity.

\begin{theorem}[Liu and Patey]\label{thm:ramsey-type-non-uniform-computation-fails}
  Let $A$ be a hyperarithmetical set.  There exists a function
  $f: [\omega]^2 \to 2$ such that
  \begin{itemize}
  \item[(a)] Every infinite $f$-homogeneous set computes $A$
  \item[(b)] For each $i < 2$, there is an infinite $f$-homogeneous
    set $H$ for color $i$ such that $p_H$ is dominated by a computable
    function.
  \end{itemize}
\end{theorem}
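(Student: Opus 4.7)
The strategy exploits two facts about hyperarithmetic sets. First, by Theorem~\ref{modulus}, $A$ admits a modulus function $\mu_A$, so any set whose principal function dominates $\mu_A$ computes $A$. Second, dense sets with slow principal functions can be designed to encode $A$ directly in the pattern of their elements (without any constraint on the complexity of the set, since $f$ itself is unconstrained). My plan is to design $f$ so that every infinite $f$-homogeneous set $H$ computes $A$ by exactly one of these two mechanisms, selected non-uniformly according to the sparsity of $H$; this non-uniformity is precisely what Theorem~\ref{thm:dominating-solution-ramsey-type} rules out in the uniform case.

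Concretely, I would fix a computable partition of $\omega$ into ``modulus blocks'' $I_n$ governed by $\mu_A$ and construct two designated witness sets $H_0$ and $H_1$ with computably bounded principal functions, whose intersections with the blocks redundantly encode $A$ so that $H_0, H_1 \geq_T A$. The coloring $f$ is then chosen to make $H_0$ color-$0$ homogeneous and $H_1$ color-$1$ homogeneous, while the coloring on pairs not lying entirely inside $H_0$ or entirely inside $H_1$ is designed to force any infinite homogeneous set crossing blocks in a ``generic'' manner to have principal function dominating $\mu_A$. Clause (b) is then immediate from the construction of $H_0, H_1$, and clause (a) follows by case analysis: each infinite $f$-homogeneous $H$ is either (i) a subset of $H_0$ or $H_1$, where the redundant encoding of $A$ is inherited and $H \geq_T A$ directly, or (ii) sufficiently sparse that $p_H$ dominates $\mu_A$, recovering $A$ via the modulus.

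The main obstacle is that homogeneity is closed under taking infinite subsets, so every infinite subset of $H_0$ (and of $H_1$) must itself compute $A$. A very sparse infinite subset of $H_0$ is handled by the modulus regime, but a dense infinite subset may miss many code positions yet still have slow principal function, so it must be covered by the direct encoding. The resolution is to spread each bit $A(n)$ redundantly across infinitely many positions in $H_0$, chosen so that any infinite subset of $H_0$ hits infinitely many encoding slots for each $n$, from which $A(n)$ can be read off. Calibrating this redundancy against the slow-growth requirement on $p_{H_i}$, while using the limited two-color palette to exclude unwanted cross-block homogeneous sets that neither dominate $\mu_A$ nor inherit the encoding, is the delicate technical heart of the construction.
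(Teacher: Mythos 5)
There is a genuine gap: the two mechanisms you isolate (modulus for sparse solutions, direct coding for dense ones) are indeed the right ingredients, but the construction that makes them coexist in a single $2$-coloring is exactly what you defer to ``the delicate technical heart,'' and your proposed case analysis is not justified as stated. First, the redundancy scheme: you need a dense set $H_0$ such that \emph{every} infinite subset computes $A$, with sound decoding. The paper gets this for free by taking $H_0$ to be (a copy of) $\{h(\sigma) : \sigma \prec A\}$ for a computable bijection $h : 2^{<\omega} \to \omega$: since the initial segments of $A$ form a chain, any infinite subset contains codes of arbitrarily long prefixes of $A$, and soundness is enforced by the coloring itself, because a non-code $x$ receives color $1$ with every $y \geq g(x)$ (where $g$ is a modulus of $A$), so an infinite color-$0$ homogeneous set can contain genuine codes only. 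Your sketch of spreading each bit over infinitely many slots does not address this soundness issue: a dense homogeneous set could contain fake code positions, and nothing in your plan shows the two-color palette excludes them.

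Second, your dichotomy ``every infinite homogeneous $H$ is either contained in $H_0$ or $H_1$, or is sparse'' is stronger than needed and is not something your construction is shown to enforce; with only two colors it is unclear how to forbid homogeneous sets that mix dense code elements with other elements (such mixed sets need not be sparse and need not lie inside a witness set). Indeed, the paper's own coloring does not satisfy your dichotomy. The paper's resolution is different: it first builds a \emph{one-sided} coloring $\tilde f$ in which color-$0$ homogeneous sets consist of genuine prefix codes and color-$1$ homogeneous sets have principal function dominating $g$ (so clause (a) holds for $\tilde f$, but only color $0$ admits a slow homogeneous set), and then duplicates $\tilde f$ on the even numbers and its flip $1-\tilde f$ on the odd numbers. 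Any infinite $f$-homogeneous set then has an infinite half (evens or odds) that is $\tilde f$-homogeneous, so it computes $A$, while the two copies of the prefix-code set, $\{2h(\sigma):\sigma\prec A\}$ and $\{2h(\sigma)+1:\sigma\prec A\}$, give slow homogeneous sets of both colors. Without this (or some substitute for it), your plan of directly building one coloring with two dense encoded witness sets and a forced sparsity regime elsewhere remains a proposal rather than a proof.
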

\begin{proof}
  Let $g : \omega \to \omega$ be a modulus for $A$.  Let
  $h : 2^{<\omega} \to \omega$ be a computable bijection.  Let
  $\tilde{f} : [\omega]^2 \to 2$ be defined by
$$
\tilde{f}(x, y) = \cond{
  0 & \mbox{ if } x = h(\sigma) \mbox{ with } \sigma \prec A \mbox{ or } y < g(x)\\
  1 & \mbox{ otherwise } }
$$
We claim that every infinite $\tilde{f}$-homogeneous set computes $A$.
If $H$ is an infinite $\tilde{f}$-homogeneous set for color 0, then
$H \subseteq \{ h(\sigma) : \sigma \prec A \}$, in which case $H$
computes $A$.  If $H$ is an infinite $\tilde{f}$-homogeneous set for
color 1, then $p_H \geq g$, and again, $H$ computes $A$.

$$
f(x, y) = \cond{
  \tilde{f}(x_1, y_1) & \mbox{ if } x = 2x_1 \mbox{ and } y_1 = \lfloor y/2\rfloor \\
  1- \tilde{f}(x_1, y_1) & \mbox{ if } x = 2x_1+1 \mbox{ and } y_1 = \lfloor y/2\rfloor\\
}
$$
We claim that every infinite $f$-homogeneous set computes an infinite
$\tilde{f}$-homogeneous set. Let $H$ be an infinite $f$-homogeneous
set for some color $i < 2$.  One of the two sets is infinite:
$$
H_0 = \{x_1 : 2x_1 \in H\} \mbox{ and } H_1 = \{x_1 : 2x_1+1 \in H\}
$$
Moreover, for every $x_1 < y_1 \in H_0$,
$\tilde{f}(x_1, y_1) = f(2x_1, 2y_1) = i$ and for every
$x_1, y_1 \in H_1$, $\tilde{f}(x_1, y_1) = 1-f(2x_1+1, 2y_1+1) = 1-i$.
Therefore, both $H_0$ and $H_1$ are $\tilde{f}$-homogeneous.

Last, we prove (b). Let
$$
G_0 = \{2h(\sigma) : \sigma \prec A\} \mbox{ and } G_1 =
\{2h(\sigma)+1 : \sigma \prec A\}
$$
For each $i < 2$, $G_i$ is an infinite $f$-homogeneous set for color
$i$.  Moreover, $p_{G_0}$ and $p_{G_1}$ are both dominated by the
computable function which to $n$, associates
$\max\{ 2h(\sigma)+1 : |\sigma| = n\}$.  This completes the proof.
\end{proof}

Last, if we restrict ourselves to computable instances, one can prove
that even non-uniform computation is done by sparsity.  The following
lemma tells us in some sense that whenever considering computable
instances of $\rt^n_{<\infty,\ell}$, the analysis of the functions the
thin sets dominate can be done, without loss of generality, by the
study of the sparsity of these thin sets.

\begin{definition}
  Given a function $g : \omega \to \omega$, a \emph{domination
    modulus} is a function $\nu_g : \omega \to \omega$ such that every
  function dominating $\nu_g$ computes a function dominating~$g$.
\end{definition}

\begin{theorem}
  Fix a function $g : \omega \to \omega$.  Let $f : [\omega]^n \to k$
  be a computable instance of $\rt^n_{<\infty,\ell}$ such that every
  solution computes a function dominating $g$. Then for every solution
  $H$, $p_H$ is a domination modulus for $g$.
\end{theorem}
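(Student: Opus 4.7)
The plan is to use the hyperimmune-free basis theorem to extract, from any function $F$ dominating $p_H$, an $F$-computable function dominating $g$. The key observation is that the solutions to $f$ whose principal function is bounded by $F$ form a nonempty compact $\Pi^{0,F}_1$ class, to which the basis theorem can be applied.

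First I would fix an arbitrary $F$ dominating $p_H$ and define the $F$-computable tree $T_F$ of strictly increasing finite sequences $\sigma$ of length $m$ satisfying (i) $\sigma(i) \leq F(i)$ for every $i < m$, and (ii) the finite set $\{\sigma(i) : i < m\}$ uses at most $\ell$ colors under $f$. Because $f$ is computable and $F$ bounds the branching at each level, $T_F$ is an $F$-computable finitely branching tree. Its infinite paths are precisely the infinite $f$-thin sets $G$ with $p_G \leq F$: the number of colors assigned by $f$ to the $n$-tuples from $\{\sigma(i) : i < m\}$ is nondecreasing in $m$ and bounded by $\ell$, so it stabilizes in the limit at a value at most $\ell$. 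Since $H$ itself yields such a path, $[T_F]$ is nonempty.

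Then I would apply the hyperimmune-free basis theorem of Jockusch and Soare, relativized to $F$, to $[T_F]$, obtaining some $G \in [T_F]$ such that every $(G \oplus F)$-computable function is dominated by an $F$-computable function. By hypothesis on $f$, since $G$ is an infinite $f$-thin set, some Turing functional $\Phi$ makes $\Phi^G$ total and dominating $g$. Because $\Phi^G$ is $(G \oplus F)$-computable, the conclusion of the basis theorem provides an $F$-computable function $h$ dominating $\Phi^G$, and therefore dominating $g$. As $F$ was an arbitrary function dominating $p_H$, this shows that $p_H$ is a domination modulus for $g$.

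The only subtlety I see is verifying that paths through $T_F$ really give $f$-thin sets in the infinite limit (as opposed to merely having every finite initial segment thin), but this is immediate from monotonicity of color sets under inclusion. Beyond that, the argument is a direct application of a classical basis theorem, so I anticipate no serious obstacle.
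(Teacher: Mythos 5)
Your argument is correct and is essentially the paper's own proof: both build the $F$-bounded (resp.\ $h$-bounded) tree of finite $f$-thin approximations, note that $H$ witnesses it is infinite, and apply the hyperimmune-free basis theorem relative to the dominating function to extract a solution whose computed dominating function transfers to an $F$-computable one. The only difference is presentational --- you argue directly for every $F$ dominating $p_H$, while the paper runs the same construction by contradiction --- and your explicit restriction to strictly increasing sequences is a harmless tidying of the same tree.
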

\begin{proof}
  Fix $g$ and $f$. Suppose for the sake of contradiction that there is
  an infinite set $H$ such that $|f[H]^n|\leq \ell$ and such that
  $p_H$ is not a domination modulus for~$g$. By definition, there is a
  function~$h$ dominating $p_H$ such that $h$ does not computes a
  function dominating~$g$.  Let $T \subseteq \omega^{<\omega}$ be the
  $h$-computably bounded tree defined by
	$$
	T = \left\{ \sigma \in \omega^{<\omega} : (\forall x <
          |\sigma|)\sigma(x) < h(x)) \wedge |f[\sigma]^n|\leq \ell
        \right\}
	$$

        In particular, $H \in [T]$, so the tree is infinite. Moreover,
        any infinite path through $T$ is an
        $\rt^n_{<\infty,\ell}$-solution to $f$. By the
        hyperimmune-free basis theorem~\cite{Jockusch197201} relative
        to $h$, there is an infinite set $S \in [T]$ such that every
        $S$-computable function is dominated by an $h$-computable
        function. In particular, $S$ does not compute a function
        dominating~$g$. Contradiction.
      \end{proof}



      \section{The strength of the thin set
        theorems}\label{sect:strength-ts}

      In this section, we study the ability of thin set theorems to
      compute fast-growing functions. More precisely, given a fixed
      function $g : \omega \to \omega$ and some $n \geq 1$, we
      determine the largest number of colors $\ell$ such that there
      exists an instance of $\rt^n_{<\infty,\ell}$ such that every
      solution computes a function dominating $g$. Thanks to the
      notion of modulus, we will then apply this analysis to determine
      which sets can be computed by $\rt^n_{<\infty,\ell}$ whenever
      $n, \ell \geq 1$.

      As explained, one approach to prove that $\rt^n_{<\infty,\ell}$
      implies the existence of fast-growing functions is to define a
      coloring $f : [\omega]^n \to k$ such that every solution $H$ is
      sparse, and then use the principal function of~$H$.

\begin{definition}
  Given a function $g : \omega \to \omega$, an interval $[a, b]$ is
  \emph{$g$-large} if $b \geq g(a)$.  Otherwise, it is
  \emph{$g$-small}. By extension, we say that a finite set $F$ is
  $g$-large ($g$-small) if $[\min F, \max F]$ is $g$-large
  ($g$-small).
\end{definition}

Given a function $f : [\omega]^n \to k$, we say that a set
$H \subseteq \omega$ is \emph{$f$-thin} if $|f[H]^n|\leq k-1$.

\begin{theorem}\label{thm:thin-set-dominating-arbitrary-g}
  For every function $g : \omega \to \omega$ and every $n \geq 1$,
  there is a $g$-computable function $f : [\omega]^n \to 2^{n-1}$ such
  that every infinite $f$-thin set computes a function dominating~$g$.
\end{theorem}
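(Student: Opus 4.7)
The plan is to use the \emph{gap-bit} coloring (the case $n=1$ is vacuous, since no $f:\omega\to 2^0$ admits an infinite thin set). Assume without loss of generality that $g$ is non-decreasing by replacing it with $x\mapsto\max_{y\le x}g(y)$, and for $x_0<x_1<\cdots<x_{n-1}$ set
\[
  f(x_0,\dots,x_{n-1})=(\beta_0,\dots,\beta_{n-2})\in\{0,1\}^{n-1}
\]
with $\beta_i=1$ when $g(x_i)\le x_{i+1}$ and $\beta_i=0$ otherwise. This $f$ is $g$-computable and takes $2^{n-1}$ values. The structural claim I aim for is: for any infinite $f$-thin $H=\{h_0<h_1<\cdots\}$, cofinitely many indices $j$ satisfy that the consecutive block $(h_j,\dots,h_{j+n-1})$ has some $\beta_i=1$. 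Granting this, $h_{j+n-1}\ge g(h_{j+i})\ge g(h_j)$ by monotonicity, so the $H$-computable function $F(x):=h_{j+n-1}$ with $h_j=\min\{h\in H:h\ge x\}$ dominates $g$ cofinitely; a finite patch of the oracle machine then gives $F\le_T H$ dominating $g$ everywhere.

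Let $\sigma^*\in\{0,1\}^{n-1}$ be a color missing from $f[H]^n$, and call $(h_j,\dots,h_{j+n-1})$ a \emph{cluster} when every $\beta_i$ of that tuple vanishes. The structural claim then reduces to showing $H$ contains only finitely many clusters. The extreme case $\sigma^*=(1,\dots,1)$ is excluded immediately: choosing $x_0=h_0$ and inductively $x_{k+1}\in H$ with $x_{k+1}\ge g(x_k)$ realizes color $(1,\dots,1)$ in $[H]^n$, contradicting its absence. The opposite extreme $\sigma^*=(0,\dots,0)$ is immediate in the other direction, since any cluster already has this color and therefore no cluster can exist at all.

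The remaining case $\sigma^*\notin\{(0,\dots,0),(1,\dots,1)\}$ is handled by a \emph{cluster-splicing} argument. Decompose $\sigma^*=0^{a_0}1\,0^{a_1}1\cdots 1\,0^{a_s}$ with $s\ge 1$ the number of ones and each $a_i\ge 0$. Supposing for contradiction that $H$ contains infinitely many clusters, I inductively select $s+1$ of them $C_1,\dots,C_{s+1}$ so that the first element of each $C_{i+1}$ is at least $g$ of the last element of $C_i$; this is possible because cluster start-indices are unbounded and $g$ is fixed, so any sufficiently high cluster witnesses the spacing. Taking $a_{i-1}+1$ consecutive elements from $C_i$ and concatenating them in order of increasing $i$ produces an $n$-tuple whose within-cluster transitions contribute $0$-bits (by the cluster definition) and whose $s$ between-cluster transitions contribute $1$-bits (by the spacing), so the color is exactly $\sigma^*$, contradicting its missingness.

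The main obstacle will be the bookkeeping in the cluster-splicing step: checking that the block sizes add correctly via $\sum_{i=0}^s(a_i+1)=(n-1-s)+(s+1)=n$, that each within-cluster consecutive pair really has $\beta$-bit $0$ (direct from the cluster definition), and that the inductive choice of sufficiently spaced clusters always succeeds. All three are routine once the decomposition of $\sigma^*$ is laid out alongside the block structure of the concatenated tuple, but aligning them cleanly is the heart of the proof.
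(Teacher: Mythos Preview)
Your proof is correct and uses the same gap-bit coloring as the paper, but your analysis of the thin set is organized differently. The paper argues by induction on $n$: given the missing color $\langle j_0,\dots,j_{n-2}\rangle$, it locates the rightmost $\ell$-entry at some position $i$, and then either applies the inductive hypothesis to the prefix $\langle j_0,\dots,j_{i-1}\rangle$ on $[H]^{i+1}$, or finds a witness for that prefix and argues that past a suitable threshold every $(n-i-1)$-tuple in $H$ must contain a $g$-large gap. Your argument is instead a single direct step with no induction: you isolate the all-zero consecutive $n$-blocks (``clusters''), and the cluster-splicing construction manufactures the forbidden color $\sigma^*$ from any infinite supply of clusters in one shot. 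The inductive proof is more modular and mirrors the way the paper later handles more refined codings via largeness graphs; your direct argument is shorter, avoids the induction bookkeeping entirely, and makes the combinatorial reason for the bound $2^{n-1}$ (one bit per consecutive gap, realizable by splicing small blocks across large jumps) transparent in a single picture.
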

\begin{proof}
  We prove this by induction over $n \geq 1$. Then case $n = 1$
  vacuously holds, since $f_1 : \omega \to \{\langle\rangle\}$ has no
  infinite $f_1$-thin set. Furthermore, assume without loss of
  generality that $g$ is increasing.  Given
  $x_0 < x_1 < \dots < x_{n-1}$, let
$$
f_n(x_0, \dots, x_{n-1}) = \langle gap(x_0, x_1), gap(x_1, x_2),
\dots, gap(x_{n-2}, x_{n-1})\rangle
$$
where $gap(a,b) = \ell$ if $[a,b]$ is $g$-large, and $gap(a,b) = s$
otherwise.  Let $H$ be an infinite $f_n$-thin set, say for
color~$\langle j_0, \dots, j_{n-2} \rangle$. We have several cases.

Case 1: $H$ is $f_n$-thin for color $\langle s, s, \dots, s
\rangle$. Then for every $x_0 < \dots < x_{n-1} \in H$,
$[x_0, x_{n-1}]$ is $g$-large since $g$ is increasing. Then the
function which to $i$ associates the $(i+n)$th element of $H$
dominates~$g$.

Case 2: $H$ is $f_n$-thin for color
$\langle j_0, \dots, j_{i-1}, \ell, s, s, s, \dots, s \rangle$.  Case
2.1: $H$ is $f_{i+1}$-thin for color
$\langle j_0, \dots, j_{i-1} \rangle$ (where
$\langle j_0, \dots, j_{i-1} \rangle = \langle \rangle$ if $i =
0$). Then by induction hypothesis, $H$ computes a function
dominating~$g$.  Case 2.2: There is some
$x_0 < x_1 < \dots < x_i \in H$ such that
$f_{i+1}(x_0, \dots, x_i) = \langle j_0, \dots, j_{i-1} \rangle$. Let
$t > x_i$ be such that $gap(x_i, t) = \ell$. We claim that for every
tuple $x_{i+1} < \dots < x_{n-1} \in H - \{0, \dots, t\}$,
$[x_{i+1}, x_{n-1}]$ is $g$-large. Indeed, since $x_{i+1} > t$, then
$gap(x_i, x_{i+1}) = \ell$, and by choice of $i$, $H$ is $f_n$-thin
for color~$\langle j_0, \dots, j_{i-1}, \ell, s, \dots, s \rangle$. So
all the intervals cannot be small after $x_{i+1}$, and since $g$ is
increasing, $[x_{i+1}, x_{n-1}]$ is $g$-large. The function which to
$u$ associates the $(u+n-i)$th element of $H - \{0, \dots, t\}$
dominates~$g$.  This completes the proof of
Theorem~\ref{thm:thin-set-dominating-arbitrary-g}.
\end{proof}

\begin{corollary}[Dorais et al.~\cite{Dorais2016uniform}]
  For every $n \geq 2$ and $k \geq 1$, there is a computable instance
  of $\rt^{n+k}_{<\infty, 2^{n-1}}$ such that every solution
  computes $\emptyset^{(k)}$.
\end{corollary}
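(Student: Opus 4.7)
The plan is to combine Theorem \ref{thm:thin-set-dominating-arbitrary-g} with an iterated application of Shoenfield's limit lemma, following the template of the $\Leftarrow$ direction of Theorem \ref{thm:bridge-strong-to-non-strong}. Since $\emptyset^{(k)}$ is hyperarithmetic, Theorem \ref{modulus} supplies an $\emptyset^{(k)}$-computable modulus $\mu$ for it. Feeding $g = \mu$ into Theorem \ref{thm:thin-set-dominating-arbitrary-g} at tuple-arity $n$ yields an $\emptyset^{(k)}$-computable coloring $f : [\omega]^n \to 2^{n-1}$ such that every infinite $f$-thin set computes a function dominating $\mu$, and therefore computes $\emptyset^{(k)}$.

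Next I would convert $f$ into a computable coloring of arity $n+k$ by iterating Shoenfield's limit lemma $k$ times. Each application trades one level of oracle complexity for one additional argument, exactly as in the Shoenfield step used inside Theorem \ref{thm:bridge-strong-to-non-strong}; applied $k$ times, this turns the $\emptyset^{(k)}$-computable function $f$ into a computable function with $k$ extra arguments. This produces a computable $F : [\omega]^{n+k} \to 2^{n-1}$ satisfying
$$
f(x_0, \dots, x_{n-1}) = \lim_{y_1}\lim_{y_2}\cdots\lim_{y_k} F(x_0, \dots, x_{n-1}, y_1, \dots, y_k)
$$
for every increasing $(x_0, \dots, x_{n-1}) \in [\omega]^n$; one restricts $F$ to increasing $(n+k)$-tuples by identifying $(x_0, \dots, x_{n-1})$ with the first $n$ coordinates and $(y_1, \dots, y_k)$ with the last $k$.

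The verification is that $F$ is the desired computable instance. Given any infinite $H$ with $|F[H]^{n+k}| \le 2^{n-1}-1$, I would argue $f[H]^n \subseteq F[H]^{n+k}$. Fix $\vec{x} \in [H]^n$. Since $H$ is infinite, one can pick $y_1 < \dots < y_k$ inside $H$, all above $\max \vec{x}$, chosen so large that each nested limit has stabilized; then $F(\vec{x}, y_1, \dots, y_k) = f(\vec{x}) \in F[H]^{n+k}$. Consequently $|f[H]^n| \le |F[H]^{n+k}| \le 2^{n-1}-1$, so $H$ is $f$-thin and by Theorem \ref{thm:thin-set-dominating-arbitrary-g} computes $\emptyset^{(k)}$.

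The only delicate point, and hence the main obstacle, is justifying that the witnesses $y_1, \dots, y_k$ can be chosen \emph{inside} $H$ rather than merely in $\omega$. This is handled by reverse induction on the nesting depth $j = k, k-1, \dots, 1$: stabilization of a limit in $\omega$ is equivalent to stabilization along any cofinal subset, in particular $H$, so stabilization propagates inward through all $k$ nested limits simultaneously, producing the required $(y_1, \dots, y_k) \in [H]^k$ with $F(\vec{x}, y_1, \dots, y_k) = f(\vec{x})$. This is precisely the $n$-ary, $k$-fold iteration of the generic-set argument in the $\Rightarrow$ direction of Theorem \ref{thm:bridge-strong-to-non-strong}, and it is the only place where the infinitude of $H$ is used.
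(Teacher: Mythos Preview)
Your proposal is correct and follows essentially the same route as the paper: take an $\emptyset^{(k)}$-computable modulus, apply Theorem~\ref{thm:thin-set-dominating-arbitrary-g} at arity $n$, then iterate Shoenfield's limit lemma $k$ times to obtain a computable $(n+k)$-ary coloring whose thin sets are thin for the original $f$. Your treatment of the final step---choosing the limit witnesses $y_1,\dots,y_k$ inside $H$ so that $f[H]^n\subseteq F[H]^{n+k}$---is in fact more explicit than the paper's, which simply asserts ``every infinite $h$-thin set is $f$-thin'' without spelling out the stabilization argument.
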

\begin{proof}
  Let $g : \omega \to \omega$ be a $\emptyset^{(k)}$-computable
  modulus of $\emptyset^{(k)}$.  By
  Theorem~\ref{thm:thin-set-dominating-arbitrary-g}, there is a
  $\emptyset^{(k)}$-computable function $f = [\omega]^n \to 2^{n-1}$
  such that every infinite $f$-thin set computes a function dominating
  $g$, hence computes $\emptyset^{(k)}$.  By Schoenfield's limit
  lemma, there is a computable function
  $h : [\omega]^{n+k} \to 2^{n-1}$ such that for every
  $x_0 < \dots < x_{n-1} \in \omega$,
$$
f(x_0, \dots, x_{n-1}) = \lim_{x_n} \dots \lim_{x_{n+k-1}} h(x_0,
\dots, x_{n+k-1})
$$
Every infinite $h$-thin set is $f$-thin, and therefore computes
$\emptyset^{(k)}$.
\end{proof}

One can wonder about the optimality of
Theorem~\ref{thm:thin-set-dominating-arbitrary-g}.  In particular, for
$n = 3$, given a function $g : \omega \to \omega$, is there a function
$f : [\omega]^3 \to 5$ such that every infinite $f$-thin set computes
a function dominating~$g$?

When considering the function constructed in the proof of
Theorem~\ref{thm:thin-set-dominating-arbitrary-g}, there seems at
first sight to be some degree of freedom. In particular, given some
$a < b < c$, whenever $[a, b]$ and $[b, c]$ are both $g$-small,
$[a,c]$ can be either $g$-large or $g$-small. A candidate function
would therefore be
$$
f(a, b, c) = \langle gap(a,b), gap(b,c), gap(a,c) \rangle
$$
However, as we shall see in Section~\ref{sect:gap-principle}, the last
component $gap(a,c)$ is of no help to compute fast-growing functions,
in the sense if $g$ is not dominated by any computable function, then
one can avoid the color $\langle s, s, \ell \rangle$ without
dominating~$g$.

\begin{definition}
  A function $f$ is \emph{$X$-hyperimmune} if it is not dominated by
  any $X$-computable function.
\end{definition}

\begin{lemma}
  If $g$ is a hyperimmune function, then there is an infinite set $H$
  such that $\langle s, s, \ell \rangle \not \in f[H]^3$ and such that
  $g$ is $H$-hyperimmune.
\end{lemma}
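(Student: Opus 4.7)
We build $H$ via a Mathias-style forcing whose conditions simultaneously encode the combinatorial block-structure needed to avoid $\langle s, s, \ell \rangle$ and the preservation of $g$'s hyperimmunity. A condition is a pair $(F, X)$ where $F$ is a finite set containing no bad triple for $f$, $X$ is an infinite set also containing no bad triple, and $\min X \geq g(\max F)$. This last inequality is the main combinatorial device: it forces every element of $X$ to lie at a $g$-large gap from every element of $F$, so any triple that spans both $F$ and $X$ has $gap = \ell$ in one of its first two coordinates and therefore cannot match $\langle s, s, \ell \rangle$. Under extensions $(F', X') \leq (F, X)$ with $F \subseteq F' \subseteq F \cup X$, $X' \subseteq X$, and $\min X' \geq g(\max F')$, the property ``avoids the bad pattern'' is maintained: triples split between old-$F$ and $X$-elements are safe automatically, triples within $F'$ are controlled by the condition, and triples within $X'$ inherit safety from $X$ since subset-closure preserves the absence of bad triples.

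Density for ``$H$ is infinite'' is immediate, since any condition extends by adding a single new element drawn from $X$ and then truncating $X$ past the required $g$-gap. The heart of the argument is the density of hyperimmunity preservation: for every Turing functional $\Phi_e$, the set of conditions $(F, X)$ forcing either $\Phi_e^H$ to be partial, or $\Phi_e^H(n) < g(n)$ for some new $n > |F|$, is dense. Given $(F, X)$, one splits into two cases. In the favorable case, there exist some $n > |F|$ and a finite valid $F' \subseteq X$ with $\Phi_e^{F \cup F'}(n) \downarrow < g(n)$, and we extend directly to $(F \cup F', X \setminus [0, g(\max(F \cup F'))])$, forcing the desired inequality. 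In the unfavorable case, for every large $n$ and every finite valid $F' \subseteq X$ one has $\Phi_e^{F \cup F'}(n) \uparrow$ or $\Phi_e^{F \cup F'}(n) \geq g(n)$, and the goal is to extract from this rigidity a total \emph{computable} function dominating $g$, contradicting the hyperimmunity of $g$ and showing the unfavorable case is vacuous. Iterating this density lemma across all $e$ produces a sufficiently generic $H$ with $g$ being $H$-hyperimmune.

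The main obstacle is precisely this extraction in the unfavorable case: the reservoir $X$ is a priori $g$-computable at best, so a naive extraction yields only a $g$-computable function dominating $g$, which is useless (since $g$ trivially dominates itself). The standard resolution, familiar from preservation-of-hyperimmunity arguments in Patey~\cite{Patey2015Iterative}, is to refine the forcing so that reservoirs live inside a computable tree of finite valid configurations---using compactness of the block-structured extensions---or to apply the hyperimmune-free basis theorem to a $\Pi^{0, g}_1$ class encoding validity, so that every $H$-computable total function is majorized by a genuinely computable one. Either route converts the rigid behavior of $\Phi_e$ over valid extensions into a computable bound on $g$, contradicting its hyperimmunity, and completes the density argument.
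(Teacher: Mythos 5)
Your forcing setup is sensible, but the proof has a genuine gap exactly at the point you yourself call the heart of the argument: the unfavorable case of the density lemma, where a computable function dominating $g$ must be extracted. Neither of your proposed rescues works. There is no ``computable tree of finite valid configurations'': validity of a block depends on which intervals are $g$-small, and for an arbitrary hyperimmune $g$ this is not a decidable (nor c.e.) property, so compactness over such a tree is unavailable. The hyperimmune-free basis theorem can only be applied relative to $g$, since the class encoding validity is at best $\Pi^{0,g}_1$; this yields an $H$ whose $H\oplus g$-computable functions are dominated by $g$-computable functions, which says nothing about whether some $H$-computable function dominates $g$ (indeed $g$ dominates itself). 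Moreover, your requirement that the reservoir $X$ itself contain no bad triple makes matters worse: in the only nontrivial case, where no computable infinite $g$-transitive set exists (if one did, it would already witness the lemma), every such reservoir is noncomputable, so any function read off from searches through $X$ is merely $X$-computable, exactly the uselessness you point out.

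The paper's proof (see Theorem~\ref{thm:gap-preserve-hyperimmune-and-non-dnc}) resolves this with two moves your plan lacks. First, reservoirs are arbitrary infinite \emph{computable} sets, required only to have all $F$-to-$X$ pairs $g$-large, with no transitivity demand on $X$; this is legitimate after the reduction ``if a computable infinite $g$-transitive set exists, we are done.'' Second, since one cannot computably tell which finite blocks of $X$ are $g$-small, the density argument searches simultaneously for several blocks $E_0 < \dots < E_{k-1} \subseteq X$ with $\Phi_e^{F\cup E_j}(n)$ convergent and also for convergence on $H = \{\max E_j : j < k\}$: if some $E_j$ is $g$-small, then $F\cup E_j$ is a legal extension; if all are $g$-large, then all gaps in $H$ are $g$-large and $F\cup H$ is legal. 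Either way a legal extension realizes a value bounded by $h(n) = \max_j\{\Phi_e^{F\cup E_j}(n), \Phi_e^{F\cup H}(n)\}$, and because $X$ is computable and the search never consults $g$, the function $h$ is genuinely computable; hyperimmunity of $g$ then provides $n$ with $h(n) < g(n)$, while the partial case forces divergence outright. This two-pronged witness, which sidesteps the undecidability of $g$-smallness rather than assuming it away, is the missing idea that your proposal defers to tools that do not apply.
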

\begin{proof}
  See Theorem~\ref{thm:gap-preserve-hyperimmune-and-non-dnc}.
\end{proof}

\begin{figure}[htbp]
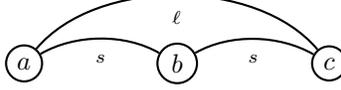

$$
\psmatrix[colsep=1.5cm,rowsep=1.5cm,mnode=circle] a&b&c
\everypsbox{\scriptstyle} \ncarc[arcangle=30]{1,1}{1,2}_{s}
\ncarc[arcangle=30]{1,2}{1,3}_{s}
\ncarc[arcangle=50]{1,1}{1,3}_{\ell} \endpsmatrix
$$
\caption{The following pattern can be avoided thanks to the $\gap$
principle studied in Section~\ref{sect:gap-principle}.}
\end{figure}

Actually, we shall see that in Section~\ref{sect:weakness-ts} that
Theorem~\ref{thm:thin-set-dominating-arbitrary-g} is optimal in the
sense that for every function $g$ which is hyperimmune relative to
every arithmetical set, for every $n \geq 1$ and every instance of
$\rt^n_{2^{n-1}+1, 2^{n-1}}$, there is a solution which does not
compute a function dominating~$g$. We can however obtain better
results in the case of left-c.e.\ functions.

\begin{definition}
  A function $g : \omega \to \omega$ is \emph{left-c.e.}\ if there is
  an uniformly computable sequence of functions $g_0, g_1, \dots$ such
  that for every $x$, $g_0(x), g_1(x), \dots$ is a non-decreasing
  sequence converging to $g(x)$.
\end{definition}

We want our approximations $g_0, g_1, \dots$ to $g$ to have some nice
properties, as enumerated by the next lemma.

\begin{lemma}\label{ceapproximations}
  Let $g_0, g_1 \ldots$ be an uniformly computable sequence of
  functions such that $\lim_s g_s(x) = g(x)$.  Then there is a
  uniformly computable sequence of functions of
  $\hat{g}_0, \hat{g}_1, \ldots$ such that
  \begin{enumerate}
  \item For all $x$, $\lim_s \hat{g}_s(x)$ exists.
  \item $\lim_s \hat{g}_s(x) \geq g(x)$. So $\lim_s \hat{g}_s(x)$
    dominates $g$.
  \item $\lim_s \hat{g}_s(x)$ is increasing.
  \item Each $\hat{g}_s$ is increasing.
  \item For all $x$, the sequence
    $\hat{g}_0(x), \hat{g}_1(x), \hat{g}_2(x), \ldots$ is
    non-decreasing.
    \item If $a\leq x$, $\hat{g}_s(a) <x+1$, and
    $\hat{g}_{s+1}(a) \geq x+1$ then $\hat{g}_{s+1}(x+1) \geq s+1 \geq x+1$.
  \end{enumerate}
\end{lemma}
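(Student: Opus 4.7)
The plan is to construct the sequence $\hat g_s$ by a staged recursive definition, directly building each required property into the construction.

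First, without loss of generality, I would replace $g_s(y)$ by the running maximum $\max_{t\leq s} g_t(y)$; since any convergent integer sequence is eventually constant this preserves $g(y)$ in the limit and makes the approximation non-decreasing in $s$. Set $\hat g_0(x):=x$, and at stage $s+1$ define $\hat g_{s+1}(x)$ by recursion on $x$ via
\[
\hat g_{s+1}(x) := \max\bigl(\hat g_s(x),\ g_{s+1}(x),\ \hat g_{s+1}(x-1)+1,\ p_{s+1}(x)\bigr),
\]
where the third summand is omitted for $x=0$, and the propagation term $p_{s+1}(x)$ equals $s+1$ whenever some $a<x$ satisfies $\hat g_s(a)<x\leq\hat g_{s+1}(a)$ and is $0$ otherwise. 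The recursion is effective because it uses only $\hat g_s$ and the already-computed values $\hat g_{s+1}(y)$ for $y<x$.

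Properties (2), (4), (5) are immediate from the construction, and (3) follows from (4) in the limit. For (1), argue by induction on $x$: the three ``regular'' summands are bounded in $s$ by monotonicity together with the stabilization of $g_s(x)$, while the propagation term fires on at most $x$ many stages, since for each fixed $a<x$ the non-decreasing sequence $\hat g_s(a)$ can cross the fixed threshold $x$ at most once. The first inequality of (6), $\hat g_{s+1}(x+1)\geq s+1$, is exactly what $p_{s+1}(x+1)$ delivers under the hypothesis of~(6).

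The delicate remaining point is the second inequality of~(6), $s+1\geq x+1$. To enforce this, I would further throttle per-stage growth by refusing to let $\hat g_{s+1}(a)$ jump across any threshold $v$ at a stage $s+1<v$, deferring the excess growth to later stages via the $\hat g_s(x)$ summand in the next round of the recursion. The throttle is loose enough that $\hat g$ still dominates $g$ in the limit, and tight enough that every cross-threshold jump at a position $a\leq x$ across $x+1$ requires $s+1\geq x+1$. The main obstacle is calibrating this throttle to be compatible with strict increase~(4): a naive uniform cap $\hat g_s(x)\leq s$ collapses $\hat g_s$ to the identity, because strict increase plus such a cap would force $\hat g_s(x)=x$ for $x\leq s$. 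The resolution is to cap individual \emph{increments} rather than absolute values in a stage-dependent manner and to rely on the propagation term $p_{s+1}$ to push the deferred growth upward through higher positions, so that both halves of~(6) hold without disturbing (2)--(5).
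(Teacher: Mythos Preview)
Your core construction is essentially the paper's: a maximum over $\hat g_s(x)$, $g_{s+1}(x)$, $\hat g_{s+1}(x-1)+1$, and a propagation term that injects $s+1$ whenever some $a\leq x$ crosses the threshold $x+1$ between stages $s$ and $s+1$. The paper phrases the last summand as ``and also the $s$'s but only if there exists $a\leq x$ such that $\hat g_s(a)<x+1$ and $\hat g_{s+1}(a)\geq x+1$'', which is exactly your $p_{s+1}$. Your arguments for (1)--(5) and for $\hat g_{s+1}(x+1)\geq s+1$ are fine and match the paper's one-line justifications; the finiteness argument for~(1) (each $a$ crosses the fixed threshold at most once) is the same induction the paper invokes.

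Where you diverge is the second inequality $s+1\geq x+1$ in~(6). You are right that the plain construction does not deliver it, and your instinct that a naive cap would collapse $\hat g_s$ to the identity is correct. But your proposed ``throttle'' is never actually specified: you describe a goal (defer growth so that threshold crossings only occur at late enough stages) and an obstacle (compatibility with strict increase), and then assert a resolution without giving one. As written, this part is a gap.

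That said, the paper's own proof does not address $s+1\geq x+1$ either; the construction there admits the same counterexamples (take $g_1(0)$ huge). In the subsequent applications only the first inequality $\hat g_{s+1}(x+1)\geq s+1$ is ever used (see the justification of property~(d) of largeness graphs, where one needs $g_t(x_{i+1})\geq t$, while $t>x_j$ comes from the hypotheses, not from~(6)). So the second inequality in~(6) is best read as an overclaim in the statement rather than something you are expected to manufacture; you should simply note that your construction yields (1)--(5) and $\hat g_{s+1}(x+1)\geq s+1$, and drop the throttling discussion.
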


\begin{proof}
  Let $\hat{g}_0(0)= g_0(0)$; $\hat{g}_0(x+1)$ be the max of
  $g_0(x+1)$ and $\hat{g}_0(x)+1$ (so $\hat{g}_0$ is increasing); and
  $\hat{g}_{s+1}(0)$ be the max of $\hat{g}_{s}(0)$ and $g_{s+1}(0)$
  (so the sequence $\hat{g}_0(0), \hat{g}_1(0), \hat{g}_2(0), \ldots$
  is non-decreasing and $\lim_s \hat{g}_s(0)\geq g(0)$)).  Let
  $\hat{g}_{s+1}(x+1)$ be the max of $\hat{g}_{s+1}(x)+1$ (so
  $\hat{g}_{s+1}$ is increasing), $\hat{g}_{s}(x+1)$ (so the sequence
  $\hat{g}_{0}(x), \hat{g}_1(x), \hat{g}_2(x), \ldots$ is
  non-decreasing), $g_{s+1}(x+1)$ (so
  $\lim_s \hat{g}_s(x)\geq g(x+1)$),
  and also the $s$'s but only if there exists $a \leq x$ such that
  $\hat{g}_s(a) < x+1$ and $\hat{g}_{s+1}(a) \geq x+1$ (so last
  condition is meet).  By induction on $x$, $\lim_s \hat{g}_s(x)$
  exists.  Since each $g_s$ is increasing, so is $\lim_s \hat{g}_s(x)$.
\end{proof}

Since we are only interested in functions which dominate $g$ (see
Theorem~\ref{thm:rt354-domination-left-ce}), the above lemma says that
we can make the assumption all of our approximations have the
properties enumerated in the above lemma and that $g$ is increasing.
Also note that the resulting function $\lim_s \hat{g}_s(x)$ is
left-c.e.\ even in the case of an arbitrary $\Delta^0_2$ function
$g$. Here however we shall only consider only left-c.e.\ functions
$g$.


Since $g_s$ is computable, being $g_s$-large or $g_s$-small is
decidable, contrary to $g$-largeness and $g$-smallness.  If $[a, b]$
is $g$-large, then, for all $s$, since $g_s(a) \leq g(a)$, $[a,b]$ is
$g_s$-large.  The interesting feature of left-c.e.\ functions is that
the set of their small intervals is c.e.  Indeed, $[a, b]$ is
$g$-small iff, there is some $s \in \omega$ such that $g_s(a) > b$,
i.e.\ $[a,b]$ is $g_s$-small.  Therefore, this notion is interesting
to classify $g$-small sets.  The last condition of the above lemma and
the fact that $g_s$ is increasing imply that if $s$ is the first stage
where $[a,b]$ is $g_s$-small then, for all $y\geq b$,
$g_{g_s(y)}(a)>b$.  So $g$ grows sufficiently fast so that if $[a, b]$
is $g$-small and $[b, c]$ is $g$-large, then $[a, b]$ is $g_c$-small.

Before proving our main lower bound theorem, we need to introduce a
combinatorial tool, namely, \emph{largeness graphs}. They will be
useful to count the number of colors needed for our theorem.

Given a tuple $x_0 < x_1 < \dots < x_{n-1}$, we study the properties
of the graph whose vertices are $\{0, \dots, n-1\}$, and such that for
every $i < n-1$, there is an edge between $i$ and $i+1$ if
$[x_i, x_{i+1}]$ is $g$-large, and for every $i +1 < j < n$, there is
an edge between $i$ and $j$ if $[x_i, x_{i+1}]$ is
$g_{x_j}$-small. This yields the notion of a largeness graph.

\begin{definition}
  A \emph{largeness graph} of size $n$ is an undirected irreflexive
  graph $(V, E)$, with $V = \{0, \dots, n-1\}$, such that
  \begin{itemize}
  \item[(a)] If $\{i,i+1\} \in E$, then for every $j > i+1$,
    $\{i, j\} \not \in E$
  \item[(b)] If $i < j < n$, $\{i,i+1\} \not \in E$ and
    $\{j,j+1\} \in E$, then $\{i, j+1\} \in E$
  \item[(c)] If $i +1 < j < n - 1$ and $\{i,j\} \in E$, then
    $\{i, j+1\} \in E$
  \item[(d)] If $i+1 < j < k < n$ and $\{i, j\} \not \in E$ but
    $\{i,k\} \in E$, then $\{j-1, k\} \in E$
  \end{itemize}
\end{definition}

Property (a) reflects the fact that if $[x_i, x_{i+1}]$ is $g$-large,
then it is not $g_{x_j}$-small for any $j > i+1$. Property (b) says
that if $[x_j, x_{j+1}]$ is $g$-large, then any value larger than
$x_{j+1}$ will already witness the smallness of all the $g$-small
intervals before $x_j$.  Property (c) says that if $[x_i, x_{i+1}]$ is
$g_{x_j}$-small, then it will be $g_{x_k}$-small for every $k \geq
j$. Last, Property (d) says that if $[x_i,x_{i+1}]$ is
$g_{x_k}$-small, but $g_{x_j}$-large, then the interval
$[x_{j-1}, x_j]$ is $g_{x_k}$-small. Actually, by (a), we already know
that $[x_{j-1}, x_j]$ is $g$-small. What Property (d) adds is that
this smallness is witnessed by time $x_k$.  Since, for the first $t$
such that $g_t(x_i) \geq x_{i+1}$, $x_j < t \leq x_k$, the conditions
of Lemma~\ref{ceapproximations} implies that $g_t(x_{i+1}) \geq
t > x_{j}$. Since $t\leq x_k$ and the increasing properties of the
approximations, $g_{x_k}(x_{j-1}) \geq t > x_j$.


\begin{figure}[htbp]
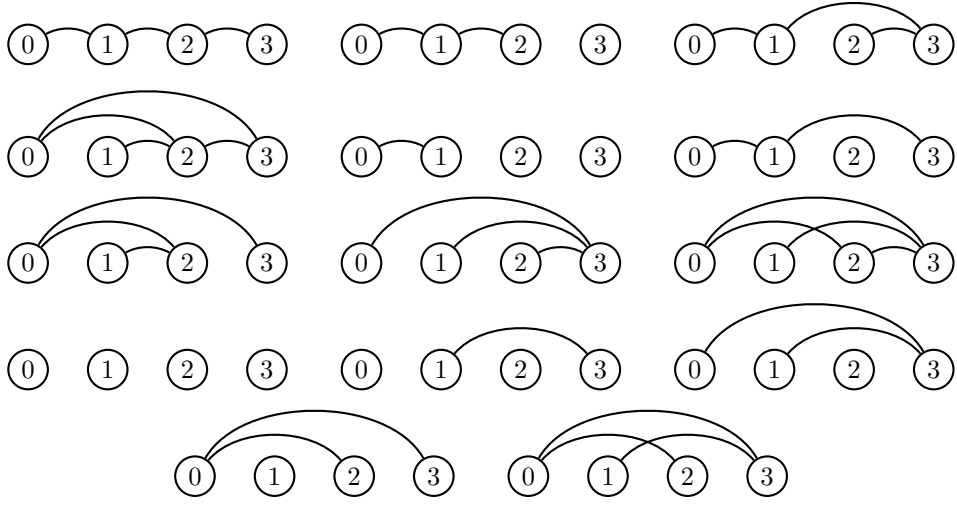

$$
\psmatrix[colsep=0.5cm,rowsep=1cm,mnode=circle] 0&1&2&3
\everypsbox{\scriptstyle} \ncarc[arcangle=30]{1,1}{1,2}
\ncarc[arcangle=30]{1,2}{1,3} \ncarc[arcangle=30]{1,3}{1,4}
\endpsmatrix \hspace{0.7cm}
\psmatrix[colsep=0.5cm,rowsep=1cm,mnode=circle] 0&1&2&3
\everypsbox{\scriptstyle} \ncarc[arcangle=30]{1,1}{1,2}
\ncarc[arcangle=30]{1,2}{1,3} \endpsmatrix \hspace{0.7cm}
\psmatrix[colsep=0.5cm,rowsep=1cm,mnode=circle] 0&1&2&3
\everypsbox{\scriptstyle} \ncarc[arcangle=30]{1,1}{1,2}
\ncarc[arcangle=30]{1,3}{1,4} \ncarc[arcangle=50]{1,2}{1,4}
\endpsmatrix
$$
\vspace{0.5cm}
$$
\psmatrix[colsep=0.5cm,rowsep=1cm,mnode=circle] 0&1&2&3
\everypsbox{\scriptstyle} \ncarc[arcangle=30]{1,2}{1,3}
\ncarc[arcangle=30]{1,3}{1,4} \ncarc[arcangle=50]{1,1}{1,3}
\ncarc[arcangle=60]{1,1}{1,4} \endpsmatrix \hspace{0.7cm}
\psmatrix[colsep=0.5cm,rowsep=1cm,mnode=circle] 0&1&2&3
\everypsbox{\scriptstyle} \ncarc[arcangle=30]{1,1}{1,2} \endpsmatrix
\hspace{0.7cm} \psmatrix[colsep=0.5cm,rowsep=1cm,mnode=circle]
0&1&2&3 \everypsbox{\scriptstyle} \ncarc[arcangle=30]{1,1}{1,2}
\ncarc[arcangle=50]{1,2}{1,4} \endpsmatrix
$$
\vspace{0.5cm}
$$
\psmatrix[colsep=0.5cm,rowsep=1cm,mnode=circle] 0&1&2&3
\everypsbox{\scriptstyle} \ncarc[arcangle=30]{1,2}{1,3}
\ncarc[arcangle=50]{1,1}{1,3} \ncarc[arcangle=60]{1,1}{1,4}
\endpsmatrix \hspace{0.7cm}
\psmatrix[colsep=0.5cm,rowsep=1cm,mnode=circle] 0&1&2&3
\everypsbox{\scriptstyle} \ncarc[arcangle=30]{1,3}{1,4}
\ncarc[arcangle=50]{1,2}{1,4} \ncarc[arcangle=60]{1,1}{1,4}
\endpsmatrix \hspace{0.7cm}
\psmatrix[colsep=0.5cm,rowsep=1cm,mnode=circle] 0&1&2&3
\everypsbox{\scriptstyle} \ncarc[arcangle=30]{1,3}{1,4}
\ncarc[arcangle=50]{1,1}{1,3} \ncarc[arcangle=50]{1,2}{1,4}
\ncarc[arcangle=60]{1,1}{1,4} \endpsmatrix
$$
\vspace{0.5cm}
$$
\psmatrix[colsep=0.5cm,rowsep=1cm,mnode=circle] 0&1&2&3
\everypsbox{\scriptstyle} \endpsmatrix \hspace{0.7cm}
\psmatrix[colsep=0.5cm,rowsep=1cm,mnode=circle] 0&1&2&3
\everypsbox{\scriptstyle} \ncarc[arcangle=50]{1,2}{1,4} \endpsmatrix
\hspace{0.7cm} \psmatrix[colsep=0.5cm,rowsep=1cm,mnode=circle]
0&1&2&3 \everypsbox{\scriptstyle} \ncarc[arcangle=50]{1,2}{1,4}
\ncarc[arcangle=60]{1,1}{1,4} \endpsmatrix
$$
\vspace{0.5cm}
$$
\psmatrix[colsep=0.5cm,rowsep=1cm,mnode=circle] 0&1&2&3
\everypsbox{\scriptstyle} \ncarc[arcangle=50]{1,1}{1,3}
\ncarc[arcangle=60]{1,1}{1,4} \endpsmatrix \hspace{0.7cm}
\psmatrix[colsep=0.5cm,rowsep=1cm,mnode=circle] 0&1&2&3
\everypsbox{\scriptstyle} \ncarc[arcangle=50]{1,1}{1,3}
\ncarc[arcangle=50]{1,2}{1,4} \ncarc[arcangle=60]{1,1}{1,4}
\endpsmatrix
$$
\caption{List of the 14 largeness graphs of size 4. The last 5
graphs are the packed largeness graphs of size 4.}
\end{figure}

\begin{definition}
  A largeness graph $\Gcal = (\{0, \dots, n-1\}, E)$ is \emph{packed}
  if for every $i < n-2$, $\{i,i+1\} \not \in E$.
\end{definition}

\begin{definition}
  Let $\Gcal_0 = (\{0, \dots, n-1\}, E_0)$ and
  $\Gcal_1 = (\{0, \dots, n-1\}, E_1)$ be two largeness graphs of size
  $n$. We define the equivalence relation $\Gcal_0 \sim \Gcal_1$ to
  hold if for every $i +1 < j < n$, $\{i,j\} \in E_0$ if and only if
  $\{i,j\} \in E_1$.
\end{definition}

In other words, $\Gcal_0 \sim \Gcal_1$ if and only if only the edges
of the form $\{i,i+1\}$ can vary.

\begin{lemma}\label{lem:equivalence-largeness-to-packed}
  Every largeness graph of size $n \geq 1$ is equivalent to a packed
  largeness graph.
\end{lemma}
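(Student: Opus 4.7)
The plan is to produce a packed representative by simply \emph{deleting} all short edges. Concretely, given a largeness graph $\Gcal = (\{0, \dots, n-1\}, E)$, I define $\Gcal' = (\{0, \dots, n-1\}, E')$ with
$$ E' = \{\, \{i,j\} \in E : j > i+1 \,\}. $$
By construction $E$ and $E'$ agree on every edge $\{i,j\}$ with $i+1 < j$, so $\Gcal' \sim \Gcal$, and $\Gcal'$ contains no short edges at all, so in particular satisfies the packed condition $\{i,i+1\} \notin E'$ for every $i < n-2$. The only thing left is to verify that $\Gcal'$ is still a largeness graph, i.e. that it inherits properties (a)--(d).

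Properties (a) and (b) are trivially satisfied because their hypotheses require a short edge of $\Gcal'$ to exist, and $\Gcal'$ has none. For (c) and (d), I would observe that both hypotheses and conclusions involve only long edges. Indeed, in (c) one has $i+1 < j < n-1$, so the conclusion edge $\{i, j+1\}$ satisfies $i+1 < j+1$ and is long. In (d) one has $i+1 < j < k < n$, so the conclusion edge $\{j-1, k\}$ satisfies $(j-1)+1 = j < k$ and is also long. Since $E$ and $E'$ coincide on long edges, the truth of (c) and (d) in $\Gcal$ immediately transfers to $\Gcal'$.

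There is no real obstacle here: the argument is essentially a bookkeeping check, and the only observation worth isolating is that in a largeness graph the conclusions of properties (c) and (d) produce edges that are automatically long, so deleting all short edges preserves the axioms. The same construction also handles the degenerate cases $n = 1, 2$ vacuously, since no long edges are possible when $n \leq 2$ and the packed condition is vacuous when $n \leq 2$.
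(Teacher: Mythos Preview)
Your proof is correct and matches the paper's approach exactly: the paper also forms $E_1 = E_0 \setminus \{\{i,i+1\} : i < n-1\}$, notes that (a) and (b) become vacuous, and that (c) and (d) are inherited from $\Gcal_0$. Your write-up is slightly more explicit in checking that the conclusion edges of (c) and (d) are long, which is a helpful clarification but not a different argument.
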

\begin{proof}
  Let $\Gcal_0 = (\{0, \dots, n-1\}, E_0)$ be a largeness graph of
  size $n$.  Let $E_1 = E_0 \setminus \{ \{i,i+1\} : i < n-1 \}$ and
  $\Gcal_1 = (\{0, \dots, n-1\}, E_1)$. We claim that $\Gcal_1$ is a
  largeness graph by checking properties (a-d). Properties (a) and (b)
  are vacuously true.  Properties (c) and (d) are inherited from
  $\Gcal_0$.  By construction, $\Gcal_0 \sim \Gcal_1$ and $\Gcal_1$ is
  packed.
\end{proof}

\begin{lemma}\label{lem:normal-largeness-graph-progress}
  Let $\Gcal_0 = (\{0, \dots, n-1\}, E_0)$ be a largeness graph of
  size $n$.  Let $\ell < n-2$ be minimal (if it exists) such that
  $\{\ell,\ell+1\} \not \in E_0$ and $\{\ell,n-1\} \not \in E_0$. Then
  the graph $\Gcal_1 = (\{0, \dots, n-1\}, E_0 \cup \{\ell,\ell+1\})$
  is a largeness graph such that $\Gcal_0 \sim \Gcal_1$.
\end{lemma}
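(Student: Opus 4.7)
The plan is to verify directly that $\Gcal_1$ still satisfies properties (a)--(d), since $\Gcal_0 \sim \Gcal_1$ is immediate from the definition of $\sim$ (only the edge $\{\ell,\ell+1\}$, which is of form $\{i,i+1\}$, is added, and such edges are ignored by $\sim$). Because $E_1$ differs from $E_0$ only by one edge, most of the verification reduces to showing that this new edge does not create a witness to a failure of (a)--(d), while inherited instances follow from $\Gcal_0$ being a largeness graph.

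For (a), I need to show that for every $j > \ell+1$, $\{\ell,j\} \notin E_1$. The hypothesis gives $\{\ell,n-1\} \notin E_0$, and I would then rule out intermediate $j$ with $\ell+1 < j < n-1$ by the contrapositive of property (c) for $\Gcal_0$: iterating (c) starting from such a $\{\ell,j\}\in E_0$ would force $\{\ell,n-1\}\in E_0$, contradicting the hypothesis. Other instances of (a) come from edges already in $E_0$; since no added $\{i,j\}$ with $j > i+1$ appears in $E_1\setminus E_0$, these carry over automatically.

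The main content is property (b), where the added edge $\{\ell,\ell+1\}$ plays the role of $\{j,j+1\}$. Fix $i<\ell$ with $\{i,i+1\}\notin E_1$; then $\{i,i+1\}\notin E_0$, and by the minimality of $\ell$, we must have $\{i,n-1\}\in E_0$. The goal is to deduce $\{i,\ell+1\}\in E_0$, and here is where I apply property (d) of $\Gcal_0$ with the triple $(i,\ell+1,n-1)$: if $\{i,\ell+1\}\notin E_0$, then (d) would yield $\{\ell,n-1\}\in E_0$, again contradicting the hypothesis on $\ell$. So $\{i,\ell+1\}\in E_0\subseteq E_1$, as required. Cases in which $\{j,j+1\}$ is an edge already present in $E_0$ are handled by (b) applied to $\Gcal_0$.

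Properties (c) and (d) are the easy ones: the unique added edge $\{\ell,\ell+1\}$ has $k-i=1$, so it can never appear as the $\{i,j\}$ in the hypothesis of (c) (which requires $i+1<j$) nor as the $\{i,k\}$ in the hypothesis of (d) (which requires $i+1<k$); the required conclusions are therefore inherited from $\Gcal_0$. I expect the main obstacle to be the argument for (b) described above, which is essentially the only place where both the minimality clause in the choice of $\ell$ and property (d) of $\Gcal_0$ must be invoked simultaneously; the rest is bookkeeping.
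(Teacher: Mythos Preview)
Your proof is correct and follows essentially the same approach as the paper's own proof: you verify properties (a)--(d) directly, using property (c) of $\Gcal_0$ to handle (a), the minimality of $\ell$ together with property (d) of $\Gcal_0$ (applied to the triple $(i,\ell+1,n-1)$) to handle the new instance of (b), and noting that (c) and (d) are inherited because the added edge is between consecutive vertices. Your treatment is in fact slightly more explicit than the paper's in justifying why the inherited cases go through.
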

\begin{proof}
  We check that Property (a-d) are satisfied for $\Gcal_1$.

  (a) We need to check that for every $j > \ell+1$,
  $\{\ell, j\} \not \in E_0 \cup \{\ell,\ell+1\})$. If
  $\{\ell, j\} \in E_0$ for some $j > \ell+1$, then by property (c) of
  $\Gcal_0$, $\{\ell, n-1\} \in E_0$, contradicting our hypothesis.

  (b) We need to check that if $i < \ell$ and
  $\{i, i+1\} \not \in E_0 \cup \{\ell,\ell+1\})$, then
  $\{i, \ell+1\} \in E_0 \cup \{\ell,\ell+1\})$.  By minimality of
  $\ell$, $\{i, n-1\} \in E_0$.  If
  $\{i, \ell+1\} \not \in E_0 \cup \{\ell,\ell+1\})$, then by property
  (d) of $\Gcal_0$, $\{\ell, n-1\} \in E_0$, contradicting our
  hypothesis.

  (c-d) are inherited from properties (c-d) of $\Gcal_0$.
\end{proof}

\begin{definition}
  A largeness graph $\Gcal = (\{0, \dots, n-1\}, E)$ of size
  $n \geq 2$ is \emph{normal} if $\{n-2,n-1\} \in E$.
\end{definition}

\begin{lemma}\label{lem:equivalence-normal-largeness-graph}
  Every largeness graph of size $n \geq 2$ is equivalent to a normal
  largeness graph.
\end{lemma}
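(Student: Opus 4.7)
The plan is to iteratively apply Lemma \ref{lem:normal-largeness-graph-progress} to fill in certain short edges, and then add $\{n-2, n-1\}$ as a final step if it is still missing. Given a largeness graph $\Gcal = (\{0, \dots, n-1\}, E)$, I would repeatedly select the minimal $\ell < n-2$ (when one exists) such that $\{\ell, \ell+1\} \not\in E$ and $\{\ell, n-1\} \not\in E$, and add $\{\ell, \ell+1\}$ to $E$. Lemma \ref{lem:normal-largeness-graph-progress} guarantees that each such step produces an equivalent largeness graph. Since each iteration strictly adds one of the at most $n-2$ edges of the form $\{\ell, \ell+1\}$ with $\ell < n-2$, the process terminates after finitely many steps.

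At termination I obtain an equivalent largeness graph $\Gcal' = (\{0, \dots, n-1\}, E')$ with the key feature that for every $\ell < n-2$, either $\{\ell, \ell+1\} \in E'$ or $\{\ell, n-1\} \in E'$. If $\{n-2, n-1\} \in E'$ then $\Gcal'$ is already normal, and we are done. Otherwise I would define $E'' = E' \cup \{\{n-2, n-1\}\}$ and verify that $\Gcal'' = (\{0, \dots, n-1\}, E'')$ is still a largeness graph. Equivalence to $\Gcal$ is immediate since only a short edge is added, and normality holds by construction.

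The verification of the four properties for $E''$ is where the termination condition pays off. Properties (c) and (d) quantify only over edges $\{i, j\}$ with $j > i+1$, and such edges are identical in $E''$ and $E'$, so both properties are inherited. Property (a) is vacuous for the new edge $\{n-2, n-1\}$ since there is no $j > n-1$ in $\{0, \dots, n-1\}$, and it is inherited from $\Gcal'$ for every other short edge. The only delicate check is property (b) when the new edge $\{n-2, n-1\}$ plays the role of $\{j, j+1\}$: one must show that for every $i < n-2$ with $\{i, i+1\} \not\in E''$, the edge $\{i, n-1\}$ lies in $E''$. But $\{i, i+1\} \not\in E''$ is equivalent to $\{i, i+1\} \not\in E'$, so this is exactly the characterizing property of $E'$ obtained at termination. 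I expect the main obstacle is purely bookkeeping: ensuring that the greedy additions of short edges do not create a hidden violation of property (b) elsewhere, but the minimality in the choice of $\ell$ and the hypotheses of Lemma \ref{lem:normal-largeness-graph-progress} are precisely tailored to prevent this.
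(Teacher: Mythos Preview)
Your proposal is correct and follows essentially the same approach as the paper: iterate Lemma~\ref{lem:normal-largeness-graph-progress} until no eligible $\ell$ remains, then adjoin the edge $\{n-2,n-1\}$. The paper's proof is terser and leaves the verification that the final graph is a largeness graph implicit, whereas you spell it out, but the argument is the same.
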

\begin{proof}
  Fix a largeness graph $\Gcal_0 = (\{0, \dots, n-1\}, E_0)$.  By
  iterating Lemma~\ref{lem:normal-largeness-graph-progress}, there is
  a graph $\Gcal_1 = (\{0, \dots, n-1\}, E_1)$ equivalent to $\Gcal_0$
  such that for every $\ell < n-2$ such that
  $\{\ell,\ell+1\} \not \in E_1$, then $\{\ell,n-1\} \in E_1$. The
  graph $\Gcal_2 = (\{0, \dots, n-1\}, E_1 \cup \{n-2,n-1\})$ is a
  normal largeness graph equivalent to $\Gcal_0$.
\end{proof}

\begin{figure}[htbp]
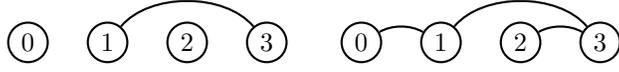

$$
\psmatrix[colsep=0.5cm,rowsep=1cm,mnode=circle] 0&1&2&3
\everypsbox{\scriptstyle} \ncarc[arcangle=50]{1,2}{1,4} \endpsmatrix
\hspace{0.7cm} \psmatrix[colsep=0.5cm,rowsep=1cm,mnode=circle]
0&1&2&3 \everypsbox{\scriptstyle} \ncarc[arcangle=30]{1,1}{1,2}
\ncarc[arcangle=30]{1,3}{1,4} \ncarc[arcangle=50]{1,2}{1,4}
\endpsmatrix
$$
\caption{Example of a non-normal largeness graph equivalent to a
normal one.}
\end{figure}

The following lemma will be very useful for counting purposes.

\begin{lemma}
  The following are in one-to-one correspondance for every $n \geq 2$:
  \begin{itemize}
  \item[(i)] packed largeness graphs of size $n$
  \item[(ii)] normal largeness graphs of size $n$
  \item[(iii)] largeness graphs of size $n-1$
  \end{itemize}
\end{lemma}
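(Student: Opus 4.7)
The plan is to establish the three-way correspondence by constructing two bijections: $\phi$ between (ii) and (iii), and $\psi$ between (ii) and (i); composing them gives a bijection between (i) and (iii).

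For $\phi$, I would send a normal largeness graph $G$ of size $n$ to its induced subgraph on $\{0, \dots, n-2\}$; properties (a)--(d) pass to the restriction for free. For the inverse, I would take a largeness graph $G'$ of size $n-1$, adjoin a new vertex $n-1$, include the edge $\{n-2, n-1\}$ (making the graph normal), and, for each $i < n-2$, include the edge $\{i, n-1\}$ precisely when $\{i, i+1\} \not\in E_{G'}$. For $\psi$, I would send a normal $G$ to the graph obtained by erasing every edge of the form $\{i, i+1\}$; Lemma~\ref{lem:equivalence-largeness-to-packed} guarantees the result is a packed largeness graph. For its inverse, given a packed $G_P$, I would adjoin $\{n-2, n-1\}$ and, for each $i < n-2$, add $\{i, i+1\}$ precisely when $\{i, n-1\} \not\in E_{G_P}$.

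The principle behind all four constructions is the equivalence
\[
\{i, i+1\} \in E_G \ \Longleftrightarrow\ \{i, n-1\} \not\in E_G \qquad (i < n-2),
\]
which holds in any normal largeness graph $G$: the forward direction uses property (a) applied to $\{i, i+1\}$, while the backward direction uses property (b) applied with $j = n-2$. This equivalence simultaneously ensures that the inverse extensions are normal largeness graphs and that $\phi, \phi^{-1}$ and $\psi, \psi^{-1}$ are mutually inverse.

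The main obstacle is verifying properties (c) and (d) of the extensions, since (a) and (b) are built into the defining rules. Property (c) at the new vertex reduces to the observation that $\{i, n-2\} \in E_{G'}$ forces $\{i, i+1\} \not\in E_{G'}$ via property (a) of $G'$, so the rule correctly supplies $\{i, n-1\}$. Property (d) with $k = n-1$ calls for a case split on whether vertex $i$ has any forward edge in $G'$: if it does, then property (d) of $G'$ combined with the upward closure from (c) supplies $\{j-1, n-2\} \in E_{G'}$, whence (a) of $G'$ gives $\{j-1, j\} \not\in E_{G'}$; if it does not, then an inductive application of (b) in $G'$ rules out every subsequent adjacent edge, again giving $\{j-1, j\} \not\in E_{G'}$. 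In either case the rule places $\{j-1, n-1\}$ in $E_G$, as required. The analogous checks for $\psi^{-1}$ follow by swapping the roles of $\{i, i+1\}$ and $\{i, n-1\}$.
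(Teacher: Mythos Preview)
Your proposal is correct and follows essentially the same route as the paper. For $(ii)\leftrightarrow(iii)$ the paper also restricts/extends by the last vertex, but simply asserts that ``the edges with endpoint $n-1$ are all uniquely determined by the definition of a largeness graph''; you supply the explicit rule $\{i,n-1\}\in E \Leftrightarrow \{i,i+1\}\notin E_{G'}$ and carry out the (c),(d) verifications that the paper leaves implicit. For $(i)\leftrightarrow(ii)$ the paper phrases the bijection via the equivalence relation~$\sim$: each $\sim$-class contains a unique packed member (Lemma~\ref{lem:equivalence-largeness-to-packed}) and a unique normal member (Lemma~\ref{lem:equivalence-normal-largeness-graph} plus a short uniqueness argument), so the two families index the same set of classes. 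Your maps $\psi,\psi^{-1}$ are exactly the canonical-representative maps implicit in that argument, and your ``key equivalence'' $\{i,i+1\}\in E_G \Leftrightarrow \{i,n-1\}\notin E_G$ is precisely the content of the paper's uniqueness proof for normal graphs. One caution: your claim that the checks for $\psi^{-1}$ follow ``by swapping the roles of $\{i,i+1\}$ and $\{i,n-1\}$'' is not a literal symmetry (one map changes the vertex set, the other does not), so in a polished write-up you should either verify (a)--(d) for $\psi^{-1}$ directly or appeal to Lemma~\ref{lem:normal-largeness-graph-progress} as the paper does.
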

\begin{proof}
  $(i) \Leftrightarrow (ii)$: By
  Lemma~\ref{lem:equivalence-largeness-to-packed} and
  Lemma~\ref{lem:equivalence-normal-largeness-graph}, every
  equivalence class from $\sim$ contains a packed largeness graph of
  size $n$ and a normal largeness graph of size $n$. Moreover, there
  cannot be two packed largeness graphs in the same equivalence
  class. We now prove that two normal largeness graphs cannot belong
  to the same equivalence class. Indeed, let
  $\Gcal_0 = (\{0, \dots, n-1\}, E_0)$ and
  $\Gcal_1 = (\{0, \dots, n-1\}, E_1)$ be two normal largeness graphs
  of size $n$ such that $\Gcal_0 \sim \Gcal_1$ but
  $\Gcal_0 \neq \Gcal_1$. Then without loss of generality, we can
  assume there is some $i < n-1$ such that $\{i,i+1\} \in E_0$ and
  $\{i,i+1\} \not \in E_1$. By definition of normality,
  $\{n-2,n-1\} \in E_0 \cap E_1$, so by property (b) of the definition
  of a largeness graph, $\{i,n-1\} \in E_1$, and by property (a) of
  the definition of a largeness graph, $\{i,n-1\} \not \in E_0$,
  contradicting $\Gcal_0 \sim \Gcal_1$.

  $(ii) \Leftrightarrow (iii)$: Every largeness graph
  $\Gcal_0 = (\{0, \dots, n-2\}, E_0)$ of size $n-1$ can be augmented
  into a normal largeness graph $\Gcal_1 = (\{0, \dots, n-1\}, E_1)$
  of size $n$ by adding a node $n-1$ with $\{n-2,n-1\} \in E_1$. The
  edges with endpoint $n-1$ are all uniquely determined by the
  definition of a largeness graph, so the graph $\Gcal_1$ is
  unique. Conversely, given a normal largeness graph $\Gcal_1$, the
  subgraph induced by removing the last node is a largeness graph.
\end{proof}

We will now count the number of packed and general largeness graphs of
size $n$.  
Let
$d_0, d_1, \dots$ be the sequence of Catalan numbers inductively defined by
$d_0 = 1$ and
$$
d_{n+1} = \sum_{i=0}^n d_i d_{n-i}
$$

\begin{lemma}\label{lem:counting-largeness-graphs}
  For every $n \geq 0$, there exists exactly $d_n$ many largeness
  graphs of size $n$.
\end{lemma}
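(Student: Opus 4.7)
The plan is to prove by induction on $n$ that $a_n = d_n$, where $a_n$ denotes the number of largeness graphs of size $n$. The base cases $n \leq 1$ are immediate since the only such graph is edgeless, giving $a_0 = a_1 = 1$. For the inductive step, I verify the Catalan recurrence $a_{n+1} = \sum_{k=0}^n a_k a_{n-k}$.

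The key structural observation is that properties (c) and (d) force a nested ``suffix'' shape on the edges: for each vertex $v$, the set of $j > v + 1$ with $\{v, j\} \in E$ is either empty or a suffix $\{r(v), r(v) + 1, \ldots, n\}$ for some \emph{reach} $r(v) \geq v + 2$; and if $r(v) = k$ then $r(w) \leq k$ for every intermediate $w \in \{v + 1, \ldots, k - 2\}$. This noncrossing/nested condition is exactly what drives the Catalan recursion.

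Given a largeness graph $\Gcal$ of size $n + 1$, I decompose based on the reach $r(0)$ of vertex $0$, which takes values in $\{\infty, 1, 2, \ldots, n\}$ (with $r(0) = 1$ when $\{0, 1\} \in E$, which by (a) precludes any other edge from $0$, and $r(0) = \infty$ when $0$ is isolated). For each value of $r(0)$, properties (a), (b), and (d) determine the forced edges and forbidden short edges, leaving a free structure that is canonically a pair of smaller largeness graphs whose sizes sum to $n$. Concretely: when $r(0) = 1$, the rest on $\{1, \ldots, n\}$ is an arbitrary largeness graph of size $n$, contributing $a_0 \cdot a_n$; when $r(0) = \infty$, property (b) applied with $i = 0$ forbids all short edges among $\{1, \ldots, n\}$, so by the preceding lemma the rest is a packed graph counted by $a_{n-1} = a_1 \cdot a_{n-1}$; and when $r(0) = j \in \{2, \ldots, n\}$, the contribution is $a_{j-2} \cdot a_{n-j+2}$. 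Summing these contributions reproduces the Catalan convolution $\sum_{k=0}^n a_k a_{n-k}$, and a numerical check for $n + 1 = 5$ (where $r(0) \in \{\infty, 1, 2, 3, 4\}$ contribute $5, 14, 14, 5, 4$, totalling $42 = d_5$) confirms the counting.

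The main obstacle is the boundary bookkeeping in the case $r(0) = j \geq 2$. The vertex $j - 1$ may itself have a reach into the right block $\{j, \ldots, n\}$, and this reach interacts with the right subgraph via property (b) (for short edges crossing the boundary) and via property (d) (which cascades the reach onto the leading vertex of the right block). I would verify that these constraints ensure the reach of $j - 1$ is canonically encoded inside the right-hand subgraph after reindexing, rather than being an independent free parameter, thereby establishing the claimed bijection and closing the induction.
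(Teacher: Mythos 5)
Your decomposition by the reach $r(0)$ of vertex $0$ is a legitimate alternative to the paper's argument (the paper instead splits on the position of the \emph{leftmost short edge} $\{i,i+1\}$, getting a packed left part and a free right part), your structural observation about suffix/nested neighbourhoods is correct, and your contributions do sum to the Catalan convolution; the cases $r(0)=1$ and $r(0)=\infty$ are fine. The problem is that the heart of the induction, the case $r(0)=j$ with $2\le j\le n$, is exactly the step you defer (\qt{I would verify}), and as described the bijection is not correct for the naive block split $\{1,\dots,j-2\}$ versus $\{j-1,\dots,n\}$. When $r(0)=j$, property (d) with $i=0$ forces $\{w,k\}\in E$ for every $w\in\{1,\dots,j-2\}$ and every $k\ge j$, and property (a) then forces every short edge $\{w,w+1\}$ with $w\le j-2$ to be absent; so the induced graph on $\{1,\dots,j-2\}$ can never be an arbitrary largeness graph of size $j-2$, while the long edges $\{w,j-1\}$ with $w\le j-3$ carry genuine freedom and lie in neither of your two blocks. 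Concretely, for size $5$ and $r(0)=4$ the induced graph on $\{1,2\}$ is always edgeless, and the four graphs you count come from the free choices of $\{1,3\}$ and $\{3,4\}$. Your proposed repair also aims at the wrong boundary: the reach of $j-1$ into $\{j,\dots,n\}$ is already recorded inside the right-hand induced subgraph on $\{j-1,\dots,n\}$ and causes no difficulty; the unaccounted data flows from the left block \emph{into} $j-1$.

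The gap is closable, but it needs the same ingredient the paper uses. Take the left-hand datum to be the induced subgraph on $\{1,\dots,j-1\}$; show it has no short edges, i.e.\ it is packed; check that all cross edges from $\{1,\dots,j-2\}$ into $\{j,\dots,n\}$ are forced present, that the edges at $0$ are determined by $r(0)=j$, and that conversely every pair consisting of a packed largeness graph on $\{1,\dots,j-1\}$ and a largeness graph on $\{j-1,\dots,n\}$ assembles into a valid largeness graph with reach exactly $j$; then invoke the preceding one-to-one correspondence lemma (packed largeness graphs of size $m$ correspond to largeness graphs of size $m-1$) to turn the left factor into $d_{j-2}$. You already use that correspondence correctly in the case $r(0)=\infty$, and it is precisely how the paper obtains its left factor $d_i$ after splitting at the leftmost short edge. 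Until this verification is written out, the inductive step is asserted (and numerically checked at one size) rather than proved.
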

\begin{proof}
  By induction over $n$. By convention, there exists a unique
  largeness graph with no nodes, and we let $d_0 = 1$. Assume the
  property holds for every $j \leq n$.  Consider an arbitrary
  largeness graph of size $n+1$.  Let $i < n$ be the least index such
  that $\{i, i+1\}$ has an edge, if it exists. If there is no such
  index, then set $i = n$. We have two cases.

  Case 1: $i = 0$. Then all the edges with the endpoint $0$ are
  already specified, and the graph with nodes $\{1,\dots, n\}$ is
  unspecified. Therefore, by induction hypothesis, there are $d_n$
  many possibilities. Since $d_0 = 1$, there are $d_i \cdot d_{n-1}$
  many possibilities.

  Case 2: $0 < i \leq n$. Then by minimality of $i$, there is no edge
  $\{j,j+1\}$ for any $j < i$, and an edge $\{i,i+1\}$. By definition
  of a largeness coloring, all the edges between nodes on the left of
  $i$ and on the right of $i+1$ are fully specified. However, the
  subgraph on $\{0, \dots, i\}$ can be an arbitrary packed largeness
  graph of size $i+1$, and the subgraph on $\{i+1, \dots, n\}$ can be
  an arbitrary largeness graph of size $n-i$. By the one-to-one
  correspondence between packed largeness graphs of size $i+1$ and
  largeness graphs of size $i$, and by induction hypothesis, there are
  $d_i \cdot d_{n-i}$ many possibilities. See
  Figure~\ref{fig:example-counting-largeness-coloring}.

  Summing up the possibilities, we get
  $d_{n+1} = \sum_{i=0}^n d_i d_{n-i}$.  This completes the proof.
\end{proof}

It follows that $d_n$ is also the number of packed largeness graphs of
size $n+1$, and the number of normal largeness graphs of size $n+1$.

\begin{figure}[htbp]
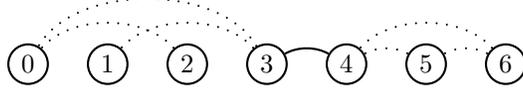

  \vspace{0.5cm}
$$
\psmatrix[colsep=0.5cm,rowsep=1cm,mnode=circle] 0&1&2&3&4&5&6
\everypsbox{\scriptstyle} \ncarc[arcangle=30]{1,4}{1,5}_{}
\ncarc[linestyle=dotted,arcangle=30]{1,5}{1,6}_{}
\ncarc[linestyle=dotted,arcangle=30]{1,6}{1,7}_{}
\ncarc[linestyle=dotted,arcangle=50]{1,1}{1,3}_{}
\ncarc[linestyle=dotted,arcangle=50]{1,2}{1,4}_{}
\ncarc[linestyle=dotted,arcangle=60]{1,1}{1,4}_{}
\ncarc[linestyle=dotted,arcangle=50]{1,5}{1,7}_{} \endpsmatrix
$$
\caption{Assuming that $\{3, 4\}$ is the left-most adjacent pair
with an edge. Then the packed largeness coloring $\{0,1,2,3\}$ and
the largeness coloring $\{4,5,6\}$ remain to be
determined. Therefore there are $d_3 \cdot d_3$ many
possibilities.}\label{fig:example-counting-largeness-coloring}
\end{figure}

We are ready to prove our main lower bound theorem.

\begin{theorem}\label{thm:rt354-domination-left-ce}
  Let $g : \omega \to \omega$ be a left-c.e.\ increasing function.
  For every $n \geq 1$, there is a $\Delta^0_2$ coloring
  $f : [\omega]^n \to d_n$ such that every infinite $f$-thin set
  computes a function dominating $g$.
\end{theorem}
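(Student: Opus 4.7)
The plan is to encode the $g$-largeness and $\hat{g}_s$-smallness structure of each $n$-tuple as a largeness graph of size $n$, so that by Lemma~\ref{lem:counting-largeness-graphs} the coloring uses at most $d_n$ colors, and then to show that missing any single such graph forces enough sparsity that an infinite thin set must compute a function dominating $g$. First, replace the sequence $g_0,g_1,\dots$ by the sequence $\hat{g}_0,\hat{g}_1,\dots$ from Lemma~\ref{ceapproximations}, so that $g=\lim_s \hat{g}_s$ is increasing and all six listed properties hold (it is enough to dominate this enlarged $g$). For a tuple $\vec{x}=(x_0<x_1<\cdots<x_{n-1})$, define $\Gcal(\vec{x})=(\{0,\dots,n-1\},E)$ by placing $\{i,i+1\}\in E$ iff $[x_i,x_{i+1}]$ is $g$-large, and, for $j>i+1$, placing $\{i,j\}\in E$ iff $[x_i,x_{i+1}]$ is $\hat{g}_{x_j}$-small. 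Set $f(\vec{x})=\Gcal(\vec{x})$.

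Next I would verify that $\Gcal(\vec{x})$ is always a largeness graph by checking properties (a)--(d). Property (a) is immediate, since a $g$-large interval cannot be $\hat{g}_s$-small for any $s$. Property (c) follows from the non-decreasing behavior of $s\mapsto\hat{g}_s(x)$. Property (b) uses that a $g$-large interval $[x_j,x_{j+1}]$ forces the witness stage for any previously $g$-small interval $[x_i,x_{i+1}]$ to lie at most $x_{j+1}$. Property (d) is the most delicate and is exactly where clause (6) of Lemma~\ref{ceapproximations} is used: if $[x_i,x_{i+1}]$ is $\hat{g}_{x_k}$-small but $\hat{g}_{x_j}$-large for $i+1<j<k$, then the first stage $t$ witnessing smallness of $[x_i,x_{i+1}]$ satisfies $x_j<t\leq x_k$, and clause (6) with $a=x_i$ gives $\hat{g}_t(x_{j-1})\geq t>x_j$, so $[x_{j-1},x_j]$ is $\hat{g}_{x_k}$-small. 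Once this is done, the range of $f$ lies inside the set of largeness graphs of size $n$, which has cardinality $d_n$ by Lemma~\ref{lem:counting-largeness-graphs}; since $g$-largeness is $\Pi^0_1$ while $\hat{g}_s$-smallness is computable uniformly in $s$, the coloring $f$ is $\Delta^0_2$.

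The main step is then to show that every infinite $f$-thin set $H$ computes a function dominating $g$. Let $\Gcal^*$ be a largeness graph never realized by a tuple in $[H]^n$. I would induct on $n$; the base case $n=1$ is vacuous since $d_1=1$ leaves no room for a thin set, while the case $n=2$ is direct (missing the no-edge graph means every pair is $g$-large and $p_H$ dominates $g$; missing the edge graph forces $H$ to be finite). For the induction step I would mirror the case split from the proof of Lemma~\ref{lem:counting-largeness-graphs}: let $i$ be the least index with $\{i,i+1\}\in E(\Gcal^*)$, or $i=n-1$ if no such edge exists. Writing $\Gcal^*$ as the combination of a packed largeness graph on $\{0,\dots,i\}$ (equivalent, via the bijection in the earlier counting lemma, to a general largeness graph of size $i$) and a general largeness graph on $\{i+1,\dots,n-1\}$, the constraint of missing $\Gcal^*$ splits into two independent combinatorial obligations on $n$-tuples of $H$, each of which either triggers the inductive hypothesis on smaller arity or makes a cofinal tail of $H$ sparse enough that its $H$-computable principal function dominates $g$.

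The principal obstacle is this inductive case analysis, whose bookkeeping depends critically on which portion of the tuple encodes the large-gap chain and which encodes the timing of small gaps, and on always being able to pass to an $H$-computable subset where one half of the tuple is frozen and the other is varied. Closely related, and technically just as sensitive, is the verification of property (d) via clause (6) of Lemma~\ref{ceapproximations}; without that clause the coloring would not stay inside the $d_n$ largeness graphs and the intended color bound would fail.
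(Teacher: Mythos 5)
Your coloring is exactly the paper's: encode each tuple by its largeness graph (adjacent edges from $g$-largeness, non-adjacent edges from $g_{x_j}$-smallness of earlier adjacent intervals), verify properties (a)--(d) using clause (6) of Lemma~\ref{ceapproximations}, and bound the number of colors by $d_n$ via Lemma~\ref{lem:counting-largeness-graphs}. Up to that point you agree with the paper. But the part you yourself flag as "the principal obstacle" is the actual theorem, and the mechanism you sketch for it is wrong in the decisive case. If the avoided graph $\Gcal^*$ is \emph{packed}, avoiding it does not make $H$ (or any cofinal tail of $H$) sparse, so "the $H$-computable principal function of a tail dominates $g$" is not available: for instance, $H$ can consist entirely of $g$-small adjacent intervals whose smallness is witnessed only very late, so that every realized graph lies in the class of $\Gcal^*$ but differs from it on a non-adjacent edge's timing, and $p_H$ can be computably bounded. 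The paper's argument in this case is of a different nature: it uses the equivalence relation $\sim$ (graphs agreeing on all non-adjacent edges), notes that membership of a tuple's graph in the class of $\Gcal^*$ is \emph{decidable} from $H$ because only the computable $g_{x_j}$-smallness conditions must be checked, and forms the $H$-c.e.\ set $W$ of tuples in $[H]^n$ whose graph is $\sim \Gcal^*$. Since $H$ avoids $\Gcal^*$ itself and $\Gcal^*$ is packed, every tuple in $W$ must contain a $g$-large adjacent interval, so the function that searches $W$ for a tuple above $t$ and outputs its last element dominates $g$. Cofinal existence of such tuples is what needs the normal representative $\Gcal_1 \sim \Gcal^*$ (Lemma~\ref{lem:equivalence-normal-largeness-graph}): realize the $(n-1)$-prefix of $\Gcal_1$ inside a tail of $H$ (else that tail is thin for the smaller coloring and the induction hypothesis already gives domination), then append one far-out element of $H$ to make the last interval $g$-large. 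None of this machinery ($\sim$, normal graphs, the c.e.\ set $W$) appears in your plan, and it cannot be replaced by a sparsity argument.

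Your non-packed case is also not a split into "two independent obligations": the paper's Case~1 is a gluing argument. One shows that either some tail of $H$ is thin for the induced prefix or suffix pattern (so induction applies), or else one can realize the prefix, pass a threshold $t$ making $[x_i,t]$ $g$-large, and realize the suffix beyond $t$; the cross edges are then forced (this is the same determination used in the counting lemma), so the glued tuple realizes $\Gcal^*$, contradicting thinness. So in the non-packed case the conclusion always comes from the induction hypothesis, never from sparsity, and in the packed case it comes from the c.e.\ search just described. As written, your proposal establishes the color bound and the $\Delta^0_2$ bound but leaves the domination claim unproved, with the proposed fallback mechanism failing precisely where the theorem has content.
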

\begin{proof}
  Let $\Lcal_n$ be the collection of all largeness graphs of size $n$.
  For every $n \geq 1$, we construct a function
  $f_n : [\omega]^n \to \Lcal_n$ such that every infinite $f_n$-thin
  set computes a function dominating $g$.  By
  Lemma~\ref{lem:counting-largeness-graphs}, the range of $f_n$ has
  size at most $d_n$.

  The case $n = 1$ is vacuously true, since $d_1 = 1$, and there is no
  infinite thin set for a 1-coloring of $\omega$.

  Assume that the property holds up to $n-1$.  Let
  $f_n(x_0, \dots, x_{n-1})$ be the largeness graph
  $\Gcal = (\{0, \dots, n-1\}, E)$ such that for $i < n-1$,
  $\{i,i+1\} \in E$ iff $[x_i, x_{i+1}]$ is $g$-large, and for
  $i+1 < j < n$, $\{i,j\} \in E$ iff $[x_i, x_{i+1}]$ is
  $g_{x_j}$-small.

  We now prove that any infinite $f$-thin set computes a function
  dominating~$g$. Let $H$ be an infinite $f$-thin set for some
  largeness graph $\Gcal = (\{0, \dots, n-1\}, E)$. We have two cases.

  Case 1: $\Gcal$ is not a packed largeness graph. There is some
  $i < n-1$ such that $\{i, i+1\} \in E$. There must be some
  $x_0 < \dots < x_i \in H$ such that $f_{i+1}(x_0, \dots, x_i)$ is
  the largeness subgraph of $\Gcal$ of size $i+1$ induced by the
  vertices $\{0, 1, \dots, i\}$, otherwise $H$ is $f_{i+1}$-thin, and
  by induction hypothesis, $H$ computes a function dominating~$g$ and
  we are done. Let $t$ be large enough such that $[x_i, t]$ is
  $g$-large.  There must be some
  $x_{i+1} < \dots < x_{n-1} \in H - \{0, \dots, t\}$ such that
  $f_{n-i-1}(x_{i+1}, \dots, x_{n-1})$ is the largeness subgraph of
  $\Gcal$ induced by the vertices $\{i+1, \dots, n-1\}$, otherwise
  $H - \{0, \dots, t\}$ is $f_{n-i-1}$-thin, and by induction
  hypothesis, $H$ computes a function dominating~$g$. In particular,
  $[x_i, x_{i+1}]$ is $g$-large, so $f(x_0, \dots, x_{n-1}) =
  \Gcal$. This contradicts the fact that $H$ is $f_n$-thin for color
  $\Gcal$.

  Case 2: $\Gcal$ is a packed largeness graph.  Note that the set
 $$
 W = \{ \{x_0, x_1, \dots, x_{n-1}\} \in [H]^n : f_n(x_0, \dots,
 x_{n-1}) \sim \Gcal \}
 $$
 is $H$-c.e. Indeed, this requires only to check that for
 $i+1 < j < n$, $\{i,j\} \in E$ iff $[x_i, x_{i+1}]$ is
 $g_{x_j}$-small, which is decidable.  Since $H$ is $f_n$-thin for
 color $\Gcal$, for every $\{x_0, \dots, x_{n-1}\} \in W$, one of
 $[x_i, x_{i+1}]$ is $g$-large. Therefore, it suffices to prove that
 for every $t \in \omega$, there is such a tuple with
 $t < \min\{x_0, \dots, x_{n-1}\}$. Indeed, if so, the function
 $h : \omega \to \omega$ which on $t$ $H$-computably searches for such
 a tuple $\{x_0, \dots, x_{n-1}\} \in W$ and outputs $x_{n-1}$ is a
 function dominating~$g$.
 
 By Lemma~\ref{lem:equivalence-normal-largeness-graph}, there is a
 normal largeness graph
 $\Gcal_1 = (\{0, \dots, n-1\},E_1) \sim \Gcal$. For every
 $t \in \omega$, there is some
 $x_0 < \dots < x_{n-2} \in H - \{0, \dots, t\}$ such that
 $f_{n-1}(x_0, \dots, x_{n-2})$ is the largeness subgraph of $\Gcal_1$
 of size $n-1$ induced by the vertices $\{0, \dots, n-2\}$, otherwise
 $H - \{0, \dots, t\}$ is $f_{n-1}$-thin, and by induction hypothesis,
 $H$ computes a function dominating~$g$.  Let $x_{n-1} \in H$ be
 sufficiently large so that $[x_{n-2}, x_{n-1}]$ is $g$-large. Then
 $f_n(x_0, \dots, x_{n-1}) = \Gcal_1$, and therefore
 $\{x_0, \dots, x_{n-1}\} \in W$.  This completes the proof of
 Theorem~\ref{thm:rt354-domination-left-ce}.
\end{proof}

\begin{corollary}
  For every $n \geq 2$ and $k \geq 1$, there is a computable instance
  of $\rt^{n+k}_{<\infty, d_n-1}$ such that every solution computes
  $\emptyset^{(k)}$.
\end{corollary}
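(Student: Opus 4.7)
The plan is to mirror the proof of the preceding corollary, with Theorem~\ref{thm:rt354-domination-left-ce} replacing Theorem~\ref{thm:thin-set-dominating-arbitrary-g}. Since $\emptyset^{(k)}$ is c.e. in $\emptyset^{(k-1)}$, I would first choose an increasing modulus $g : \omega \to \omega$ for $\emptyset^{(k)}$ that is left-c.e.\ relative to $\emptyset^{(k-1)}$---for instance, the settling time of a $\emptyset^{(k-1)}$-computable enumeration of $\emptyset^{(k)}$---and invoke the relativization of Lemma~\ref{ceapproximations} to $\emptyset^{(k-1)}$ to obtain an approximation $(g_s)$ with the monotonicity properties used in the proof of Theorem~\ref{thm:rt354-domination-left-ce}. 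Applying that theorem relative to $\emptyset^{(k-1)}$ yields a $\Delta^0_2(\emptyset^{(k-1)}) = \Delta^0_{k+1}$ coloring $f : [\omega]^n \to d_n$ whose infinite thin sets compute a function dominating $g$, and hence compute $\emptyset^{(k)}$.

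Next, I would apply Shoenfield's limit lemma $k$ times to realize $f$ as an iterated pointwise limit of a computable coloring $h : [\omega]^{n+k} \to d_n$, so that
$$
f(x_0, \ldots, x_{n-1}) = \lim_{x_n} \cdots \lim_{x_{n+k-1}} h(x_0, \ldots, x_{n+k-1}).
$$
The standard nested-limit argument shows that every infinite $h$-thin set $H$ is $f$-thin: if the missing color $\Gcal^*$ were taken by $f$ on some $\vec{x} \in [H]^n$, then by picking tail coordinates in $H$ large enough one could arrange $h(\vec{x}, x_n, \ldots, x_{n+k-1}) = f(\vec{x}) = \Gcal^*$, contradicting thinness. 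Consequently $h$ is the desired computable instance of $\rt^{n+k}_{<\infty, d_n - 1}$ every solution of which computes $\emptyset^{(k)}$.

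The main obstacle is ensuring that the dominating function extracted in the proof of Theorem~\ref{thm:rt354-domination-left-ce} is computable from $H$ alone, rather than from $H \oplus \emptyset^{(k-1)}$, as a naive relativization would give (the set $W$ of the original proof would normally require $\emptyset^{(k-1)}$ to decide $g_{x_j}$-smallness). The saving observation is that the iterated Shoenfield approximations of $g$ are encoded computably inside the auxiliary coordinates of $h$: the analog of $W$ built on $(n+k)$-tuples via $h$, namely $\hat{W} = \{(\vec{x}, \vec{y}) \in [H]^{n+k} : h(\vec{x}, \vec{y}) \sim \Gcal^*\}$, is $H$-computably enumerable, and searching it above an input threshold $t$ furnishes the required $H$-computable function dominating $g$.
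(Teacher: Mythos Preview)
Your first two paragraphs reproduce the paper's proof almost verbatim: choose a modulus $g$ for $\emptyset^{(k)}$ that is left-c.e.\ over $\emptyset^{(k-1)}$, apply Theorem~\ref{thm:rt354-domination-left-ce} relativized to $\emptyset^{(k-1)}$ to get a $\Delta^0_{k+1}$ coloring $f:[\omega]^n\to d_n$, pull it down via $k$ applications of Shoenfield's limit lemma to a computable $h:[\omega]^{n+k}\to d_n$, and observe that $h$-thin implies $f$-thin. The paper stops there.

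Your third paragraph goes beyond the paper. You correctly observe that the naive relativization of Theorem~\ref{thm:rt354-domination-left-ce} only yields that $H\oplus\emptyset^{(k-1)}$ computes a function dominating $g$, since the set $W$ of Case~2 is enumerated using the approximations $g_s$, which are $\emptyset^{(k-1)}$-computable rather than computable. The paper's proof simply asserts that ``every infinite $f$-thin set computes a function dominating $g$'' without commenting on this, so you are flagging a genuine elision. (For $k=1$ the issue disappears, and for general $k$ one can repair it along the lines of the bootstrapping in the proof of Theorem~\ref{thm:rt354-every-arithmetical-set}.)

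Your proposed fix via $\hat W=\{(\vec x,\vec y)\in[H]^{n+k}:h(\vec x,\vec y)\sim\Gcal^*\}$ is the right instinct but does not work for an \emph{arbitrary} Shoenfield approximation $h$: knowing $h(\vec x,\vec y)\sim\Gcal^*$ and $h(\vec x,\vec y)\neq\Gcal^*$ tells you they differ on some adjacent edge of the approximate graph, but for a generic $h$ this carries no information about actual $g$-largeness of $[x_i,x_{i+1}]$. To make the argument go through you must build $h$ explicitly so that its values are the largeness graphs computed from staged computable approximations to $g$ and to the $g_s$; only then does membership in $\hat W$ force a genuinely $g$-large interval among the $n+k$ coordinates. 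So your diagnosis is right, but the remedy needs the specific form of $h$ spelled out.
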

\begin{proof}

  $\emptyset^{(k)}$ is c.e.\ in $\emptyset^{(k-1)}$.  Let
  $g : \omega \to \omega$ be a $\emptyset^{(k-1)}$-computable modulus
  of $\emptyset^{(k)}$.  $g$ is left c.e.\ over $\emptyset^{(k-1)}$.   By
  Theorem~\ref{thm:rt354-domination-left-ce}, there is a
  $\emptyset^{(k)}$-computable function $f : [\omega]^n \to d_n$ such
  that every infinite $f$-thin set computes a function dominating $g$,
  hence computes $\emptyset^{(k)}$.  By Schoenfield's limit lemma,
  there is a computable function $h : [\omega]^{n+k} \to d_n$ such
  that for every $x_0 < \dots < x_{n-1} \in \omega$,
$$
f(x_0, \dots, x_{n-1}) = \lim_{x_n} \ldots \lim_{x_{n+k-1}} h(x_0,
\dots, x_{n+k-1})
$$
Every infinite $h$-thin set is $f$-thin, and therefore computes
$\emptyset^{(k)}$.
\end{proof}

By a relativization of the proof of
Theorem~\ref{thm:rt354-domination-left-ce} and using more colors, one
can code every arithmetical set.

\begin{theorem}\label{thm:rt354-every-arithmetical-set}
  Let $A$ be an arithmetical set.  For every $n \geq 1$, there is a
  coloring $f : [\omega]^n \to k \cdot d_n$ such that every infinite
  set $H$ such that $|f[H]^n| < d_n$ computes $A$.
\end{theorem}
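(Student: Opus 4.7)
My plan is to extend Theorem~\ref{thm:rt354-domination-left-ce} from the left-c.e.\ ($\Sigma^0_1$) setting to the full arithmetical hierarchy by iterating the largeness-graph construction through the $k$ nested levels of approximation. Since $A$ is arithmetical, fix the least $k \geq 1$ with $A \leq_T \emptyset^{(k)}$; by Theorem~\ref{modulus}, $A$ admits a modulus $\mu_A$, which is itself $\emptyset^{(k)}$-computable, so it suffices to build a computable $f : [\omega]^n \to k \cdot d_n$ such that every infinite $H$ with $|f[H]^n| < d_n$ computes a function dominating $\mu_A$.

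The heart of the construction is a $k$-fold nested computable approximation of $\mu_A$: by iterating Shoenfield's limit lemma, one can write $\mu_A(x) = \lim_{s_k} \cdots \lim_{s_1} G(x, s_1, \ldots, s_k)$ for some computable $G$. Iterating the monotonization procedure of Lemma~\ref{ceapproximations} at each level, I would arrange that every intermediate limit converges monotonically from below, and that the growth is fast enough that if level-$j$ smallness of $[a,b]$ is witnessed at stage $s$ (relative to lower-level parameters fixed) then the corresponding value already dominates $s$. In particular, for every $j \leq k$ and every stage $t$, the predicate ``$[a,b]$ is small at level $j$ by stage $t$'' is decidable.

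The coloring $f : [\omega]^n \to k \cdot d_n$ assigns to an $n$-tuple $x_0 < x_1 < \cdots < x_{n-1}$ a pair $(j, \Gcal)$, where $j \in \{1, \ldots, k\}$ is the least approximation level at which the smallness/largeness pattern of the intervals $[x_i, x_{i+1}]$ is already witnessed by stage $x_{n-1}$, and $\Gcal$ is the associated largeness graph of size $n$ defined exactly as in Theorem~\ref{thm:rt354-domination-left-ce}. By Lemma~\ref{lem:counting-largeness-graphs} there are exactly $d_n$ such graphs, so the total number of colors is $k \cdot d_n$.

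The verification that every $H$ with $|f[H]^n| < d_n$ computes $A$ then mirrors the case analysis in Theorem~\ref{thm:rt354-domination-left-ce}: by the pigeonhole, at some level $j$ there is a packed largeness graph $\Gcal$ such that the color $(j, \Gcal)$ is missing from $f[H]^n$; passing to the normal form via Lemma~\ref{lem:equivalence-normal-largeness-graph}, $H$ can c.e.-enumerate a collection $W$ of $n$-tuples each of which forces at least one interval to be level-$j$ large, and an $H$-computable search over $W$ produces a function dominating the level-$j$ approximation, from which a function dominating $\mu_A$ can be read off by climbing the $k$ levels. The main obstacle will be coordinating the growth rates across the $k$ levels so that a missing color at level $j$ genuinely yields new, $H$-computable largeness information at that level, rather than information that is already forced by the higher-level approximation data encoded in $f$; the extra factor of $k$ in the color count is precisely the budget that lets $f$ record, for each tuple, the first level at which its smallness pattern stabilizes.
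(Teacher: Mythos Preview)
Your proposal has a genuine gap in the pigeonhole step, and the paper takes a different (and simpler) route.

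First, the theorem does not require $f$ to be computable: the notion of $\rt^n_{<\infty,\ell}$-encodable allows arbitrary instances. The paper simply relativizes Theorem~\ref{thm:rt354-domination-left-ce} to each jump $\emptyset^{(j)}$ for $j < k$ and takes the \emph{product} coloring
\[
f_n(\vec{x}) \;=\; \bigl\langle \Psi^{\emptyset}_n(\vec{x}),\ \Psi^{\emptyset'}_n(\vec{x}),\ \dots,\ \Psi^{\emptyset^{(k-1)}}_n(\vec{x}) \bigr\rangle,
\]
where $\Psi^X_n$ is the largeness-graph coloring built from a left-c.e.\ (relative to $X$) modulus of $X'$. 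The key observation is trivial: if $|f_n[H]^n| < d_n$ then \emph{every} coordinate projection also takes fewer than $d_n$ values, so $H$ is $\Psi^{\emptyset^{(j)}}_n$-thin for \emph{each} $j < k$. One then bootstraps: thinness for $\Psi^{\emptyset}_n$ gives $H \geq_T \emptyset'$; using $\emptyset' \leq_T H$, thinness for $\Psi^{\emptyset'}_n$ gives $H \geq_T \emptyset''$; and so on up to $\emptyset^{(k)} \geq_T A$.

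Your construction instead assigns to each tuple a single pair $(j,\Gcal)$, where $j$ is a ``first stable level''. But avoiding the color $(j,\Gcal)$ does \emph{not} say that $H$ is thin for the level-$j$ largeness coloring: it only says that no tuple of $H$ simultaneously has least-stable-level equal to $j$ \emph{and} largeness graph $\Gcal$ at that level. For example, every tuple in $[H]^n$ might already stabilize at level $1$, in which case all colors $(j,\cdot)$ with $j>1$ are vacuously avoided, giving no information whatsoever about those levels. Your pigeonhole (``at some level $j$ there is a packed $\Gcal$ with $(j,\Gcal)$ missing'') is therefore far too weak to feed into the case analysis of Theorem~\ref{thm:rt354-domination-left-ce}, and the subsequent ``climbing the $k$ levels'' has nothing to climb on: you need thinness at level $j$ \emph{and} the oracle $\emptyset^{(j)}$ to get $\emptyset^{(j+1)}$, and your argument supplies neither. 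The product coloring is exactly what makes the bound $|f[H]^n| < d_n$ propagate to all levels at once, enabling the bootstrap.
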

\begin{proof}
  Fix $n$ and $A$.  Since $A$ is arithmetical, it is $\Delta^0_{k+1}$
  for some $k \in \omega$.  Let $\Gamma$ be a functional such that for
  every set $X$, $\Gamma^X$ is a left-c.e.\ modulus of $X'$ relative
  to $X$, that is, for every function $g$ dominating $\Gamma^X$,
  $g \oplus X \geq_T X'$.

  Let $\Psi_n$ be the functional such that
  $\Psi_n^X(x_0, \dots, x_{n-1})$ is the largeness coloring of
  $(x_0, \dots, x_{n-1})$ defined by setting $\{x_i, x_{i+1}\}$ to be
  $\ell$ if $[x_i, x_{i+1}]$ is $\Gamma^X$-large, and
  $\{x_i, x_{i+1}\}$ is $s$ otherwise. Moreover, $\{x_i, x_{i+2}\}$
  has color $\ell$ if either $[x_i,x_{i+1}]$ is $\Gamma^X$-large, or
  $[x_i, x_{i+1}]$ is $x_{i+2}$-$\Gamma^X$-small, and
  $\{x_i, x_{i+2}\}$ has color $s$ otherwise.

  By a relativization of the proof of
  Theorem~\ref{thm:rt354-domination-left-ce}, for every
  $\Psi^X_n$-thin set $H$, $H \oplus X$ computes a function dominating
  $\Gamma^X$, hence computes $X'$. Let
$$
f_n(x_0, \dots, x_{n-1}) = \langle \Psi^\emptyset_n(x_0, \dots,
x_{n-1}), \dots, \Psi^{\emptyset^{(k-1)}}_n(x_0, \dots, x_{n-1})
\rangle
$$
See $f_n$ as an instance of $\rt^n_{<\infty, d_n-1}$. Let $H$ be an
infinite set such that $|f_n[H]^n| \leq d_n-1$. In particular, $H$ is
$\Psi^\emptyset_n$-thin, so $H \geq_T \emptyset'$. Moreover, $H$ is
$\Psi^{\emptyset'}_n$-thin, so
$H \oplus \emptyset' \geq_T \emptyset''$, hence
$H \geq_T \emptyset''$. By iterating the argument,
$H \geq_T \emptyset^{(k)}$, hence $H \geq_T A$. This completes the
proof.
\end{proof}

\section{The weakness of the thin set
  theorems}\label{sect:weakness-ts}

Wang~\cite{Wang2014Some} proved that $\rt^n_{<\infty,\ell}$ admits
strong cone avoidance whenever $\ell$ is at least the $n$th Schr\"oder
number, and asked whether this bound is optimal. In this section, we
answer negatively this question and prove that the exact bound
corresponds to Catalan numbers. We also prove the tightness of Dorais
et al.~\cite{Dorais2016uniform} by proving that $\rt^n_{<\infty,\ell}$
admits strong cone avoidance for non-arithmetical cones whenever
$\ell \geq 2^{n-1}$.


\begin{definition}
  Let $\Dec_n$ be the set of all strictly decreasing non-empty
  sequences over $\{1, \dots, n-1\}$.  Given some $\sigma \in \Dec_n$,
  we let $\sigma^{+}$ be its last (smallest) element, and $\sigma^{-}$
  be the sequence truncated by its last element. If $|\sigma| = 1$,
  then $\sigma^{-}$ is the empty sequence $\epsilon$.  By convention,
  we also define $\varepsilon^{+} = n$.
\end{definition}

By a simple counting argument, $|\Dec_n| = 2^{n-1}-1$. Indeed, the
strictly decreasing non-empty sequences over $\{1, \dots, n-1\}$ are
in one-to-one correspondance with the non-empty subsets of
$\{1, \dots, n-1\}$.

We will use the set of decreasing sequences in the proof of
Theorem~\ref{thm:grtn-strong-cone-avoidance-direct}. See the
explanations before
Definition~\ref{def:grtn-strong-cone-avoidance-direct-index}.  We also
need the following technical definition which will be used in
Lemma~\ref{lem:grtn-thinout-compatible}.

\begin{definition}\label{def:inductive-size-colors}
  Fix $n \in \omega$ and a vector
  $\vec{\ell} = \langle \ell_1, \ell_2, \dots, \ell_n \rangle$ of
  natural numbers.  Given some $\sigma = n_0n_1\dots n_s \in \Dec_n$,
  let
  $\vec{\ell}(n,\sigma) = \ell_{n-n_0} \cdot \prod_{i = 1}^s
  \ell_{n_{i-1}-n_i}$
  By convention,
  $\vec{\ell}(n, \varepsilon) = 1$.
\end{definition}


\begin{theorem}\label{thm:grtn-strong-cone-avoidance-direct}
  Fix $n \geq 1$, and let $\Mcal$ be a countable Scott set such that
$$
(\forall s \in \{1, \dots, n\})(\exists \ell_s)\Mcal \models
\rt^s_{<\infty, \ell_s}
$$
Let
$$
\ell = \ell_n + \sum_{\sigma \in \Dec_n} \vec{\ell}(n,\sigma) \cdot
\ell_{\sigma^{+}}
$$
For every $B \not \in \Mcal$ and every instance $f : [\omega]^n \to k$
of $\rt^n_{<\infty, \ell}$, there is a solution $G$ such that for
every $C \in \Mcal$, $B \not \leq_T G \oplus C$.
\end{theorem}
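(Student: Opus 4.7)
The plan is to run a Mathias-style forcing inside the countable Scott set $\Mcal$, building $G$ as a generic. A condition will be a triple $(F, X, \mathbf{c})$ where $F$ is a finite set committed as an initial segment of $G$, $X \in \Mcal$ is an infinite reservoir with $\max F < \min X$, and $\mathbf{c} \in \Mcal$ is a \emph{color-trace} recording, for each $s < n$ and each $s$-subset $\vec u \subseteq F$, a finite set $C_{\vec u}$ of at most $\ell_{n-s}$ colors with $f_{\vec u}[X]^{n-s}\subseteq C_{\vec u}$, together with a set $C_\varepsilon$ of at most $\ell_n$ colors bounding $f[X]^n$. The trace is essential because the raw coloring $f$ need not lie in $\Mcal$: once $\mathbf{c}$ is in hand, one can invoke $\Mcal \models \rt^{n-s}_{<\infty, \ell_{n-s}}$ on the coloring $f_{\vec u}$ viewed (inside $\Mcal$) as a coloring into $|C_{\vec u}|$ many colors, obtaining a further thinning of $X$ inside $\Mcal$. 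Closure of $\Mcal$ under Turing join and under infinite-subset selection from $\Pi^{0,X}_1$ subtrees keeps all the bookkeeping inside $\Mcal$.

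The main combinatorial lemma (Lemma~\ref{lem:grtn-thinout-compatible}) will assert that every valid condition admits, for any finite extension target, a valid extension adjoining a new block to $F$ and correspondingly refining $X$ and $\mathbf{c}$. Granting this, I bound $|f[G]^n|$ by classifying each $\vec x \in [G]^n$ by its \emph{generation signature}: if the elements of $\vec x$ entered $F$ across $s+2$ distinct stages and $n_i$ of them entered strictly after the $(i+1)$-th such stage, then $\sigma(\vec x) := \langle n_0, n_1, \dots, n_s \rangle \in \Dec_n$; if all entered at one stage, set $\sigma(\vec x):=\varepsilon$. For $\sigma(\vec x)=\varepsilon$ the color lies in the single set $C_\varepsilon$ active at that stage, giving at most $\ell_n$ colors overall. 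For $\sigma(\vec x)=\sigma\in \Dec_n$, telescoping the traces across the intermediate stages — at the $i$-th stage the block of size $n_{i-1}-n_i$ just added introduces at most $\ell_{n_{i-1}-n_i}$ colors per fixed prior configuration, and the final block of size $n_s$ then ranges over at most $\ell_{n_s}$ colors — yields at most
\[
\ell_{n-n_0}\cdot\prod_{i=1}^{s}\ell_{n_{i-1}-n_i}\cdot\ell_{n_s}=\vec{\ell}(n,\sigma)\cdot\ell_{\sigma^{+}}
\]
colors of signature $\sigma$, matching Definition~\ref{def:inductive-size-colors}. Summing over $\sigma \in \Dec_n$ together with the $\ell_n$ term for $\varepsilon$ yields the bound $\ell$.

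Cone-avoidance proceeds in parallel by the standard forcing trichotomy. Enumerate all pairs $(\Phi_e, C_e)$ with $C_e \in \Mcal$; at stage $e$ one either extends the current condition to a valid one forcing $\Phi_e^{G\oplus C_e}(x)\halts \ne B(x)$ for some $x$, or else the $\Pi^{0,C_e\oplus \mathbf{c}_e}_1$ predicate ``there exists a valid extension forcing $\Phi_e^{G\oplus C_e}(x)=v$'' singles out a unique $v$ at every $x$, and hence $C_e$-computes $B$, contradicting $B \notin \Mcal$. The chief obstacle I anticipate is calibrating the notion of valid extension (Definition~\ref{def:grtn-strong-cone-avoidance-direct-index}) so that (i) the existence of such extensions is expressible inside $\Mcal$ from $C_e$ and the current trace, and (ii) those extensions simultaneously respect the color budget expressed through the $\Dec_n$-decomposition. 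This is precisely why the conditions carry a trace stratified by $\Dec_n$-signatures: $\Dec_n$ indexes exactly the histories along which color-multiplication can occur, so that each trace component is controllable by a single application of $\rt^{s}_{<\infty, \ell_s}$ for some $s\leq n$ inside $\Mcal$, and the compatibility of these applications with the cone-avoidance extensions can be verified uniformly.
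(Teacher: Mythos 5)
There is a genuine gap, and it lies at the heart of your notion of condition. You require the reservoir $X \in \Mcal$ to come with a trace $\mathbf{c}$ such that $f_{\vec u}[X]^{n-s} \subseteq C_{\vec u}$ with $|C_{\vec u}| \leq \ell_{n-s}$ (and $f[X]^n \subseteq C_\varepsilon$ with $|C_\varepsilon| \leq \ell_n$). But this is strong cone avoidance, so $f$ is an \emph{arbitrary} coloring, in general not in $\Mcal$; the induced colorings $f_{\vec u}$ are therefore not in $\Mcal$ either, and $\Mcal \models \rt^{n-s}_{<\infty,\ell_{n-s}}$ can only be applied to instances that are members of $\Mcal$. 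Knowing a finite color bound $C_{\vec u}$ does not put $f_{\vec u}$ inside the model. Worse, the existence of even a single nontrivial condition already amounts to the conclusion you are trying to prove (and more): asking for an infinite $X \in \Mcal$ with $f[X]^n$ confined to $\ell_n$ colors is asking that a member of $\Mcal$ solve an instance outside $\Mcal$, and one can diagonalize against the countably many infinite sets of $\Mcal$ to build $f$ for which no such $X$ exists. The same defect propagates into your cone-avoidance step: the predicate ``there exists a valid extension forcing $\Phi_e^{G\oplus C_e}(x)=v$'' quantifies over extensions whose validity is defined via $f$, so it is not $\Pi^{0,C_e\oplus \mathbf{c}_e}_1$ and the failure case does not yield a $C_e$-computation of $B$ inside $\Mcal$.

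The paper's proof is organized precisely to avoid this trap. Its conditions impose \emph{no} thinness requirement on the reservoir with respect to $f$; instead, the unknown limit behaviors of $f$ (one partial limit for each $\sigma \in \Dec_n$) are \emph{guessed}, with an index set $I$ covering all possible guesses $\vec g$, and separate stems $F^K_{\vec g}$ are maintained for every branch $\vec g \in I$ and every candidate color set $K \in \Pcal_\ell(k)$ --- $\binom{k}{\ell}$ simultaneous solutions per branch, since the right $K$ cannot be known in advance. The model's instances of $\rt^s_{<\infty,\ell_s}$ and of $\wkl$ are applied only to objects genuinely in $\Mcal$: in the $\Pi^0_1$-forcing case, $\wkl$ inside $\Mcal$ produces a tree of candidate limit functions $h_\sigma \in \Mcal$, Lemma~\ref{lem:grtn-thinout-compatible} thins the reservoir against these $h_\sigma$ (not against $f$), and only then is the color budget $\ell = \ell_n + \sum_{\sigma\in\Dec_n}\vec{\ell}(n,\sigma)\,\ell_{\sigma^+}$ realized by choosing some $K$ containing the union of the resulting limit-color sets; a final pigeonhole argument over the finitely many $K$ selects one branch whose stem is both infinite and cone-avoiding. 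Your $\Dec_n$-signature count of $|f[G]^n|$ has the right arithmetic shape, but the mechanism that is supposed to enforce it (the trace on the reservoir) is exactly what cannot be had; you would need to rebuild your argument around guessed limit colorings and multiple candidate stems, at which point it becomes the paper's proof.
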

\begin{proof}
  Fix $n$, $\Mcal$, $B$, and $f$.

  Given two sets $A$ and $B$, we write $A \subseteq_n B$ for
  $A \subseteq B$ and $|A| \leq n$.  We identify an integer
  $k \in \omega$ with the set $\{0, \dots, k-1\}$.  We want to
  construct an infinite set $G$ such that $A \not \leq_T G \oplus C$
  for every $C \in \Mcal$, and $f[G]^n \subseteq_\ell k$.  Suppose
  there is no such set, otherwise we are done.  We are going to build
  our set $G$ by forcing.

  Let us illustrate the general idea in the case $n = 3$ with a
  function $f : [\omega]^3 \to k$. We write $\Pcal_s(X)$ for the
  collection of all finite subsets of $X$ of size $s$. Our goal is to
  build an infinite set $G$ such that $f[G]^3$ will use at most $\ell$
  colors.  For this, we will use a variant of Mathias forcing with
  conditions of the form $(F^K, X : K \in \Pcal_\ell(k))$. Here, we
  build simultaneously $|\Pcal_\ell(k)| = {k \choose \ell}$ many
  solutions. For each $K \in \Pcal_\ell(k)$, $F^K$ represents a finite
  stem of a solution $G^K$ such that $f[G^K]^3 \subseteq K$. We will
  ensure that at least one of $G^K : K \in \Pcal_\ell(k)$ will be
  infinite and be cone avoiding. The set $X$ is a shared reservoir
  from which all the future elements of $F^K$ will come. During the
  construction, the reservoir $X$ will become more and more
  restrictive, so that $\Pi^0_1$ facts about the constructed solution
  can be forced.

  We therefore require a condition
  $c = (F^K, X : K \in \Pcal_\ell(k))$ to satisfy the following
  property:
  \begin{quote}
    (1): For every $K \in \Pcal_\ell(k)$, $f[F^K]^3 \subseteq K$.
  \end{quote}
  However, this property is not enough to ensure that the stems are
  extendible.  Indeed, given a finite set $E \subseteq X$ satisfying
  again $f[E]^3 \subseteq K$, it may not be the case that
  $f[F^K \cup E]^3 \subseteq K$. A bad case is when there is some
  $x \in F^K$ such that for every $y, z \in X$,
  $f(x, y, z) \not \in K$. Another bad case is when for some
  $x, y \in F^K$, for every $z \in X$, $f(x, y, z) \not \in K$. We
  therefore need to strengthen the property (1). We would therefore
  want add the following properties:
  \begin{quote}
    (2.1): For every $K \in \Pcal_\ell(k)$, every $x, y \in F^K$, $\lim_z f(x, y, z) \in K$.\\
    (2.2): For every $K \in \Pcal_\ell(k)$, every $x \in F^K$,
    $\lim_y \lim_z f(x, y, z) \in K$.
  \end{quote}
  Properties (2.1) and (2.2) are in charge of propagating property
  (1), in that if $E \subseteq X$ is such that $f[E]^3 \subseteq K$,
  then $f[F^K \cup E]^3 \subseteq K$. There is however an issue: there
  is no reason to believe the function $f$ admits a limit. By Ramsey's
  theorem, we know there is a restriction $Y$ of the reservoir $X$
  over which $f$ admits a limit, but we cannot ensure that
  $Y \in \Mcal$. We will therefore have to \qt{guess} every
  possibility of limiting behavior of the function $f$. The limiting
  behavior of the function $f$ can be specified by two functions
  $g_2 : [\omega]^2 \to k$ and $g_1 : \omega \to k$, which informally
  should satisfy the equations $g_2(x, y) = \lim_z f(x, y, z)$ and
  $g_1(x) = \lim_y \lim_z f(x, y, z)$. The equations become:
  \begin{quote}
    (2.1): For every $K \in \Pcal_\ell(k)$, every $x, y \in F^K$, $g_2(x, y) \in K$.\\
    (2.2): For every $K \in \Pcal_\ell(k)$, every $x \in F^K$,
    $g_1(x) \in K$.
  \end{quote}
  Since we don't know the functions $g_2$ and $g_1$ ahead of time, we
  will need to try every possibility. Therefore, the notion of forcing
  becomes
  $(F^K_{g_2,g_1}, X : K \in \Pcal_\ell(k), g_2 : [\omega]^2 \to k,
  g_1 : \omega \to k)$.  A condition must satisfy the following
  properties for every $K \in \Pcal_\ell(k)$, every
  $g_2 : [\omega]^2 \to k$, and every $g_1 : \omega \to k$:
  \begin{quote}
    (1): $f[F^K_{g_2, g_1}]^3 \subseteq K$\\
    (2.1): For every $x, y \in F^K_{g_2, g_1}$, $g_2(x, y) \in K$.\\
    (2.2): For every $x \in F^K_{g_2,g_1}$, $g_1(x) \in K$.
  \end{quote}
  As explained, properties (2.1) and (2.2) are useful to propagate
  Property (1). However, we will encounter a similar issue of the
  propagation of Property (2.1): Suppose that there is some
  $x \in F^K_{g_2, g_1}$ such that for every $y \in X$,
  $g_2(x, y) \not \in K$. Then the property (2.1) will never be
  satisfied for $F^K_{g_2,g_1}$ for any set $E \subseteq X$. We need
  to also consider the limiting behavior of the function $g_2$. It is
  specified by a function $g_{2,1} : \omega \to k$. The notion of
  forcing becomes
  $(F^K_{g_2,g_{2,1},g_1}, X : K \in \Pcal_\ell(k), g_2 : [\omega]^2
  \to k, g_{2,1} : \omega \to k, g_1 : \omega \to k)$.  A condition
  must satisfy the following properties for every
  $K \in \Pcal_\ell(k)$, every $g_2 : [\omega]^2 \to k$, every
  $g_{2,1} : [\omega]^2 \to k$ and every $g_1 : \omega \to k$:
  \begin{quote}
    (1): $f[F^K_{g_2,g_{2,1}, g_1}]^3 \subseteq K$\\
    (2.1): For every $x, y \in F^K_{g_2,g_{2,1}, g_1}$, $g_2(x, y) \in K$.\\
    (2.1.1): For every $x \in F^K_{g_2,g_{2,1}, g_1}$, $g_{2,1}(x) \in K$.\\
    (2.2): For every $x \in F^K_{g_2,g_{2,1},g_1}$, $g_1(x) \in K$.\\
  \end{quote}
  Thus, Property (2.1.1) is necessary to propagate Property (2.1), and
  Properties (2.1) and (2.2) are necessary for Property (1).

  Actually, for technical reasons appearing in
  Lemma~\ref{lem:grtn-thinout-compatible}, given a function
  $f : [\omega]^3 \to k$, we will define its limit behavior $g_2$ on
  $a < b$ by applying $\rt^{3-2}_{<\infty, \ell_{3-2}}$ to the
  function $c \mapsto f(a, b, c)$ and let $g_2(a, b)$ be the resulting
  set of colors. Therefore, $g_2(a, b)$ will not be a single limit
  color, but a set of colors of size $\ell_{3-2}$. Thus $g_2$ has type
  $[\omega]^2 \to \Pcal_{\ell_{3-2}}(k)$. Similarly, $g_1$ will be
  defined on $a$ by applying $\rt^{3-1}_{<\infty, \ell_{3-1}}$ to the
  function $(b, c) \mapsto f(a, b, c)$ and let $g_1(a)$ be the limit
  set of colors of size $\ell_{3-1}$. So $g_1$ has type
  $\omega \to \Pcal_{\ell_{3-1}}$. Last, $g_{2,1}$ is now defining the
  limit behavior of the function
  $g_2 : [\omega] \to \Pcal_{\ell_{3-2}}(k)$. It will be defined on
  input $a$ by applying $\rt^{2-1}_{<\infty, \ell_{2-1}}$ to the
  function $b \mapsto g_2(a, b)$. Then, we get $\ell_{2-1}$ many
  values of $g_2$. However, the values of $g_2$ are also sets of
  colors of size $\ell_{3-2}$. Therefore, $g_{2,1}(a)$ will collect a
  set of colors of size $\ell_{3-2}\times \ell_{2-1}$. Thus $g_{2,1}$
  is of type $\omega \to \Pcal_{\ell_{3-2}\times
    \ell_{2-1}}(k)$. Notice that the number of colors corresponds to
  Definition~\ref{def:inductive-size-colors}. The notion of forcing
  becomes
  $(F^K_{g_2,g_{2,1},g_1}, X : K \in \Pcal_\ell(k), g_2 : [\omega]^2
  \to \Pcal_{\ell_1}(k), g_{2,1} : \omega \to \Pcal_{\ell_1}(k), g_1 :
  \omega \to \Pcal_{\ell_1}(k))$.  A condition must satisfy the
  following properties for every $K \in \Pcal_\ell(k)$, every
  $g_2 : [\omega]^2 \to \Pcal_{\ell_1}(k)$, every
  $g_{2,1} : [\omega]^2 \to \Pcal_{\ell_1}(k)$ and every
  $g_1 : \omega \to \Pcal_{\ell_1}(k)$:
  \begin{quote}
    (1): $f[F^K_{g_2,g_{2,1}, g_1}]^3 \subseteq K$\\
    (2.1): For every $x, y \in F^K_{g_2,g_{2,1}, g_1}$, $g_2(x, y) \subseteq K$.\\
    (2.1.1): For every $x \in F^K_{g_2,g_{2,1}, g_1}$, $g_{2,1}(x) \subseteq K$.\\
    (2.2): For every $x \in F^K_{g_2,g_{2,1},g_1}$, $g_1(x) \subseteq K$.\\
  \end{quote}

  Last, we can use compactness to the definition of a
  condition. Indeed, if for every
  $g_2 : [\omega]^2 \to \Pcal_{\ell_1}(k)$, every
  $g_{2,1} : [\omega]^2 \to \Pcal_{\ell_1}(k)$ and every
  $g_1 : \omega \to \Pcal_{\ell_1}(k)$, we can find a tuple
  $\langle F^K_{g_2,g_{2,1}, g_1} : K \in \Pcal_\ell(k) \rangle$
  satisfying properties (1), (2.1), (2.1.1) and (2.2), then we can
  find finitely many such tuples covering all the possible functions
  $g_2, g_{2,1}$ and $g_1$. Therefore, a condition becomes a tuple
  $(F^K_{g_2,g_{2,1},g_1}, X : K \in \Pcal_\ell(k), g_2 : [\{0, \dots,
  p-1\}]^2 \to \Pcal_{\ell_1}(k), g_{2,1} : \{0,\dots, p-1\} \to
  \Pcal_{\ell_1}(k), g_1 : \{0,\dots, p-1\} \to \Pcal_{\ell_1}(k))$
  satisfying properties (1), (2.1), (2.1.1) and (2.2).

  The exact computation of the size $\ell$ of the set $K$ appears in
  Case 2 of Lemma~\ref{lem:grtn-sca-direct-progress}, in order to
  force $\Pi^0_1$ facts.

\begin{definition}\label{def:grtn-strong-cone-avoidance-direct-index}
  Let $\Ib^{<\omega}$ be the set of all tuples
  $\vec{g} = \langle g_\sigma : \sigma \in \Dec_n \rangle$ such that
  for every $\sigma \in \Dec_n$, $g_\sigma$ is a function of type
  $[\{0, \dots, p-1\}]^{\sigma^{+}} \to
  \Pcal_{\vec{\ell}(n,\sigma)}(k)$ for some $p \in \omega$. We then
  let the \emph{height} of $\vec{g}$ be $\height(\vec{g}) = p$.  Let
  $\Ib^\omega$ be the set of all tuples
  $\vec{h} = \langle h_\sigma : \sigma \in \Dec_n \rangle$ such that
  for every $\sigma \in \Dec_n$, $h_\sigma$ is a function of type
  $[\omega]^{\sigma^{+}} \to \Pcal_{\vec{\ell}(n,\sigma)}(k)$.

  Given some
  $\vec{h} = \langle h_\sigma : \sigma \in \Dec_n \rangle \in
  \Ib^{<\omega} \cup \Ib^\omega$ and
  $\vec{g} = \langle g_\sigma : \sigma \in \Dec_n \rangle \in
  \Ib^{<\omega}$, we write $\vec{h} \leq \vec{g}$ if
  $g_\sigma \subseteq h_\sigma$ for each $\sigma \in \Dec_n$.

  An \emph{index set} is a finite set $I \subseteq \Ib^{<\omega}$ such
  that for every $\vec{h} \in \Ib^\omega$, there is a tuple
  $\vec{g} \in I$, such that $\vec{h} \leq \vec{g}$. Given two index
  sets $I, J$, we write $J \leq I$ if for every $\vec{h} \in J$, there
  is some $\vec{g} \in I$ such that $\vec{h} \leq \vec{g})$. The
  \emph{height} of an index $I$ is
  $\height(I) = \max \{ \height(\vec{g}) : \vec{g} \in I \}$.
\end{definition}

\begin{definition}\label{def:grtn-strong-cone-avoidance-direct-forcing}
  A \emph{condition} is a tuple
  $(F^K_{\vec{g}}, X : K \in \Pcal_\ell(k), \vec{g} \in I)$ such that,
  letting $g_\epsilon = \vec{x} \mapsto \{f(\vec{x})\}$,
  \begin{itemize}
  \item[(a)] $I$ is an index set
  \item[(b)] $g_\sigma(\vec{x}) \subseteq K$ for each
    $K \in \Pcal_\ell(k)$, $\sigma \in \Dec_n \cup \{\epsilon\}$,
    $\vec{g} \in I$ and $\vec{x} \in [F^K_{\vec{g}}]^{\sigma^{+}}$
  \item[(c)] $X \in \Mcal$ is an infinite set with $\min X > h(I)$
  \end{itemize}
\end{definition}
Note that the reservoir $X$ is shared with all the stems
$F^K_{\vec{g}}$. We refer to $\vec{g}$ as a \emph{branch} of the
condition $c$. Each branch can be seen as specifying
${k \choose \ell}$ simultaneous Mathias conditions
$(F^K_{\vec{g}}, X)$ for each $K \in \Pcal_\ell(k)$.  Also note that,
letting $\sigma = \epsilon$, we require that
$f[F^K_{\vec{g}}]^n \subseteq K$ for each $\vec{g} \in I$.

\begin{definition}
  A condition
  $d = (E^K_{\vec{h}}, Y : K \in \Pcal_\ell(k), \vec{h} \in J)$
  \emph{extends} a condition
  $c = (F^K_{\vec{g}}, X : K \in \Pcal_\ell(k), \vec{g} \in I)$
  (written $d \leq c$) if $J \leq I$, $Y \subseteq X$, and for each
  $K \in \Pcal_\ell(k)$ and $\vec{h} \in J$ and $\vec{g} \in I$ such
  that $\vec{h} \leq \vec{g}$, $F^K_{\vec{g}} \subseteq E^K_{\vec{h}}$
  and $E^K_{\vec{h}} \setminus F^K_{\vec{g}} \subseteq X$.
\end{definition}



\begin{definition}\label{def:grtn-compatibility}
  Let $\vec{h} \in \Ib^{\omega}$ and $\vec{g} \in \Ib^{<\omega}$ be
  such that $\vec{h} \leq \vec{g}$. Let
  $h_\epsilon : [\omega]^n \to \Pcal_1(k)$.  A set
  $F > \height(\vec{g})$ is \emph{$(h_\epsilon, \vec{h})$-compatible
    with $\vec{g}$} if for every $\sigma \in \Dec_n$, letting
  $\tau = \sigma^{-}$, every $\vec{x} \in \dom g_\sigma$ and every
  $\vec{y} \in [F]^{\tau^{+} - \sigma^{+}}$, then
  $g_\sigma(\vec{x}) \supseteq h_\tau(\vec{x},\vec{y})$.
\end{definition}

The notion of $(h_\epsilon,\vec{h})$-compatibility has been designed
so that one can join two sets $F$ and $E$ satisfying property (b) of a
forcing condition, and obtain a set $F \cup E$ still satisfying
property (b), as proven in Lemma~\ref{lem:grtn-compatible-union}.

\begin{lemma}\label{lem:grtn-compatible-union}
  Let $\vec{h} \in \Ib^{\omega}$ and $\vec{g} \in \Ib^{<\omega}$ be
  such that $\vec{h} \leq \vec{g}$. Let $h_\epsilon = g_\epsilon$ be a
  function of type $[\omega]^n \to \Pcal_1(k)$.  Let $F$ and $E$ be
  two sets such that $F < \height(\vec{g}) < E$ and
  \begin{itemize}
  \item[(a)] $g_\sigma(\vec{x}) \subseteq K$ for each
    $\sigma \in \Dec_n \cup \{\epsilon\}$ and
    $\vec{x} \in [F]^{\sigma^{+}}$
  \item[(b)] $h_\sigma(\vec{x}) \subseteq K$ for each
    $\sigma \in \Dec_n \cup \{\epsilon\}$ and
    $\vec{x} \in [E]^{\sigma^{+}}$
  \item[(c)] $E$ is $(h_\epsilon,\vec{h})$-compatible with $\vec{g}$
  \end{itemize}
  Then $h_\sigma(\vec{x}) \subseteq K$ for each
  $\sigma \in \Dec_n \cup \{\epsilon\}$ and
  $\vec{x} \in [F \cup E]^{\sigma^{+}}$.
\end{lemma}
\begin{proof}
  Let $\sigma \in \Dec_n \cup \{\epsilon\}$ and $p =
  \sigma^{+}$. Recall that by convention, $\epsilon^{+} = n$.  We show
  that $h_\sigma(\vec{x}) \subseteq K$ for each
  $\vec{x} \in [F \cup E]^p$.  Let
  $\vec{x} = \{x_0, \dots, x_{p-1}\} \in [F \cup E]^p$, with
  $x_0 < \dots < x_{p-1}$.  We have three cases.  Case 1:
  $x_{p-1} \in F$. Then $\{x_0, \dots, x_{p-1}\} \in [F]^p$, and by
  (a),
  $h_\sigma(x_0, \dots, x_{p-1}) = g_\sigma(x_0, \dots, x_{p-1})
  \subseteq K$. Case 2: $x_0 \in F$. Then
  $\{x_0, \dots, x_{p-1}\} \in [E]^p$, and by (b),
  $h_\sigma(x_0, \dots, x_{p-1}) \subseteq K$. Case 3: there is some
  $i \in \{1, \dots, p-1\}$ such that $x_{i-1} \in F$ and $x_i \in
  E$. Let $\tau = \sigma^\frown i$. Since $0 < i < p$, then
  $\tau \in \Dec_n$. By (a),
  $g_\tau(x_0, \dots, x_{i-1}) \subseteq K$. Since
  $\sigma = \tau^{-}$, then by (c),
  $g_\tau(x_0, \dots, x_{i-1}) \supseteq h_\sigma(x_0, \dots,
  x_{p-1})$.  Hence $h_\sigma(x_0, \dots, x_{p-1}) \subseteq K$.
\end{proof}

\begin{definition}
  Let $\prec_L$ be a linearization of the prefix order $\prec$ on
  $\Dec_n$.  We have
  $\sigma_0 \prec_L \sigma_1 \prec_L \dots \prec_L
  \sigma_{2^{n-1}-2}$.  Given $p \in \omega$, let $\Tb_p$ be the set
  of all sequences
  $\langle g_{\sigma_0}, g_{\sigma_1}, \dots, g_{\sigma_s}\rangle$ for
  some $s < 2^{n-1}-1$, such that
  $g_{\sigma_i} : [p]^{\sigma_i^{+}} \to
  \Pcal_{\vec{\ell}(n,\sigma_i)}(k)$.  The empty sequence
  $\langle \rangle$ also belongs to $\Tb_p$.  The set $\Tb_p$ is
  naturally equipped with a partial order $\leq_{\Tb_p}$ corresponding
  to the prefix relation.

  A \emph{$\Tb_p$-tree} is a function $S$ whose domain is $\Tb_p$, and
  such that
  $S(\langle g_{\sigma_0}, g_{\sigma_1}, \dots, g_{\sigma_s}\rangle)$
  is a function
  $h_{\sigma_s} : [\omega]^{\sigma_s^{+}} \to
  \Pcal_{\vec{\ell}(n,\sigma_i)}(k)$. By convention,
  $S(\langle \rangle)$ is a function
  $h_\epsilon : [\omega]^n \to \Pcal_1(k)$.
\end{definition}

Note that the maximal sequences in $\Tb_p$ are precisely the
$\vec{g} \in \Ib^{<\omega}$ such that $\height(\vec{g}) = p$. In some
sense, $\Tb_p$ is the downward closure of such $\vec{g}$ under the
$\prec_L$ relation.  The following lemma justifies the combinatorial
design of the notion of forcing.

\begin{lemma}\label{lem:grtn-thinout-compatible}
  Let $S \in \Mcal$ be a $\Tb_p$-tree and $X \in \Mcal$ be an infinite
  set with $X > p$.  Then there is an infinite subset $Y \subseteq X$
  in $\Mcal$ and some $\vec{g} \in \Ib^{<\infty}$ with
  $\height(\vec{g}) = p$, such that, letting
  $h_\epsilon = S(\langle\rangle)$ and
  $\vec{h} = \langle S(\xi) : \xi \leq_{\Tb_p} \vec{g} \rangle$, $Y$
  is $(h_\epsilon, \vec{h})$-compatible with~$\vec{g}$.
\end{lemma}
\begin{proof}
  Fix
  $\sigma_0 \prec_L \sigma_1 \prec_L \dots \prec_L
  \sigma_{2^{n-1}-2}$.  We define inductively two sequences
  \begin{itemize}
  \item[(a)]
    $X = X_0 \supseteq X_1 \supseteq \dots \supseteq X_{2^{n-1}-1}$
    where $X_i \in \Mcal$ is an infinite set
  \item[(b)]
    $g_{\sigma_0}, g_{\sigma_1}, \dots, g_{\sigma_{2^{n-1}-2}}$ where
    $g_{\sigma_i} : [p]^{\sigma_i^{+}} \to
    \Pcal_{\vec{\ell}(n,\sigma_i)}(k)$
  \end{itemize}
  This induces a sequence
  $h_\epsilon, h_{\sigma_0}, h_{\sigma_1}, \dots,
  h_{\sigma_{2^{n-2}}}$ defined by $h_\epsilon = S(\langle \rangle)$
  and
  $h_{\sigma_i} = S(\langle g_{\sigma_0}, \dots,
  g_{\sigma_i}\rangle)$.  Note that $h_{\sigma_i} \in \Mcal$ is a
  function of type
  $[\omega]^{\sigma_i^{+}} \to \Pcal_{\vec{\ell}(n,\sigma_i)}(k)$.

  At step $i < 2^{n-1}-1$, we have already defined an infinite set
  $X_i \in \Mcal$ and the functions $g_{\sigma_j}$ and $h_{\sigma_j}$
  for every $j < i$.  Let $\tau = \sigma_i^{-}$, $a = \sigma_i^{+}$
  and $b = \tau^{+}$. Since $\prec_L$ is a linearization of $\prec$,
  the functions $g_\tau : [p]^b \to \Pcal_{\vec{\ell}(n,\tau)}(k)$ and
  $h_\tau : [\omega]^b \to \Pcal_{\vec{\ell}(n,\tau)}(k)$ are already
  defined.

  For each tuple $\{x_0, \dots, x_{a-1}\} \in [p]^a$, we can apply
  $\rt^{b-a}_{<\infty,\ell_{b-a}}$ to
  $x_a, \dots, x_{b-1} \mapsto h_\tau(x_0, \dots, x_{b-1})$ on the
  domain $X_i$ to obtain a set of colors
  $C \in \Pcal_{\ell_{b-a}}(\Pcal_{\vec{\ell}(n,\tau)}(k))$ and an
  infinite set $Y \subseteq X_i$ with $Y \in \Mcal$ such that
$$
(\forall \{x_a, \dots, x_{b-1}\} \in [Y]^{b-a})h_\tau(x_0, \dots,
x_{b-1}) \in C
$$
Let $g_{\sigma_i}(x_0, \dots, x_{a-1}) = \bigcup C$. Note that
$|C| = \ell_{b-a}$ and that each element of $C$ has size
$\vec{\ell}(n,\tau)$, so
$|\bigcup C| = \vec{\ell}(n,\tau) \cdot \ell_{b-a} =
\vec{\ell}(n,\sigma_i)$.  By applying the operation iteratively for
each tuple in $[p]^a$, we obtain an infinite set
$X_{i+1} \subseteq X_i$ in $\Mcal$ and a function
$g_{\sigma_i} : [p]^a \to \Pcal_{\vec{\ell}(n,\sigma_i)}(k)$ such that
for every $\{x_0, \dots, x_{a-1}\} \in [p]^a$ and
$\{x_a, \dots, x_{b-1}\} \in [X_{i+1}]^{b-a}$,
$$
g_{\sigma_i}(x_0, \dots, x_{a-1}) \supseteq h_\tau(x_0, \dots,
x_{b-1})
$$
We then go to the next step.  At the end of the construction, we
obtain an infinite set $X_{2^{n-1}-1}$ and some
$\vec{g} \in \Ib^{<\omega}$ satisfying the statement of our lemma.
\end{proof}

\begin{definition}
  Let $c = (F^K_{\vec{g}}, X : K \in \Pcal_\ell(k) , \vec{g} \in I)$
  be a condition and $\varphi(G, x)$ be a $\Delta^0_0$ formula.
  \begin{itemize}
  \item[(a)] $c \Vdash^K_{\vec{g}} (\exists x)\varphi(G, x)$ if there
    is some $x \in \omega$ such that $\varphi(F^K_{\vec{g}}, x)$ holds
  \item[(b)] $c \Vdash^K_{\vec{g}} (\forall x)\varphi(G, x)$ if for
    every $x \in \omega$, every $E \subseteq X$,
    $\varphi(F^K_{\vec{g}} \cup E, x)$ holds.
  \end{itemize}
\end{definition}

Note that the forcing relation for $\Pi^0_1$ formulas seems too strong
since no color restrain is imposed on the tuples over $E$.

\begin{definition}
  Let $c = (F^K_{\vec{g}}, X : K \in \Pcal_\ell(k) , \vec{g} \in I)$
  and $d = (E^K_{\vec{h}}, Y : K \in \Pcal_\ell(k), \vec{h} \in J)$ be
  conditions.  We say that $d$ is an \emph{$R$-extension} of $c$ for
  some $R \subseteq J$ if $R \neq \emptyset$ and $J - I \subseteq R$.
\end{definition}

\begin{lemma}\label{lem:grtn-sca-direct-progress}
  Let $c = (F^K_{\vec{g}}, X : K \in \Pcal_\ell(k), \vec{g} \in I)$ be
  a condition.  For every $\vec{g} \in I$, let
  $\langle e^K_{\vec{g}} : K \in \Pcal_\ell(k), \vec{g} \in I\rangle$
  be Turing indices.  Then there is a branch $\vec{g} \in I$ and an
  $R$-extension $d$ such that for every branch $\vec{h} \in R$ of $d$
  refining some branch $\vec{g}$ in $c$,
$$
d \Vdash^K_{\vec{h}} \Phi_e^{G \oplus C}(x) \uparrow
\hspace{10pt}\mbox{ or }\hspace{10pt} d \Vdash^K_{\vec{h}} \Phi_e^{G
  \oplus C}(x)\downarrow \neq A(x)
$$
for some $K \in \Pcal_\ell(k)$, some $x \in \omega$ and
$e = e^K_{\vec{g}}$.
\end{lemma}
\begin{proof}
  Fix
  $\sigma_0 \prec_L \sigma_1 \prec_L \dots \prec_L
  \sigma_{2^{n-1}-2}$.  Fix $p = \height(I)$, $x \in \omega$ and
  $v < 2$.

  Given some
  $\rho_{\sigma_{2^{n-1}-2}} : [p]^{\sigma^{+}_{2^{n-1}-2}} \to
  \Pcal_{\vec{\ell}(n,\sigma_{2^{n-1}-2})}(k)$ and some
  $h_\epsilon, h_{\sigma_0}, \dots, h_{\sigma_{2^{n-1}-3}}$ such that
  $h_\epsilon : [\omega]^n \to \Pcal_1(k)$ and
  $h_{\sigma_i} : [\omega]^{\sigma_i^{+}} \to
  \Pcal_{\vec{\ell}(n,\sigma_i)}(k)$, let
$$
\Ccal_{x,v}(\rho_{\sigma_{2^{n-1}-2}}, h_\epsilon, h_{\sigma_0},
\dots, h_{\sigma_{2^{n-1}-3}})
$$
be the $\Pi^0_1(X \oplus C \oplus \vec{h})$ class of all
$h_{\sigma_{2^{n-1}-2}} : [\omega]^{\sigma_{2^{n-1}-2}^{+}} \to
\Pcal_{\vec{\ell}(n,\sigma_{2^{n-1}-2})}(k)$ such that
$\rho_{\sigma_{2^{n-1}-2}} \subseteq h_{\sigma_{2^{n-1}-2}}$ and for
every $K \in \Pcal_\ell(k)$ and for every finite set $E \subseteq X$
such that $h_\sigma(\vec{x}) \subseteq K$ for each
$\sigma \in \Dec_n \cup \{\epsilon\}$ and
$\vec{x} \in [F^K_{\vec{\rho}} \cup E]^{\sigma^{+}}$,
$$
\Phi_{e^K_{\vec{g}}}^{(F^K_{\vec{g}} \cup E) \oplus C}(x) \uparrow
\mbox{ or } \Phi_{e^K_{\vec{g}}}^{(F^K_{\vec{g}} \cup E) \oplus C}(x)
\downarrow \neq v
$$
where $\vec{g} \in I$ is such that $\vec{h} \leq \vec{g}$.

Given some
$\rho_{\sigma_{2^{n-1}-3}} : [p]^{\sigma^{+}_{2^{n-1}-3}} \to
\Pcal_{\vec{\ell}(n,\sigma_{2^{n-1}-3})}(k)$ and some
$h_\epsilon, h_{\sigma_0}, \dots, h_{\sigma_{2^{n-1}-4}}$, let
$\Ccal_{x,v}(\rho_{\sigma_{2^{n-1}-3}}, h_\epsilon, h_{\sigma_0},
\dots, h_{\sigma_{2^{n-1}-4}})$ be the
$\Pi^0_1(X \oplus C \oplus \vec{h})$ class of all
$h_{\sigma_{2^{n-1}-3}} : [\omega]^{\sigma_{2^{n-1}-3}^{+}} \to
\Pcal_{\vec{\ell}(n,\sigma_{2^{n-1}-3})}(k)$ such that
$\rho_{\sigma_{2^{n-1}-3}} \subseteq h_{\sigma_{2^{n-1}-3}}$ and for
every
$\rho_{\sigma_{2^{n-1}-2}} : [p]^{\sigma^{+}_{2^{n-1}-2}} \to
\Pcal_{\vec{\ell}(n,\sigma_{2^{n-1}-2})}(k)$,
$$
\Ccal_{x,v}(\rho_{\sigma_{2^{n-1}-2}}, h_\epsilon, h_{\sigma_0},
\dots, h_{\sigma_{2^{n-1}-4}}, h_{\sigma_{2^{n-1}-3}}) \neq \emptyset
$$

And so on. Then we let $\Ccal_{x,v}$ be the $\Pi^0_1(X \oplus C)$
class of all $h_\epsilon : [\omega]^n \to \Pcal_1(k)$ such that for
every
$\rho_{\sigma_0} : [p]^{\sigma^{+}_0} \to
\Pcal_{\vec{\ell}(n,\sigma_0)}(k)$,
$\Ccal_{x,v}(\rho_{\sigma_0}, h_\epsilon) \neq \emptyset$.  Finally,
let $W = \{ (x, v) : \Ccal_{x, v} = \emptyset \}$.  Note that $W$ is
an $X \oplus C$-c.e. set. We have three cases.
\begin{itemize}
\item Case 1: There is some $x \in \omega$ such that
  $(x, 1-A(x)) \in W$. By definition, $\Ccal_{x, 1-A(x)} = \emptyset$.
  In particular, the function
  $h_\epsilon = \vec{x} \mapsto \{f(\vec{x})\}$ is not in
  $\Ccal_{x, 1-A(x)}$, so there is a
  $\rho_{\sigma_0} : [p]^{\sigma^{+}_0} \to
  \Pcal_{\vec{\ell}(n,\sigma_0)}(k)$ such that
  $\Ccal_{x,1-A(x)}(\rho_{\sigma_0}, h_\epsilon) = \emptyset$. By
  compactness, there is some $p_0 \in \omega$ such that for every
  $h_{\sigma_0} : [p_0]^{\sigma_0^{+}} \to
  \Pcal_{\vec{\ell}(n,\sigma_0)}(k)$, there is a
  $\rho_{\sigma_1} : [p]^{\sigma^{+}_1} \to
  \Pcal_{\vec{\ell}(n,\sigma_1)}(k)$ such that
  $\Ccal_{x,1-A(x)}(\rho_{\sigma_1}, h_\epsilon, h_{\sigma_0}) =
  \emptyset$.  By iterating the reasoning and assuming that $p_0$ is
  large enough to be the same witness of compactness, we obtain a
  non-empty collection $R$ of $\vec{h} \in \Ib^{<\omega}$ with
  $\height(\vec{h}) = p_0$ satisfying the following two properties:
  First, letting
  $J = R \cup \{\vec{g} \in I : (\forall \vec{h} \in R) \vec{h} \not
  \leq \vec{g}\}$, the set $J$ is an index set. Second, for each
  $\vec{h} \in R$, there is some $K \in \Pcal_\ell(k)$ and some finite
  set $E_{\vec{h}} \subseteq X$ such that
  $h_\sigma(\vec{x}) \subseteq K$ for each
  $\sigma \in \Dec_n \cup \{\epsilon\}$ and
  $\vec{x} \in [F^K_{\vec{\rho}} \cup E_{\vec{h}}]^{\sigma^{+}}$, and
$$
\Phi_{e^K_{\vec{g}}}^{(F^K_{\vec{g}} \cup E) \oplus C}(x) \downarrow =
1-A(x)
$$
where $\vec{g} \in I$ is such that $\vec{h} \leq \vec{g}$.  Define the
$R$-extension
$d = (H^K_{\vec{h}}, X - \{0, \dots, p_0\} : K \in \Pcal_\ell(k),
\vec{h} \in J)$ of $c$ by setting
$H^K_{\vec{h}} = F^K_{\vec{g}} \cup E_{\vec{h}}$ if $\vec{h} \in R$
and $\vec{g} \in I$ is such that $\vec{h} \leq \vec{g}$ and
$K_{\vec{g}} = K$.  Otherwise, set $H^K_{\vec{h}} = F^K_{\vec{g}}$
where $\vec{g} \in I$ is such that $\vec{h} \leq \vec{g}$.  For every
branch $\vec{h} \in R$ of $d$ refining $\vec{g}$, letting
$K = K_{\vec{h}}$ and $e = e^K_{\vec{g}}$,
$d \Vdash^K_{\vec{h}} \Phi_{e^K_{\vec{g}}}^{G \oplus C}(x)\downarrow
\neq A(x)$.

\item Case 2: There is some $x \in \omega$ such that
  $(x, 0), (x, 1) \not \in W$.  In particular $(x, A(x)) \not \in W$,
  so $\Ccal_{x, A(x)} \neq \emptyset$.  Since $\Mcal \models \wkl$,
  there is some $\Tb_p$-tree $S \in \Mcal$ such that for every maximal
  sequence
  $\vec{\rho} = \langle \rho_{\sigma_0}, \dots,
  \rho_{\sigma_{2^{n-1}-2}}\rangle \in \Tb_p$, letting
  $h_\epsilon = S(\langle \rangle)$ and, for each $i < 2^{n-1}-1$, letting
  $h_{\sigma_i} = S(\langle\rho_{\sigma_0}, \dots, \rho_{\sigma_i}
  \rangle)$, for every $K \in \Pcal_\ell(k)$ and for every finite set
  $E \subseteq X$ such that $h_\sigma(\vec{x}) \subseteq K$ for each
  $\sigma \in \Dec_n \cup \{\epsilon\}$ and
  $\vec{x} \in [F^K_{\vec{\rho}} \cup E]^{\sigma^{+}}$,
$$
\Phi_{e^K_{\vec{g}}}^{(F^K_{\vec{g}} \cup E) \oplus C}(x) \uparrow
\mbox{ or } \Phi_{e^K_{\vec{g}}}^{(F^K_{\vec{g}} \cup E) \oplus C}(x)
\downarrow \neq A(x)
$$
where $\vec{g} \in I$ is such that $\vec{h} \leq \vec{g}$.  By
Lemma~\ref{lem:grtn-thinout-compatible}, there is an infinite set
$Y \subseteq X$ in $\Mcal$, and some $\vec{\rho} \in \Ib^{<\infty}$
with $\height(\vec{g}) = p$, such that, letting
$h_\epsilon = S(\langle\rangle)$ and
$\vec{h} = \langle S(\xi) : \xi \leq_{\Tb_p} \vec{g} \rangle$, $Y$ is
$(h_\epsilon, \vec{h})$-compatible with~$\vec{\rho}$.  Let
$\vec{g} \in I$ be such that $\vec{\rho} \leq \vec{g}$. In particular,
$Y$ is $(h_\epsilon, \vec{h})$-compatible with $\vec{g}$.  Since for
each $\sigma \in \Dec_n \cup \{\epsilon\}$,
$\Mcal \models \rt^{\sigma^{+}}_{<\infty, \ell_{\sigma^{+}}}$, then by
an iterative process, we obtain an infinite set $Y_1 \subseteq Y$ in
$\Mcal$ and for each $\sigma \in \Dec_n \cup \{\epsilon\}$ some set of
colors
$C_\sigma \in
\Pcal_{\ell_{\sigma^{+}}}(\Pcal_{\vec{\ell}(n,\sigma)}(k))$ such that
$h_\sigma[Y_1]^{\sigma^{+}} \subseteq C_\sigma$. In particular, for
$\sigma \in \Dec_n$, $|C_\sigma| = \ell_{\sigma^{+}}$ and each element
of $C_\sigma$ has size $\vec{\ell}(n,\sigma)$, so
$|\bigcup C_\sigma| = \vec{\ell}(n,\sigma) \times
\ell_{\sigma^{+}}$. Moreover,
$C_\epsilon \in \Pcal_{\ell_n}(\Pcal_{\vec{\ell}(n,\epsilon)}(k))$
with $\vec{\ell}(n,\epsilon) = 1$, so $|C_\epsilon| = \ell_n$ and
$|\bigcup C_\epsilon| = \ell_n$. It follows that
\begin{eqnarray*}
  |\bigcup_{\sigma \in \Dec_n \cup \{\epsilon\}} \bigcup C_\sigma| &\leq & 
                                                                           |\bigcup C_\epsilon| + \sum_{\sigma \in \Dec_n} |\bigcup C_\sigma|| \\
                                                                   & \leq & \ell_n + \sum_{\sigma \in \Dec_n} \vec{\ell}(n,\sigma) \times \ell_{\sigma^{+}}	
\end{eqnarray*}
Therefore, there is some $K \in \Pcal_\ell(k)$ such that
$K \supseteq \bigcup_{\sigma \in \Dec_n \cup \{\epsilon\}} \bigcup
C_\sigma$.  By definition of a condition,
$g_\sigma(\vec{x}) \subseteq K$ for each
$\sigma \in \Dec_n \cup \{\epsilon\}$ and
$\vec{x} \in [F^K_{\vec{g}}]^{\sigma^{+}}$.  By choice of $K$,
$h_\sigma(\vec{x}) \subseteq K$ for each
$\sigma \in \Dec_n \cup \{\epsilon\}$ and
$\vec{x} \in [Y]^{\sigma^{+}}$.  By choice of $Y$, $Y$ is
$(h_\epsilon, \vec{h})$-compatible with $\vec{g}$.  Therefore, by
Lemma~\ref{lem:grtn-compatible-union}, $h_\sigma(\vec{x}) \subseteq K$
for each $\sigma \in \Dec_n \cup \{\epsilon\}$ and
$\vec{x} \in [F^K_{\vec{g}} \cup Y]^{\sigma^{+}}$.  The condition
$d = (F^K_{\vec{g}}, Y : K \in \Pcal_\ell(k), \vec{g} \in I)$ is a
$\{\vec{g}\}$-extension of $c$ such that
$d^K_{\vec{g}} \Vdash \Phi_e^{G \oplus C}(x)\uparrow$, where
$e = e^K_{\vec{g}}$.
	
\item Case 3: None of the above cases hold. In this case, we can
  $X \oplus C$-compute the set $A$, contradicting our assumption.
\end{itemize}
\end{proof}

For the simplicity of notation, given some $\vec{h} \in \Ib^\omega$
and a condition $c$ with index set $I$, we might write
$\Vdash^K_{\vec{h}}$ for $\Vdash^K_{\vec{g}}$ where $\vec{g}$ is the
unique branch in $I$ such that $\vec{h} \leq \vec{g}$.

Let $\Fcal$ be a sufficiently generic filter for this notion of
forcing.  By Lemma~\ref{lem:grtn-sca-direct-progress}, there is some
$\vec{h} \in \Ib^\omega$ such that for every tuple of indices
$\langle e_K \in \omega : K \in \Pcal_\ell(k) \rangle$,
\begin{equation}
  c \Vdash^K_{\vec{h}} \Phi_{e_K}^{G \oplus C}(x) \uparrow \hspace{10pt}\mbox{ or }\hspace{10pt} c \Vdash^K_{\vec{h}} \Phi_{e_K}^{G \oplus C}(x)\downarrow \neq A(x)
\end{equation}
for some $c \in \Fcal$, $K \in \Pcal_\ell(k)$ and some $x \in
\omega$. We claim that there is some $K \in \Pcal_\ell(k)$ such that
for every index $e \in \omega$,
\begin{equation}
  c \Vdash^K_{\vec{h}} \Phi_e^{G \oplus C}(x) \uparrow \hspace{10pt}\mbox{ or }\hspace{10pt} c \Vdash^K_{\vec{h}} \Phi_e^{G \oplus C}(x)\downarrow \neq A(x)
\end{equation}
for some $c \in \Fcal$ and some $x \in \omega$.  Indeed, suppose
not. Then for every $K \in \Pcal_\ell(k)$, there is some $e_K$ such
that for every $c \in \Fcal$ and $x \in \omega$, the equation (2) does
not hold. Then this contradicts the equation (1) for the tuple
$\langle e_K \in \omega : K \in \Pcal_\ell(k) \rangle$.

In what follows, we fix $\Fcal$, $\vec{h}$ and $K$ such that the
equation (2) holds.  Let
$$
G = \bigcup \{ F^K_{\vec{g}} : (F^K_{\vec{g}}, X : K \in \Pcal_\ell(k)
, \vec{g} \in I) \in \Fcal, \vec{h} \leq \vec{g} \}
$$

\begin{lemma}
  The set $G$ is infinite.
\end{lemma}
\begin{proof}
  Let $t \in \omega$.  Let $\Phi^{G \oplus C}_e$ be the Turing
  functional which on input $x$ searches for some $y \in G$ such that
  $y > t$. If found, the program halts and output 1. Otherwise it
  diverges.  Let
  $c = (F^K_{\vec{g}}, X : K \in \Pcal_\ell(k) , \vec{g} \in I) \in
  \Fcal$ and $x$ be such that
$$
c \Vdash^K_{\vec{h}} \Phi_e^{G \oplus C}(x) \uparrow
\hspace{10pt}\mbox{ or }\hspace{10pt} c \Vdash^K_{\vec{h}} \Phi_e^{G
  \oplus C}(x)\downarrow \neq A(x)
$$
 
Note that $c \not \Vdash^K_{\vec{h}} \Phi^{G \oplus C}_e(x) \uparrow$
since the reservoir $X$ is infinite. It follows that
$c \Vdash^K_{\vec{h}} \Phi_e^{G \oplus C}(x)\downarrow \neq A(x)$.
Unfolding the definition of the forcing relation,
$\Phi_e^{F^K_{\vec{g}} \oplus C}(x)\downarrow$ where $\vec{g} \in I$
is such that $\vec{h} \leq \vec{g}$. In other words,
$\max F^K_{\vec{g}} > t$.  Since $F^K_{\vec{g}} \subseteq G$, there is
some $y \in G$ with $y > t$.
\end{proof}

By construction, $f[G]^n \subseteq K$, and by choice of $\vec{h}$,
$G \oplus C$ does not compute $A$.  This completes the proof of
Theorem~\ref{thm:grtn-strong-cone-avoidance-direct}.
\end{proof}

\begin{theorem}\label{thm:rtn-strong-cone-avoidance-arith}
  For every $n \geq 1$, $\rt^n_{<\infty, 2^{n-1}}$ admits strong cone
  avoidance for non-arithmetical cones.
\end{theorem}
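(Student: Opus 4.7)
The plan is to deduce Theorem~\ref{thm:rtn-strong-cone-avoidance-arith} as a direct consequence of Theorem~\ref{thm:grtn-strong-cone-avoidance-direct} by taking all of the auxiliary parameters $\ell_1,\ldots,\ell_n$ to equal $1$. Fix $C$ and a set $B$ which is not arithmetical in $C$. I would first pick a countable Scott set $\Mcal$ (that is, an $\omega$-model of $\wkl$) which additionally models $\aca$ and contains $C$; standard closure arguments produce such an $\Mcal$. Because $\Mcal \models \aca$ and $C \in \Mcal$, every set arithmetical in $C$ lies in $\Mcal$, and in particular $B \notin \Mcal$. Since $\aca$ proves $\rt^s$ for every $s \geq 1$, and $\rt^s$ coincides with $\rt^s_{<\infty,1}$, we get $\Mcal \models \rt^s_{<\infty,\ell_s}$ with $\ell_s = 1$ for every $s \in \{1,\ldots,n\}$.

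Next I would verify that the bound from Theorem~\ref{thm:grtn-strong-cone-avoidance-direct} collapses to exactly $2^{n-1}$ under this choice. With all $\ell_s=1$, Definition~\ref{def:inductive-size-colors} gives $\vec{\ell}(n,\sigma)=1$ for every $\sigma\in\Dec_n$, and the factor $\ell_{\sigma^{+}}$ equals $1$ as well. Since strictly decreasing non-empty sequences over $\{1,\dots,n-1\}$ are in one-to-one correspondence with non-empty subsets of that set, $|\Dec_n|=2^{n-1}-1$, and therefore
\[
\ell \;=\; \ell_n + \sum_{\sigma \in \Dec_n} \vec{\ell}(n,\sigma)\cdot \ell_{\sigma^{+}} \;=\; 1 + (2^{n-1}-1) \;=\; 2^{n-1}.
\]

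Finally, given any instance $f : [\omega]^n \to k$ of $\rt^n_{<\infty,2^{n-1}}$, I would apply Theorem~\ref{thm:grtn-strong-cone-avoidance-direct} with the Scott set $\Mcal$ and the set $B$. This yields a solution $G$ to $f$ such that $B \not\leq_T G \oplus C'$ for every $C' \in \Mcal$; in particular, taking $C' = C$ gives $B \not\leq_T G \oplus C$, which is exactly strong cone avoidance of $\rt^n_{<\infty,2^{n-1}}$ for non-arithmetical cones. The main work has already been done inside Theorem~\ref{thm:grtn-strong-cone-avoidance-direct}, so essentially the only point that requires care is the construction of the Scott set $\Mcal$, which is a routine application of the usual closure properties of Scott sets under arithmetical definability and under representation of members of $\Pi^0_1$ classes.
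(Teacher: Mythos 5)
Your overall strategy is exactly the paper's: reduce to Theorem~\ref{thm:grtn-strong-cone-avoidance-direct} with $\ell_1 = \dots = \ell_n = 1$ (so that each $\rt^s_{<\infty,\ell_s}$ is just Ramsey's theorem), and your computation $\ell = \ell_n + \sum_{\sigma \in \Dec_n} \vec{\ell}(n,\sigma)\cdot\ell_{\sigma^{+}} = 1 + (2^{n-1}-1) = 2^{n-1}$ is correct, as is the final application. However, there is one genuinely wrong step: from ``$\Mcal \models \aca$ and $C \in \Mcal$, hence every set arithmetical in $C$ lies in $\Mcal$'' you conclude ``in particular $B \notin \Mcal$''. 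That inference goes in the wrong direction. An arbitrary countable $\omega$-model of $\aca$ containing $C$ is only guaranteed to contain \emph{at least} the sets arithmetical in $C$; nothing prevents it from containing additional sets, including $B$ itself. Closure under arithmetical definability over $C$ gives no upper bound on the members of $\Mcal$, and the hypothesis you actually need for Theorem~\ref{thm:grtn-strong-cone-avoidance-direct} is precisely $B \notin \Mcal$.

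The repair is easy and brings you back to what the paper does: you must build $\Mcal$ \emph{inside} the sets arithmetical in $C$, not merely containing them. For instance, take $\Mcal$ to be exactly the collection of sets arithmetical in $C$ (the minimal $\omega$-model of $\aca$ containing $C$); this is a countable Scott set, it satisfies $\rt^s$ for every standard $s$, it contains $C$, and it omits $B$ because $B$ is not arithmetical in $C$. The paper instead constructs an $\omega$-model of $\wkl + \rt$ all of whose members are arithmetical in $C$, by closing under low solutions to $\wkl$-instances (Jockusch--Soare) and arithmetical solutions to $\rt$-instances (Jockusch); either model works, and with that correction your argument coincides with the paper's proof.
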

\begin{proof}
  Fix a set $C$, a set $A$ which is not arithmetical in $C$, and a
  coloring $f : [\omega]^n \to k$.  By Jockusch and
  Soare~\cite{Jockusch197201}, every computable instance of $\wkl$ has
  a low solution, and by Jockusch~\cite{Jockusch1972Ramseys}, every
  computable instance of $\rt$ has an arithmetical
  solution. Therefore, there is a countable $\omega$-model $\Mcal$ of
  $\wkl + \rt$ such that $C \in \Mcal$ and containing only sets
  arithmetical in $C$. In particular, $A$ is not arithmetical in any
  element of $\Mcal$.  By
  Theorem~\ref{thm:grtn-strong-cone-avoidance-direct}, letting
  \begin{align*}
    \ell & = 1 + \sum_{\sigma \in \Dec_n} 1 = 2^{n-1}
  \end{align*}
  there is an infinite set $G \subseteq \omega$ such that
  $f[G]^n \subseteq_{\ell} k$, and such that for every $C \in \Mcal$,
  $A \not \leq_T G \oplus C$.  This completes the proof of
  Theorem~\ref{thm:rtn-strong-cone-avoidance-arith}.
\end{proof}

Let $\ell_1, \ell_2, \dots$ be the sequence inductively defined by
$\ell_1 = 1$, and
$$
\ell_{n+1} = \ell_n + \sum_{\sigma \in \Dec_n} \vec{\ell}(n,\sigma)
\cdot \ell_{\sigma^{+}}
$$

\begin{lemma}\label{lem:ell-is-catalan}
  For every $n \geq 1$,
$$
\ell_{n+1} = \sum_{i=0}^{n-1} \ell_{i+1} \ell_{n-i}
$$
\end{lemma}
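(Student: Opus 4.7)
The plan is to prove the identity by partitioning $\Dec_n$ according to the first (largest) entry of each decreasing sequence, and then recognizing the resulting inner sum as the defining recurrence of $\ell_{i+1}$.

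First, for each $i \in \{1, \dots, n-1\}$, let $\Dec_n^i$ denote the set of sequences $\sigma \in \Dec_n$ whose first (largest) entry $n_0$ equals $i$. Every such $\sigma$ is either $(i)$ itself or of the form $(i)\smf\tau$ for some $\tau \in \Dec_i$ (i.e., a strictly decreasing non-empty sequence over $\{1,\dots,i-1\}$). A direct unraveling of the product in Definition~\ref{def:inductive-size-colors} yields the factorization $\vec{\ell}(n,(i)\smf\tau) = \ell_{n-i}\cdot \vec{\ell}(i,\tau)$ when $\tau$ is non-empty, and $\vec{\ell}(n,(i)) = \ell_{n-i}$. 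Moreover $((i)\smf\tau)^{+} = \tau^{+}$ while $(i)^{+} = i$.

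Second, I would compute the total contribution of the block $\Dec_n^i$. Factoring out $\ell_{n-i}$ and invoking the defining recurrence $\ell_{i+1} = \ell_i + \sum_{\tau \in \Dec_i} \vec{\ell}(i,\tau)\,\ell_{\tau^{+}}$, one obtains
\begin{align*}
A_i \;:=\; \sum_{\sigma \in \Dec_n^i} \vec{\ell}(n,\sigma)\,\ell_{\sigma^{+}}
&= \ell_{n-i}\,\ell_i + \sum_{\tau \in \Dec_i} \ell_{n-i}\,\vec{\ell}(i,\tau)\,\ell_{\tau^{+}} \\
&= \ell_{n-i}\left(\ell_i + \sum_{\tau \in \Dec_i} \vec{\ell}(i,\tau)\,\ell_{\tau^{+}}\right)
\;=\; \ell_{n-i}\,\ell_{i+1}.
\end{align*}
The edge case $i=1$ is automatic since $\Dec_1 = \emptyset$ forces $\ell_2 = \ell_1$, giving $A_1 = \ell_{n-1}\,\ell_1 = \ell_{n-1}\,\ell_2$.

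Finally, summing over all blocks and using $\ell_1 = 1$,
$$
\ell_{n+1} \;=\; \ell_n + \sum_{i=1}^{n-1} A_i
\;=\; \ell_1\,\ell_n + \sum_{i=1}^{n-1} \ell_{i+1}\,\ell_{n-i}
\;=\; \sum_{i=0}^{n-1} \ell_{i+1}\,\ell_{n-i},
$$
which is exactly the claimed identity. The only mildly non-routine step is the factorization $\vec{\ell}(n,(i)\smf\tau) = \ell_{n-i}\cdot \vec{\ell}(i,\tau)$, which requires keeping track of how the leading factor $\ell_{n-n_0}$ of $\vec{\ell}(n,\sigma)$ transforms when the leading entry $n_0 = i$ is peeled off; once this bookkeeping is in hand, the remainder of the argument is a telescoping reassembly via the defining recurrence.
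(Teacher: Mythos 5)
Your proof is correct and follows essentially the same route as the paper's: partition $\Dec_n$ by the leading entry $i$, factor $\vec{\ell}(n,i^\frown\tau)=\ell_{n-i}\,\vec{\ell}(i,\tau)$, and collapse each block to $\ell_{n-i}\,\ell_{i+1}$ via the defining recurrence. The only cosmetic difference is that the paper dresses this up as an induction on $n$, whereas you (correctly) observe that the defining recurrence for $\ell_{i+1}$ suffices.
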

\begin{proof}
  By induction over $n$.  Assume
  $\ell_{i+1} = \sum_{i=0}^{i-1} \ell_{i+1} \ell_{i-1}$ for every
  $i \in \{1, \dots, n-1\}$.  By definition,
  $\ell_{n+1} = \ell_n + \sum_{\sigma \in \Dec_n} \vec{\ell}(n,\sigma)
  \cdot \ell_{\sigma^{+}}$.

  Fix $i \in \{1, \dots, n-1\}$. The strings $\sigma \in \Dec_n$ such
  that $\sigma(0) = i$ are precisely the strings of the form
  $i^\frown\tau$ for some $\tau \in \Dec_i \cup \{\epsilon\}$.
  Therefore,

$$
\begin{array}{rl}
  \sum_{\sigma \in \Dec_n, \sigma(0) = i} \vec{\ell}(n,\sigma) \cdot \ell_{\sigma^{+}}
  & = \sum_{\tau \in \Dec_i \cup \{\epsilon\}} \vec{\ell}(n,i^\frown \tau) \cdot \ell_{(i^\frown\tau)^{+}}\\
  & = \vec{\ell}(n, i)\ell_i + \sum_{\tau \in \Dec_i} \ell_{n-i} \cdot \vec{\ell}(i,\tau) \cdot \ell_{\tau^{+}}\\
  &= \ell_{n-i}(\ell_i + \sum_{\tau \in \Dec_i} \vec{\ell}(i,\tau) \cdot \ell_{\tau^{+}})\\
  &= \ell_{n-i} \cdot \ell_{i+1}
\end{array}
$$
Therefore,
$$
\ell_{n+1} = \ell_n + \sum_{\sigma \in \Dec_n} \vec{\ell}(n,\sigma)
\cdot \ell_{\sigma^{+}} = \ell_n + \sum_{i=1}^{n-1}
\ell_{n-i}\ell_{i+1} = \sum_{i=0}^{n-1} \ell_{i+1} \ell_{n-i}
$$
This completes the proof of Lemma~\ref{lem:ell-is-catalan}.
\end{proof}

Recall that $d_0, d_1, \dots$ denotes the Catalan sequence,
inductively defined by $d_0 = 1$ and
$$
d_{n+1} = \sum_{i=0}^n d_i d_{n-i}
$$

\begin{corollary}\label{cor:dn-is-ellnplus}
  For every $n \geq 0$, $d_n = \ell_{n+1}$.
\end{corollary}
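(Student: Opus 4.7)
The plan is to prove the corollary by straightforward induction on $n$, using Lemma~\ref{lem:ell-is-catalan} as the key ingredient that has already done the combinatorial work. Once that lemma is in hand, the recursion for $\ell_{n+1}$ matches the Catalan recursion essentially term-by-term.

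First I would check the base case: $d_0 = 1 = \ell_1$ directly from the definitions, so the statement holds for $n = 0$.

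Next, for the inductive step, assume $d_i = \ell_{i+1}$ for every $i \in \{0, 1, \dots, n-1\}$. By definition of the Catalan numbers,
\[
d_n = \sum_{i=0}^{n-1} d_i\, d_{n-1-i},
\]
while by Lemma~\ref{lem:ell-is-catalan},
\[
\ell_{n+1} = \sum_{i=0}^{n-1} \ell_{i+1}\, \ell_{n-i}.
\]
Applying the inductive hypothesis to the factors of the right-hand side (noting that $\ell_{i+1} = d_i$ and $\ell_{n-i} = d_{n-1-i}$ for each $i$ in the range of summation, since $i \leq n-1$ and $n-i-1 \leq n-1$) yields $\ell_{n+1} = \sum_{i=0}^{n-1} d_i\, d_{n-1-i} = d_n$, completing the induction.

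There is no real obstacle here; the combinatorial content is entirely absorbed by Lemma~\ref{lem:ell-is-catalan}, which recasts the somewhat opaque recursion $\ell_{n+1} = \ell_n + \sum_{\sigma \in \Dec_n} \vec{\ell}(n,\sigma)\cdot \ell_{\sigma^{+}}$ into the Catalan convolution form. The only thing to be careful about is the index shift $d_n \leftrightarrow \ell_{n+1}$, which is why the base case is $n=0$ rather than $n=1$, and why the two convolution sums above have matching lengths once indices are aligned.
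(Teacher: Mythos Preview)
Your proof is correct and is precisely the argument the paper has in mind; the paper's own proof is the single line ``Immediate by Lemma~\ref{lem:ell-is-catalan},'' which is shorthand for exactly the induction you spelled out (same base case, same recursion, hence same sequence up to the index shift).
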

\begin{proof}
  Immediate by Lemma~\ref{lem:ell-is-catalan}.
\end{proof}

\begin{theorem}\label{thm:rtn-strong-cone-avoidance-real}
  For every $n \geq 1$, $\rt^n_{<\infty, d_n}$ admits strong cone
  avoidance.
\end{theorem}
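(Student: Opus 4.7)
The plan is to induct on $n$, with the inductive step an immediate application of Theorem~\ref{thm:grtn-strong-cone-avoidance-direct} to a carefully built Scott set; the key observation is that Corollary~\ref{cor:dn-is-ellnplus} already guarantees that the right parameter choice makes the resulting color bound collapse to $d_n$.

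For the base case $n = 1$, we need strong cone avoidance of $\rt^1_{<\infty, 1}$, which is just the infinite pigeonhole: given $C \not\leq_T Z$ and a finite coloring of $\omega$, some color class is infinite, and a routine cone-avoiding forcing (e.g., computable Mathias) produces an infinite subset $H$ with $C \not\leq_T H \oplus Z$.

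For the inductive step, I would assume $\rt^m_{<\infty, d_m}$ admits strong cone avoidance for every $m < n$. By Theorem~\ref{thm:bridge-strong-to-non-strong}, this translates to cone avoidance of $\rt^s_{<\infty, d_{s-1}}$ for every $s \in \{2, \dots, n\}$; the case $s = 1$ is trivial, since $\rt^1_{<\infty, 1}$ holds in every $\omega$-model. Given $C \not\leq_T Z$ and an instance $f : [\omega]^n \to k$ of $\rt^n_{<\infty, d_n}$, I would then perform the usual iterative construction of a countable Scott set $\Mcal \ni Z$, closing under Turing reducibility, joins, paths through computable infinite binary subtrees of $2^{<\omega}$, and solutions to instances of $\rt^s_{<\infty, d_{s-1}}$ for each $s \leq n$, while using cone avoidance at each step to keep $C$ out of $\Mcal$. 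The resulting $\Mcal$ satisfies $C \not\leq_T X$ for every $X \in \Mcal$, and $\Mcal \models \rt^s_{<\infty, d_{s-1}}$ for every $s \in \{1, \dots, n\}$.

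Finally, apply Theorem~\ref{thm:grtn-strong-cone-avoidance-direct} to $\Mcal$ with $\ell_s := d_{s-1}$ for $s \in \{1, \dots, n\}$ and $B := C$. The resulting color bound
\[
\ell = \ell_n + \sum_{\sigma \in \Dec_n} \vec{\ell}(n,\sigma) \cdot \ell_{\sigma^{+}}
\]
is exactly the defining recurrence for the sequence introduced before Lemma~\ref{lem:ell-is-catalan}, and by Corollary~\ref{cor:dn-is-ellnplus} equals $\ell_{n+1} = d_n$. The theorem then supplies an infinite $G$ with $|f[G]^n| \leq d_n$ and $C \not\leq_T G \oplus X$ for every $X \in \Mcal$, and specializing $X = Z$ completes the proof. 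The only mild subtlety worth flagging is confirming that the Scott-set construction simultaneously achieves the required closure under $\rt^s_{<\infty, d_{s-1}}$ for all $s \leq n$ together with cone avoidance of $C$; all the genuine combinatorial work has already been discharged in Theorem~\ref{thm:grtn-strong-cone-avoidance-direct} and in the Catalan identity $\ell_{n+1} = d_n$.
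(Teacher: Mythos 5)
Your inductive step is essentially the paper's own argument: use the induction hypothesis (via Theorem~\ref{thm:bridge-strong-to-non-strong}) to get cone avoidance of $\rt^s_{<\infty, d_{s-1}}$ for $2 \leq s \leq n$, build a countable Scott set containing $Z$ and omitting $C$ that models $\wkl$ and $\rt^s_{<\infty, d_{s-1}}$ for all $s \leq n$, apply Theorem~\ref{thm:grtn-strong-cone-avoidance-direct} with $\ell_s = d_{s-1}$, and identify the bound with $d_n$ through Corollary~\ref{cor:dn-is-ellnplus}. That part is correct and matches the paper.

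The gap is your base case. Strong cone avoidance of $\rt^1_{<\infty,1}$ is not ``just the infinite pigeonhole'': the instance is an \emph{arbitrary} coloring, so an infinite color class $X$ is an arbitrary infinite set, and it is false in general that an arbitrary infinite set has an infinite subset $H$ with $C \not\leq_T H \oplus Z$. For instance, with $Z$ computable and $C = \emptyset'$, take $X$ whose principal function $p_X$ dominates a modulus of $\emptyset'$; then every infinite $H \subseteq X$ satisfies $p_H \geq p_X$, hence computes $\emptyset'$. (Also, ``computable Mathias'' forcing cannot even produce an infinite subset of $X$ when $X$ has no infinite computable subset.) The $n=1$ statement is true only because the color classes partition $\omega$, so they cannot all be made sparse simultaneously, and its proof must treat all classes at once with a disjunctive forcing requirement --- this is the Dzhafarov--Jockusch argument, and it is exactly what the shared-reservoir, simultaneous-stems forcing of Theorem~\ref{thm:grtn-strong-cone-avoidance-direct} accomplishes, even at $n=1$. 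The repair is easy and is how the paper proceeds: there is no separate elementary base case. For $n=1$ no induction hypothesis is needed, since every $\omega$-model of $\wkl$ satisfies $\rt^1_{<\infty,1}$ (an infinite color class is computable from the instance), and applying Theorem~\ref{thm:grtn-strong-cone-avoidance-direct} with $\ell_1 = 1$ and $\Dec_1 = \emptyset$ gives $\ell = 1 = d_1$, i.e., the full strength of strong cone avoidance for arbitrary $\rt^1$ instances. With the base case handled this way, your proof goes through.
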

\begin{proof}
  By induction over $n \geq 1$.  Fix a set $C$, a set
  $A \not \leq_T C$, and a coloring $f : [\omega]^n \to k$.  By
  Jocksuch and Soare~\cite{Jockusch197201}, every $C$-computable
  instance of $\wkl$ has a solution $P$ such that
  $A \not \leq_T P \oplus C$. By induction hypothesis, we can build a
  countable $\omega$-model $\Mcal$ of
  $\wkl \bigwedge_{s \in \{1, \dots, n\}} \rt^s_{<\infty, d_{s-1}}$
  such that $C \in \Mcal$ and $A \not \in \Mcal$. By
  Corollary~\ref{cor:dn-is-ellnplus},
  $\Mcal \models \bigwedge_{s \in \{1, \dots, n\}} \rt^s_{<\infty,
    \ell_s}$.  By Theorem~\ref{thm:grtn-strong-cone-avoidance-direct},
  there is an infinite set $G \subseteq \omega$ such that
  $f[G]^n \subseteq_{\ell_{n+1}} k$, and such that for every
  $C \in \Mcal$, $A \not \leq_T G \oplus C$. By
  Corollary~\ref{cor:dn-is-ellnplus}, $\ell_{n+1} = d_n$. This
  completes the proof of
  Theorem~\ref{thm:rtn-strong-cone-avoidance-real}.
\end{proof}

\begin{corollary}\label{cor:rtn-cone-avoidance-real}
  For every $n \geq 1$, $\rt^{n+1}_{<\infty, d_n}$ admits cone
  avoidance.
\end{corollary}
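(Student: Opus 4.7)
The plan is to combine Theorem~\ref{thm:rtn-strong-cone-avoidance-real}, which gives strong cone avoidance of $\rt^n_{<\infty, d_n}$, with the bridge Theorem~\ref{thm:bridge-strong-to-non-strong}, which establishes the equivalence between strong cone avoidance of $\rt^n_{<\infty, \ell}$ and cone avoidance of $\rt^{n+1}_{<\infty, \ell}$. Substituting $\ell = d_n$ in the bridge immediately converts strong cone avoidance at arity $n$ into cone avoidance at arity $n+1$, which is precisely the claim of the corollary.

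Operationally, the forward direction of the bridge carries all the content. Given a set $Z$, a $Z$-computable instance $f : [\omega]^{n+1} \to k$, and a cone $C \not \leq_T Z$, one would pick a computably Mathias generic set $G = \{x_0 < x_1 < \dots\}$ sufficiently generic to both preserve $C \not \leq_T G \oplus Z$ (via Wang's preservation lemma cited in the bridge proof) and to ensure that the limits $g(i_0, \dots, i_{n-1}) = \lim_{x_n \in G} f(x_{i_0}, \dots, x_{i_{n-1}}, x_n)$ exist for every $n$-tuple. One then applies strong cone avoidance of $\rt^n_{<\infty, d_n}$ — crucially in its strong form, since $g$ need not be $Z$-computable — relative to $G \oplus Z$ to obtain an infinite $H$ with $|g[H]^n| \leq d_n$ and $C \not \leq_T H \oplus G \oplus Z$. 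Finally, $H \oplus G \oplus Z$ uniformly computes an infinite $S \subseteq G$ with $|f[S]^{n+1}| \leq d_n$, witnessing cone avoidance for $f$.

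There is no real obstacle: the combinatorial work has already been absorbed into Theorem~\ref{thm:rtn-strong-cone-avoidance-real}, and the generic-set machinery is encapsulated in the bridge theorem. The proof of the corollary is therefore a one-line invocation of these two results.
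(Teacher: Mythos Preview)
Your proposal is correct and matches the paper's own proof exactly: the paper simply writes ``Immediate by Theorem~\ref{thm:rtn-strong-cone-avoidance-real} and Theorem~\ref{thm:bridge-strong-to-non-strong}.'' Your additional unpacking of the bridge argument is accurate but more detail than the paper provides.
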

\begin{proof}
  Immediate by Theorem~\ref{thm:rtn-strong-cone-avoidance-real} and
  Theorem~\ref{thm:bridge-strong-to-non-strong}.
\end{proof}

\section{The GAP principle}\label{sect:gap-principle}

As explained in Section~\ref{sect:strength-ts}, a candidate function
to improve the lower bound on the strength of the thin set theorem for
3-tuples was
$$
f(a, b, c) = \langle gap(a,b), gap(b,c), gap(a,c) \rangle
$$
where $gap(a,b) = \ell$ if $[a,b]$ is $g$-large, and $gap(a,b) = s$
otherwise.  In this section, we prove that there always exists an
infinite set $H \subseteq \omega$ which avoids the color
$\langle s, s, \ell \rangle$ and which does not compute the halting
set. We define the corresponding problem $\gap$, and study its reverse
mathematical strength.

\begin{definition}
  A set $H$ is \emph{$g$-transitive} if for every $x < y < z \in H$
  such that $[x, y]$ and $[y, z]$ are $g$-small, then $[x, z]$ is
  $g$-small.
\end{definition}

The notion of $g$-transitivity exactly says that the color
$\langle s, s, \ell \rangle$ is avoided for the previously defined
function $f$.

\begin{statement}
  $\gap$ is the statement \qt{For every increasing function
    $g : \omega \to \omega$, there is an infinite $g$-transitive set
    $H$.}  $\dgap$ is the statement \qt{For every $\Delta^0_2$
    increasing function $g : \omega \to \omega$, there is an infinite
    $g$-transitive set $H$.}
\end{statement}

The main motivation of the $\gap$ principle is the study of strong
cone avoidance of $\rt^3_{5,4}$.  We start by proving that $\gap$
follows from a stable version of the Erd\H{o}s-Moser theorem, which is
already known to admit strong cone avoidance.

\begin{definition}[Erd\H{o}s-Moser theorem] 
  A tournament $T$ on a domain $D \subseteq \N$ is an irreflexive
  binary relation on~$D$ such that for all $x,y \in D$ with
  $x \not= y$, exactly one of $T(x,y)$ or $T(y,x)$ holds. A tournament
  $T$ is \emph{transitive} if the corresponding relation~$T$ is
  transitive in the usual sense. A tournament $T$ is \emph{stable} if
  $(\forall x \in D)[(\forall^{\infty} s) T(x,s) \vee
  (\forall^{\infty} s) T(s, x)]$.  $\emo$ is the statement ``Every
  infinite tournament $T$ has an infinite transitive subtournament.''
  $\semo$ is the restriction of $\emo$ to stable tournaments.
\end{definition}

\begin{definition} 
  ${\sf{P}} \leq_{sc} \sf{Q}$ iff, for every instance of $\s{P}$,
  $I_{\s{P}}$, there is an instance of $\s{Q}$, $I_{\s{Q}}$, such that
  for every solution of $I_{\s{Q}}$, $S_{\s{Q}}$, computes a solution
  of $I_{\s{P}}$, $S_{\s{P}}$.
\end{definition}

\begin{theorem}
  $\gap \leq_{sc} \semo$.
\end{theorem}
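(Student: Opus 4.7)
The plan is to give a direct reduction: from an increasing function $g : \omega \to \omega$ (an instance of $\gap$), I will build a tournament $T$ on $\omega$ whose transitive subtournaments are literally the $g$-transitive sets. Specifically, for $x < y$ I set $T(x,y)$ to hold when $[x,y]$ is $g$-large (i.e.\ $y \geq g(x)$) and $T(y,x)$ to hold otherwise. Exactly one of the two holds, so $T$ is a tournament, and it is $g$-computable.

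The first verification is stability. Fix $x \in \omega$. Since $g(x)$ is a fixed natural number, for every $s \geq g(x)$ we have $[x,s]$ $g$-large, so $T(x,s)$ holds. Thus $(\forall^\infty s)T(x,s)$, so $T$ is a stable tournament and is a legitimate instance of $\semo$.

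The second, and main, step is showing that any infinite transitive subtournament $H$ of $T$ is already a $g$-transitive set. Suppose $a < b < c$ are elements of $H$ with $[a,b]$ and $[b,c]$ both $g$-small; then by construction $T(b,a)$ and $T(c,b)$ both hold. Transitivity of $T$ on $H$ forces $T(c,a)$, which by definition means $[a,c]$ is $g$-small. This is exactly the $g$-transitivity condition, so $H$ itself serves as a solution to the original $\gap$-instance and is trivially computable from itself. This establishes $\gap \leq_{sc} \semo$.

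The ``hard part'' is essentially nonexistent here: the reduction is identity on solutions, and the only thing to check is that the naive tournament encoding is genuinely stable, which follows immediately from $g$ being a total function on $\omega$ (equivalently, from the fact that for each fixed $x$, eventually $s$ exceeds $g(x)$). No Mathias forcing or combinatorial machinery is needed; the tournament-transitivity axiom on the ordered triple $a<b<c$ coincides on the nose with the $g$-transitivity axiom.
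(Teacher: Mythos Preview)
Your proof is correct and follows essentially the same approach as the paper: define a stable tournament from $g$ and observe that any transitive subtournament is automatically $g$-transitive. The only cosmetic difference is that you orient the tournament with $T(x,y)$ for $g$-large intervals while the paper orients it with $T(x,y)$ for $g$-small intervals (for $x<y$); the transitivity argument is identical up to this relabeling.
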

\begin{proof}
  Let $g : \omega \to \omega$ be a function.  Set $T(x, y)$ to hold if
  $x < y$ and $[x, y]$ is $g$-small, or $y \leq x$ and $[x, y]$ is
  $g$-large.  Note that $T$ is stable.  Let $H$ be an infinite
  $T$-transitive subtournament. Then for every $x < y < z$ such that
  $[x, y]$ and $[y, z]$ are $g$-small, $T(x, y)$ and $T(y, z)$ holds.
  By $T$-transitivity of $H$, $T(x, z)$ holds, hence $[x, z]$ is
  $g$-small.
\end{proof}

\begin{corollary}
  $\gap$ admits strong cone avoidance.
\end{corollary}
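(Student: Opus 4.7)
The plan is to simply chain the preceding theorem with a known strong cone avoidance result for stable Erd\H{o}s-Moser. More precisely, fix $C \not\leq_T Z$ and an arbitrary instance $g : \omega \to \omega$ of $\gap$. The proof of the theorem $\gap \leq_{sc} \semo$ constructs, uniformly in $g$, a stable tournament $T$ such that every infinite transitive subtournament of $T$ is $g$-transitive, hence a solution to $g$. Invoking strong cone avoidance of $\semo$ on the instance $T$ (with the same $C$ and $Z$) yields an infinite transitive subtournament $H$ with $C \not\leq_T H \oplus Z$. Since $H$ itself is a $\gap$-solution to $g$, this witnesses strong cone avoidance for $\gap$.

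The only nontrivial ingredient is therefore strong cone avoidance for $\semo$, which I would cite rather than reprove: this is known, for instance as a consequence of the preservation results for the stable Erd\H{o}s-Moser theorem (see Wang's framework of preservation and the subsequent work of Patey). The reduction built in the preceding theorem is a many-one computable reduction: $T$ is $g$-computable and each solution of $T$ is literally a solution of $g$, with no extra Turing computation needed on the solution side. Consequently the cone-avoidance property transfers with no loss, and no computability assumption on the instance $g$ is required.

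The main (only) obstacle is making sure the $\semo$ step is indeed known in the \emph{strong} form, i.e.\ for arbitrary, not necessarily computable, stable tournaments. This is the relevant point because $T$ in our reduction is only as effective as $g$, and $g$ is an arbitrary instance of $\gap$. If one preferred a self-contained argument, one could replace the citation by a direct forcing construction on infinite subsets of $\omega$ with finite stems and an infinite reservoir, ensuring $T$-transitivity of the stem and diagonalizing against Turing functionals $\Phi_e^{H \oplus Z} = C$; stability of $T$ makes the extension step routine. But given the reduction already in hand, the shortest route is to close the corollary in one line by appealing to strong cone avoidance of $\semo$.
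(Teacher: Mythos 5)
Your proposal is correct and is essentially the paper's own argument: compose the reduction $\gap \leq_{sc} \semo$ with strong cone avoidance of the Erd\H{o}s--Moser principle. The paper settles your one worry by citing strong cone avoidance of the full $\emo$ (Patey), which applies to arbitrary tournaments and in particular to the stable tournament $T$ built from an arbitrary instance $g$, so no separate ``strong'' result for $\semo$ is needed.
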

\begin{proof}
  $\emo$ admits strong cone avoidance~\cite{PateyCombinatorial} and
  $\gap \leq_{sc} \emo$.
\end{proof}

\begin{corollary}
  $\rca + \dgap \nvdash \aca$.
\end{corollary}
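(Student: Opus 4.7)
The plan is to construct a countable $\omega$-model $\Mcal$ of $\rca + \dgap$ with $\emptyset' \notin \Mcal$. Since $\aca$ is equivalent over $\rca$ to ``$X'$ exists for every set $X$,'' any $\omega$-model of $\aca$ contains $\emptyset'$; hence such an $\Mcal$ witnesses $\rca + \dgap \nvdash \aca$.

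The key input is already in hand. Strong cone avoidance of $\gap$ (from the preceding corollary) immediately transfers to $\dgap$, because every $\dgap$-instance is in particular an arbitrary instance of $\gap$. Instantiating with $C = \emptyset'$, I obtain the following: for any set $Z$ with $\emptyset' \not\leq_T Z$ and any $Z$-$\Delta^0_2$ increasing function $g$, there exists an infinite $g$-transitive set $H$ with $\emptyset' \not\leq_T H \oplus Z$.

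I would then run the standard iterated-closure construction. Start with $Z_0 = \emptyset$ and maintain the invariant that the partial join $Y_s := Z_0 \oplus \cdots \oplus Z_s$ satisfies $\emptyset' \not\leq_T Y_s$. Fix an enumeration of all pairs $(s, e)$. At stage $s+1$, consider the function $g$ computed by the $e$-th Turing functional with oracle $Y_s'$, skipping the stage if this fails to define a total increasing function. By the previous paragraph applied with $Z = Y_s$, there is an infinite $g$-transitive set $H$ with $\emptyset' \not\leq_T H \oplus Y_s$; set $Z_{s+1} = H$, which preserves the invariant.

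Let $\Mcal$ be the closure of $\{Z_s : s \in \omega\}$ under $\leq_T$ and $\oplus$. Then $\Mcal$ is an $\omega$-model of $\rca$; every $\Delta^0_2$-in-$\Mcal$ instance of $\dgap$ is $\Delta^0_2$ over some $Y_s$ and was handled at a later stage by the diagonal enumeration, so $\Mcal \models \dgap$; and $\emptyset' \notin \Mcal$ by the invariant, so $\Mcal \not\models \aca$. There is no substantive obstacle: all the real work lives in strong cone avoidance of $\gap$, and the remainder is routine bookkeeping for the iterated construction.
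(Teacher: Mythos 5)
Your proposal is correct and matches the paper's intent: the paper's proof is exactly the one-line instruction ``build an $\omega$-model of $\rca + \dgap$ not containing the halting set,'' which you carry out by the standard iterated Turing-ideal construction using the strong cone avoidance of $\gap$ (instantiated with $C = \emptyset'$) established in the preceding corollary. The remaining details you defer (dovetailing the pairs $(s,e)$, using $Y_s'$ to enumerate the $\Delta^0_2(Y_s)$ instances) are indeed routine bookkeeping.
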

\begin{proof}
  Build an $\omega$-model of $\rca + \dgap$ which does not contain the
  halting set.
\end{proof}

\begin{theorem}
  $\gap \leq_{sc} \rt^3_{5,4}$.
\end{theorem}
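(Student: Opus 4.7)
My plan is to apply $\rt^3_{5,4}$ to the candidate coloring already flagged at the start of this section, namely
\[ f(a,b,c) = \langle gap(a,b),\, gap(b,c),\, gap(a,c) \rangle, \]
and then dispatch each possible missing color value by a short structural argument. Because $g$ is increasing, $gap(a,c) = \ell$ whenever $gap(a,b) = \ell$ or $gap(b,c) = \ell$, so the range of $f$ consists of exactly five patterns: $(s,s,s)$, $(s,s,\ell)$, $(s,\ell,\ell)$, $(\ell,s,\ell)$, $(\ell,\ell,\ell)$. Applying $\rt^3_{5,4}$ produces an infinite $H$ with $|f[H]^3| \leq 4$, so I will proceed by case analysis on which of the five patterns is absent from $f[H]^3$.

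Four of the five subcases are straightforward. If $(s,s,\ell)$ is absent, $H$ is immediately $g$-transitive. If $(s,s,s)$ is absent, each triple $h_i < h_{i+1} < h_{i+2}$ of consecutive elements of $H$ receives one of the remaining four colors, each with third component $\ell$, so $[h_i, h_{i+2}]$ is always $g$-large; the $H$-computable subset $\{h_0, h_2, h_4, \dots\}$ then has all consecutive (and hence all) pairs $g$-large and is vacuously $g$-transitive. If $(s, \ell, \ell)$ is absent, a single $g$-small pair $[a, b]$ in $H$ would force $[b, c]$ to be $g$-small for every $c > b$ in $H$, confining $H$ inside $[0, g(b)]$ and contradicting infinitude; so all pairs of $H$ are $g$-large and $H$ is trivially $g$-transitive. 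The case $(\ell,\ell,\ell)$ absent is ruled out entirely by a dual argument: the presence of any $g$-large pair $[a,b]$ would force every subsequent $[b,c]$ with $c > b$ in $H$ to be $g$-small (and so $H$ finite), while the absence of any $g$-large pair forces $H \subseteq [0, g(h_0)]$, again finite.

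The delicate case will be when $(\ell, s, \ell)$ is absent. The condition here is that any $g$-small pair $[b, c]$ in $H$ forces $[a, b]$ to be $g$-small for every $a < b$ in $H$; taking $a = h_0$ yields $b \leq g(h_0)$. Hence the left endpoints of $g$-small pairs in $H$ lie in the finite set $H \cap [0, g(h_0)]$, and the cofinal tail of $H$ beyond the corresponding finite threshold contains only $g$-large pairs and is therefore $g$-transitive. I would extract this tail using $g$ as an oracle (which the strong computable reduction permits via dependence on the instance), outputting it as the $\gap$-solution. The main obstacle is exactly this case: confirming that the threshold can be computed uniformly from $H$ and $g$, and verifying there is no hidden circularity in selecting the initial segment of $H$ to discard.
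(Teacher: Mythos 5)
Your proposal is correct and follows essentially the same route as the paper: the same coloring $f(a,b,c)=\langle gap(a,b), gap(b,c), gap(a,c)\rangle$, the same observation that monotonicity of $g$ leaves only five possible patterns, and the same case analysis on the avoided pattern (the paper rules out the all-large pattern, reads off transitivity when $\langle s,s,\ell\rangle$ is avoided, and passes to a tail or to every-other-element in the remaining cases).

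The one thing to fix is your framing of the ``delicate'' case $\langle \ell,s,\ell\rangle$. There is no obstacle there: strong computable reducibility only asks that each solution $H$ of the $\rt^3_{5,4}$-instance Turing-compute \emph{some} $g$-transitive set, with no uniformity requirement, and a tail $H \setminus [0,t]$ for a fixed finite threshold $t$ is trivially $H$-computable (the threshold is finite advice; it need not be found effectively). In fact, under the definition of $\leq_{sc}$ used in the paper the backward computation does \emph{not} get access to the instance $g$, so your suggested use of $g$ as an oracle would not be licensed --- but it is also unnecessary, since your own argument already shows all $g$-small pairs of $H$ have left endpoint below $\max(h_0,g(h_0))$, so the appropriate tail exists and is computable from $H$ alone. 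This is exactly the paper's Case 3, where it is noted only that uniformity (not computability) fails.
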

\begin{proof}
  Let $g : \omega \to \omega$ be a an increasing function.  Given
  $x < y \in \omega$, let $i(x,y) = 1$ if $[x, y]$ is $g$-large, and
  $i(x, y) = 0$ otherwise.  Let $f : [\omega]^3 \to 5$ be defined on
  $x < y < z$ by
  $f(x, y, z) = \langle i(x,y), i(y,zs), i(x,z)\rangle$. Note that $f$
  is a $5$-coloring, since the colors $\langle 1, 0, 0\rangle$,
  $\langle 0, 1, 0\rangle$, $\langle 1, 1, 0\rangle$ cannot occur.
  Let $H$ be an infinite set such that $f[H]^3$ avoids one color
  $c$. We have several cases.
  \begin{itemize}
  \item Case 1: $c = \langle 1, 1, 1 \rangle$. This case is
    impossible, since we can always pick three elements
    $x < y < z \in H$ sufficiently sparse so that $[x, y]$ and
    $[y, z]$ is $g$-large.
  \item Case 2: $c = \langle 0, 1, 1\rangle$. In this case, the set
    $H$ is only made of $g$-large intervals, and therefore is
    $g$-transitive. Indeed, suppose there is a $g$-small interval
    $[x, y]$ with $x < y \in H$. Then picking $z$ sufficiently far,
    $i(y, z) = 1$ and $i(x, z) = 1$, in which case
    $f(x, y, z) = \langle 0, 1, 1 \rangle$.
  \item Case 3: $c = \langle 1, 0, 1 \rangle$. Let $x < y \in H$ be
    such that $[x, y]$ is $g$-large, and let
    $G = \{ z \in H : z > y \}$. We claim that $G$ is made only of
    $g$-large intervals, hence is $g$-transitive. Indeed, suppose that
    $[u, v]$ is $g$-small for some $u < v \in G$. Then, since $[x, u]$
    is $g$-large, $f(x, u, v) = \langle 1, 0, 1\rangle$,
    contradiction.
  \item Case 4: $c = \langle 0, 0, 1 \rangle$. This means exactly that
    $H$ is $g$-transitive.
  \item Case 5: $c = \langle 0, 0, 0 \rangle$. Suppose
    $H = \{x_0 < x_1 < \dots \}$, and let
    $G = \{x_{2n} : n \in \omega \}$. We claim that $G$ is made only
    of $g$-large intervals, hence is $g$-transitive. Indeed, if
    $[x_{2n}, x_{2n+2}]$ were $g$-small, then so would be
    $[x_{2n}, x_{2n+1}]$ and $[x_{2n+1}, x_{2n+2}]$, in which case
    $f(x_{2n}, x_{2n+1}, x_{2n+2}) = \langle 0, 0, 0 \rangle$.
  \end{itemize}
  Note that case 3 is the only one which prevents the reduction from
  being uniform, and case 4 is the only one which prevents the
  reduction from constructing an infinite set on which all the
  intervals are $g$-large. In particular,
  $\gap \leq_{sW} \rt^3_{5,3}$.
\end{proof}

\begin{definition}
  A function $g$ is \emph{hyperimmune} if it is not dominated by any
  computable function.  An infinite set $H = \{ x_0 < x_1 < \dots \}$
  is \emph{hyperimmune} if the function $p_H$ defined by
  $p_H(n) = x_n$ is hyperimmune.
\end{definition}

\begin{lemma}\label{lem:domination-gives-transitivity}
  Let $g : \omega \to \omega$ be an increasing function. Every
  function dominating $g$ computes an infinite $g$-transitive set.
\end{lemma}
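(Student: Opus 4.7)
The plan is very short: produce a rapidly growing sequence using the dominating function, and observe that then \emph{every} interval between elements of the sequence is $g$-large, so the $g$-transitivity premise is vacuous.

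More precisely, let $f$ be a function dominating $g$, so $f(x) \geq g(x)$ for all $x$. I would define an $f$-computable sequence $x_0 < x_1 < \dots$ by $x_0 = 0$ and $x_{i+1} = f(x_i) + 1$, and set $H = \{x_i : i \in \omega\}$. Then $H$ is infinite and $f$-computable (hence computable from any function dominating $g$).

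The key observation is that for each $i$, $x_{i+1} > f(x_i) \geq g(x_i)$, so the interval $[x_i, x_{i+1}]$ is $g$-large. Now, given any $x < y$ in $H$, say $x = x_i$ and $y = x_j$ with $i < j$, we have $y \geq x_{i+1} \geq g(x_i) = g(x)$ (using that $g$ is increasing), so $[x, y]$ is $g$-large. In particular, no interval between two elements of $H$ is $g$-small, and the condition defining $g$-transitivity holds vacuously for every triple $x < y < z$ in $H$.

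There is no real obstacle; the lemma is essentially a bookkeeping observation about what dominating $g$ buys you. The only thing worth flagging is the (trivial) appeal to $g$ being increasing, which is what lets us upgrade largeness of consecutive intervals to largeness of all intervals in $H$.
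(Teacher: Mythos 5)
Your proof is correct and is essentially the paper's argument: both build the set by iterating the dominating function $f$ (the paper takes $x_{n+1}=f(x_n)$, you take $f(x_n)+1$, a harmless tweak that guarantees strict increase) and then note that every interval of the resulting set is $g$-large, so $g$-transitivity holds vacuously. The only cosmetic remark is that your appeal to $g$ being increasing is not actually needed for the step $x_j \geq x_{i+1} > g(x_i)$, but this does not affect correctness.
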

\begin{proof}
  Let $f$ be a function dominating $g$. Let
  $H = \{x_0 < x_1 < \dots \}$ be defined by $x_0 = 0$, and
  $x_{n+1} = f(x_n)$. Then every interval in $H$ is $f$-large, hence
  $g$-large, and in particular is $g$-transitive.
\end{proof}

\begin{definition}
  A function $f : \omega \to \omega$ is \emph{diagonally
    non-computable} relative to $X$ if $f(e) \neq \Phi^X_e(e)$ for
  every $e \in \omega$. $\dnc$ is the statement \qt{For every set $X$,
    there is a diagonally non-computable function relative to $X$.}
\end{definition}

We then say that a Turing degree is DNC if it contains a diagonally
non-computable function. The notion of DNC degree is very weak and not
being able to bound such a degree is a good measure of the
computability-theoretic weakness of a problem.

\begin{theorem}\label{thm:gap-preserve-hyperimmune-and-non-dnc}
  Let $g : \omega \to \omega$ be an increasing function and
  $f_0, f_1, \dots$ be a countable sequence of hyperimmune functions.
  Then there is an infinite $g$-transitive set $H$ of non-DNC degree
  such that $f_i$ is $H$-hyperimmune for every $i$.
\end{theorem}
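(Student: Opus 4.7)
The plan is to carry out a Mathias-type forcing construction adapted to preserve $g$-transitivity, meeting countably many requirements simultaneously: $H$ is infinite and $g$-transitive, each $f_i$ is $H$-hyperimmune, and no $\Phi_e^H$ is a diagonally non-computable function. Conditions are pairs $(F, X)$ where $F$ is a finite $g$-transitive set and $X$ is an infinite $g$-transitive subset of $\omega$ with $\min X > g(\max F)$, with the usual Mathias extension relation. Two observations make this forcing work: any subset of a $g$-transitive set is automatically $g$-transitive, so the forcing class is closed under the infinite thinnings required by the density arguments; and the requirement $\min X > g(\max F)$ makes every interval $[x, y]$ with $x \in F$ and $y \in X$ $g$-large, so adjoining any $E \subseteq X$ to $F$ preserves $g$-transitivity. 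In particular, the density of extensions adding a new element is immediate.

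The two main density lemmas follow standard templates. For hyperimmunity preservation of $f_i$ against a functional $\Phi_e$, I would case-split: either there is an infinite $g$-transitive $Y \subseteq X$ above $g(\max F)$ and an input $n$ such that $\Phi_e^{F \cup E}(n)\uparrow$ for every finite $E \subseteq Y$, in which case $(F, Y) \leq (F, X)$ forces $\Phi_e^H(n)$ to diverge, or else a compactness argument produces a bound $u(n)$, effective relative to $X$, on the values $\Phi_e^{F \cup E}(n)$ can take; the hyperimmunity of $f_i$ relative to $X$ then yields an $n$ with $u(n) < f_i(n)$ and a witnessing finite extension forcing $\Phi_e^H(n) < f_i(n)$. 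For non-DNC preservation, the standard Seetapun pigeonhole either forces $\Phi_e^H$ to be partial or produces, for each $e'$, a value $v_{e'}$ forced on a cofinal family of extensions; if some $v_{e'} = \Phi_{e'}(e')$ then a direct extension kills the DNC property, while otherwise $e' \mapsto v_{e'}$ is an $X$-computable DNC function, contradicting non-DNC-ness of $X$.

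The main obstacle is that the compactness bounds in both density lemmas are effective only relative to the oracle $X$ (and implicitly $g$), so one must arrange for $X$ itself to be non-DNC and for each $f_i$ to remain $X$-hyperimmune. Following the scheme used in the proof of Theorem~\ref{thm:grtn-strong-cone-avoidance-direct}, I would restrict the reservoirs from the outset to lie inside a countable Scott set $\Mcal$ containing $g$ and every $f_i$, built so that every member of $\Mcal$ is non-DNC and each $f_i$ is hyperimmune relative to every member of $\Mcal$; such an $\Mcal$ exists by iterating a $\wkl$ basis theorem that preserves both non-DNC-ness and hyperimmunity of countably many functions. A sufficiently generic filter meeting all countably many dense sets then yields the required $H$. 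Alternatively, since the previous section established $\gap \leq_{sc} \semo$, the theorem can be deduced directly from the corresponding simultaneous preservation result for $\semo$, using the $g$-computable stable tournament associated to $g$.
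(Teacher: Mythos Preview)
Your Scott-set setup is internally inconsistent. You ask for a Scott set $\Mcal$ that contains $g$ and every $f_i$, yet has every member of non-DNC degree and keeps each $f_i$ hyperimmune relative to every member. But $g$ is an \emph{arbitrary} increasing function and may itself have DNC degree (for instance, $g$ could be a modulus for $\emptyset'$), so $g \in \Mcal$ already defeats the non-DNC requirement; and $f_i \in \Mcal$ would force $f_i$ to be $f_i$-hyperimmune, which is impossible. More basically, demanding at the outset an infinite $g$-transitive reservoir that is non-DNC and preserves the hyperimmunity of each $f_i$ is essentially assuming the conclusion of the theorem. The fallback via $\gap \leq_{sc} \semo$ would need a simultaneous non-DNC and countable-hyperimmunity preservation theorem for $\semo$ that you have not supplied and that the paper does not establish.

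The paper avoids all of this by taking the reservoirs to be \emph{computable} rather than $g$-transitive, so that the bounding functions in the density lemmas are honestly computable and one may invoke the hyperimmunity of $f_i$ and the non-DNC-ness of $\emptyset$ directly, with no auxiliary oracle. The cost is that an arbitrary finite $E \subseteq X$ need not yield a $g$-transitive $F \cup E$. The device that replaces your Scott-set machinery is purely combinatorial: instead of searching for a single witness $E$, search for a chain $E_0 < E_1 < \cdots < E_{k-1} \subseteq X$ with $\Phi_e^{F \cup E_j}(n)\!\downarrow$ for each $j$ and also $\Phi_e^{F \cup H}(n)\!\downarrow$ for $H = \{\max E_j : j < k\}$. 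If some $E_j$ is $g$-small it is trivially $g$-transitive and can be adjoined to $F$; if every $E_j$ is $g$-large then consecutive elements of $H$ form $g$-large intervals, so $H$ itself is $g$-transitive and can be adjoined instead. When no such chain exists for a given $n$, the set $\{\max E_j : j \in \omega\}$ taken over a maximal sequence of single witnesses is a computable sub-reservoir forcing $\Phi_e^G(n)\!\uparrow$. The non-DNC lemma runs on the same small/large dichotomy. This is the missing idea in your outline.
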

\begin{proof}
  Suppose there is no computable infinite $g$-transitive set,
  otherwise we are done.  We will build the set $H$ using a variant of
  computable Mathias forcing.  A condition is a pair $(F, X)$ where
  $F$ is a finite $g$-transitive set, $X$ is an infinite, computable
  set such that $\max F < \min X$, and $[x, y]$ is $g$-large for every
  $x \in F$ and $y \in X$.  A condition $(E, Y)$ extends $(F, X)$ if
  $F \subseteq E$, $Y \subseteq X$ and $E \setminus F \subseteq X$.

  Every sufficiently generic filter for this notion of forcing yields
  an infinite $g$-transitive set $G$. Indeed, given a condition
  $(F, X)$, one can pick any $x \in X$, add it to $F$, and remove
  finitely many elements from $X$ to obtain an infinite set $Y$ such
  that $[x, y]$ is $g$-large for every $y \in Y$.

\begin{lemma}
  Given a Turing functional $\Phi_e$, some $i \in \omega$ and a
  condition $c$, there is an extension $d$ forcing either $\Phi_e^G$
  to be partial, or $\Phi_e^G$ not to dominate $f_i$.
\end{lemma}
\begin{proof}
  Fix a condition $(F, X)$.  Define the partial computable function
  $h : \omega \to \omega$ which on input $n$, searches for some finite
  sets $E_0 < E_1 < \dots < E_{k-1} \subseteq X$ such that
  $\Phi_e^{F \cup E_j}(n)\downarrow$ for each $j < k$, and
  $\Phi_e^{F \cup H}(n)\downarrow$, where $H = \{\max E_j : j <
  k\}$. If found,
  $h(n) = \max_j \{ \Phi_e^{F \cup E_j}(n), \Phi_e^{F \cup H}(n)
  \}$. Otherwise $h(n) \uparrow$. We have two cases.

  Case 1: $h$ is total. Since $f_i$ is hyperimmune, there is some $n$
  such that $h(n) < f_i(n)$.  Let
  $E_0 < E_1 < \dots < E_{k-1} \subseteq X$ witness that
  $h(n)\downarrow$, and let $Y$ be obtained from $X$ by removing
  finitely many elements so that $[x, y]$ is $g$-large for every
  $x \in E_i$ and $y \in Y$. Either there is some $E_j$ which is
  $g$-small, in which case, $(F \cup E_i, Y)$ is an extension forcing
  $\Phi_e^G(n)\downarrow < f_i(n)$, or for every $j < k$, $E_j$ is
  $g$-large. Then $H$ is made only of large intervals, so is
  $g$-transitive. $(F \cup H, Y)$ is then an extension forcing again
  $\Phi_e^G(n)\downarrow < f_i(n)$.

  Case 2: $h$ is partial, say $h(n)\uparrow$. Let $E_0 < E_1 < \dots$
  be a maximal computable sequence of finite sets such that
  $\Phi_e^{F \cup E_j}(n)\downarrow$ for each $j$. If the sequence is
  finite, then letting $Y = X \setminus \{0, \dots, \max E_j \}$,
  $(F, Y)$ forces $\Phi^G_e(n)\uparrow$.  If the sequence is infinite,
  then letting $Y = \{\max E_j : j \in \omega\}$, the condition
  $(F, Y)$ is an extension forcing again $\Phi^G_e(n)\uparrow$.
\end{proof}

By the previous lemma, $f_i$ is $G$-hyperimmune for every sufficiently
generic for this notion of forcing and every $i \in \omega$.

\begin{lemma}
  Given a Turing functional $\Phi_e$, and a condition $c$, there is an
  extension $d$ forcing either $\Phi_e^G$ to be partial, or
  $\Phi_e^G(n) = \Phi_n(n)$ for some $n \in \omega$.
\end{lemma}
\begin{proof}
  Fix a condition $(F, X)$.  Define a maximal computable sequence
  $E_0 < E_1 < \dots \subseteq X$ such that for every $n$,
  $\Phi_e^{F \cup E_n}(n)\downarrow$.  Suppose first that this
  sequence is finite, with maximum element $E_n$, then
  $(F, X - \{0, \dots, \max E_n\})$ is an extension forcing
  $\Phi_e^G(n+1)\uparrow$.  Suppose now that this sequence is
  infinite. Then, there must be infinitely many $n$ such that
  $\Phi^{F \cup E_n}_e(n) \downarrow = \Phi_n(n)$, otherwise we would
  compute a DNC function. Let
  $W = \{n : \Phi^{F \cup E_n}_e(n) \downarrow = \Phi_n(n) \}$.  If
  $E_n$ is $g$-small for some $n \in W$, then in particular
  $F \cup E_n$ is $g$-transitive. Let $d = (F \cup E_n, Y)$ where $Y$
  is obtained from $X$ by removing finitely many elements so that
  $[x, y]$ is $g$-large for every $x \in E_n$. The condition $d$ is an
  extension of $c$ forcing $\Phi_e^G(n) = \Phi_n(n)$.  If $E_n$ is
  $g$-large for every $n \in W$, then we can compute a function
  dominating $g$, and by
  Lemma~\ref{lem:domination-gives-transitivity}, compute an infinite
  $g$-transitive set, contradicting our assumption.
\end{proof}

By the previous lemma, $G$ is not of DNC degree for every sufficiently
generic for this notion of forcing.  This completes the proof.
\end{proof}

\begin{definition}[Ascending descending sequence]
  Given a linear order~$(L, <_L)$, an \emph{ascending}
  (\emph{descending}) sequence is a set~$S$ such that for
  every~$x <_\Nb y \in S$, $x <_L y$ ($x >_L y$).  $\ads$ is the
  statement ``Every infinite linear order admits an infinite ascending
  or descending sequence''.  $\sads$ is the restriction of $\ads$ to
  orders of type $\omega^{*} + \omega$.
\end{definition}

\begin{corollary}
  $\rca + \dgap \nvdash \sads$.
\end{corollary}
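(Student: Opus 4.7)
The plan is to separate $\dgap$ from $\sads$ by constructing an $\omega$-model $\Mcal$ of $\rca + \dgap$ in which some computable instance of $\sads$ has no solution. The engine is Theorem~\ref{thm:gap-preserve-hyperimmune-and-non-dnc}, which says $\gap$ preserves non-DNC degrees; the goal is to exhibit a computable $\sads$-instance all of whose solutions are of DNC degree, and then iterate $\dgap$-closure while keeping the model DNC-free.

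First, I would fix a computable stable linear order $L$ of type $\omega^* + \omega$ such that every infinite ascending and every infinite descending sequence through $L$ is of DNC degree. Writing $A$ for the $\omega$-part and $B$ for the $\omega^*$-part, both $\Delta^0_2$ and partitioning $\omega$, one checks from the definition of ascending/descending that solutions to $\sads$ are (up to finite difference) just infinite subsets of $A$ or of $B$. The task therefore reduces to making every infinite subset of $A$ and every infinite subset of $B$ compute a DNC function. I would arrange this by a block-synchronized encoding: partition $\omega$ into consecutive blocks $I_0 < I_1 < \dots$ and, using an effective enumeration of a $\Pi^0_1$ class of DNC functions (e.g.\ the Kumabe--Lewis tree), distribute the elements of $I_n$ between $A$ and $B$ so that the positions occupied by $A \cap I_n$ and by $B \cap I_n$ each record the $n$-th bit of a path through the tree. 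Any infinite trajectory on either side then recovers a path, hence a DNC function.

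Second, I would build $\Mcal = \bigcup_s \Mcal_s$, starting from $\Mcal_0$ the computable sets (so $L \in \Mcal_0$), all of which are non-DNC. At stage $s$, I enumerate the $\Delta^0_2$-in-$\Mcal_s$ increasing instances $g$ of $\dgap$ and, applying a relativized form of Theorem~\ref{thm:gap-preserve-hyperimmune-and-non-dnc}, find an infinite $g$-transitive set $H$ such that $H \oplus C$ is of non-DNC degree for every $C \in \Mcal_s$. The relativization adapts the computable Mathias forcing of the original proof to use $\Mcal_s$-computable reservoirs, treats the $\Delta^0_2$ witness of $g$ non-uniformly for checking condition validity, and, since $\Mcal_s$ is countable, diagonalizes against all pairs $(C, \Phi_e)$ by a single sufficiently generic $H$. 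Taking $\Mcal_{s+1}$ to be the Turing ideal generated by $\Mcal_s \cup \{H\}$ preserves the invariant that no element of the model is of DNC degree. A standard bookkeeping ensures every $\Delta^0_2$-instance appearing in $\Mcal$ is eventually processed.

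Then $\Mcal \models \rca + \dgap$ by construction, while $\Mcal$ contains no DNC function, so no $\sads$-solution to $L$ lies in $\Mcal$, yielding $\Mcal \not\models \sads$ and the desired separation. The principal obstacle is the first step: making ascending solutions force DNC is easy (take $A$ suitably sparse and use a principal-function argument), but $B = \omega \setminus A$ cannot simultaneously be sparse, so the descending side requires the synchronized block encoding rather than an appeal to sparsity. The second step is more routine, essentially a relativization of the computable Mathias forcing already given in the proof of Theorem~\ref{thm:gap-preserve-hyperimmune-and-non-dnc}, together with the observation that a $\Delta^0_2$ increasing $g$ can replace a computable one without affecting the preservation of non-DNC.
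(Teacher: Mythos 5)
Your second step (iterating a relativized form of Theorem~\ref{thm:gap-preserve-hyperimmune-and-non-dnc} to build a Turing ideal closed under $\dgap$ while preserving some property) is fine in outline, but the first step is where the proof lives, and it is not just incomplete -- it is impossible. You need a computable stable linear order all of whose infinite ascending or descending sequences are of DNC degree. No such instance can exist: $\cac$ implies $\ads$, hence $\sads$, and by Patey~\cite{Patey2016Partial} (quoted in this paper precisely for this purpose) $\cac$ admits preservation of one constant-bound immunity while $\dnc$ does not, which yields an $\omega$-model of $\rca + \cac$ (hence of $\sads$) containing no DNC function. If your order $L$ existed, every such model containing a solution to $L$ would contain a DNC function, a contradiction. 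This is also visible in your sketch: an infinite subset of the $\omega$-part may skip blocks entirely and carries only positive membership information, so ``the positions occupied by $A \cap I_n$ record the $n$-th bit of a path'' is not something a solution can recover; and you never address the constraint that $A$ and $B$ must arise as the $\omega$- and $\omega^{*}$-parts of a \emph{computable} order of type $\omega^{*}+\omega$, which is far more restrictive than an arbitrary $\Delta^0_2$ partition.

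The paper's proof uses the other half of Theorem~\ref{thm:gap-preserve-hyperimmune-and-non-dnc}, namely preservation of hyperimmunity, and this is the fix: take Tennenbaum's computable linear order $\Lcal$ of type $\omega+\omega^{*}$ with no computable infinite ascending or descending sequence; its $\omega$-part $U$ and $\omega^{*}$-part $V$ are then hyperimmune. Any infinite ascending sequence is (up to finitely many elements) a subset of $U$, so its principal function dominates $p_U$; similarly for descending sequences and $V$. Building the ideal $\Mcal \models \dgap$ so that $p_U$ and $p_V$ remain hyperimmune relative to every element of $\Mcal$ -- which is exactly what the theorem preserves, for countably many functions simultaneously -- guarantees that $\Mcal$ contains no solution to $\Lcal$, hence $\Mcal \not\models \sads$. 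So the iteration you describe is the right skeleton, but the property to preserve must be hyperimmunity of the two principal functions, not non-DNC-ness.
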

\begin{proof}
  By Tennenbaum (see Rosenstein~\cite{Rosenstein1982Linear}), there is
  a computable linear ordering $\Lcal$ of order type
  $\omega+\omega^{*}$ with no computable infinite ascending or
  descending sequence. Let $U$ and $V$ the the $\omega$ and the
  $\omega^{*}$ part, respectively. In particular, both $U$ and $V$
  must be hyperimmune. By a relativization of
  Theorem~\ref{thm:gap-preserve-hyperimmune-and-non-dnc}, there is a
  Turing ideal $\Mcal \models \dgap$ such that $U$ and $V$ are
  hyperimmune relative to every element of this model.  However, $U$
  is not hyperimmune relative to any infinite ascending sequence for
  $\Lcal$, and the $V$ part is not hyperimmune relative to any
  infinite descending sequence for $\Lcal$.  Therefore,
  $\Mcal \not \models \sads$.
\end{proof}

\begin{corollary}
  $\rca + \dgap \nvdash \dnc$.
\end{corollary}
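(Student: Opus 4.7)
\medskip\noindent\textbf{Proof proposal.}
The plan is to build a countable $\omega$-model $\Mcal$ of $\rca + \dgap$ in which every set is of non-DNC degree; since $\dnc$ applied with $X = \emptyset$ would require $\Mcal$ to contain a DNC function, this suffices to refute $\dnc$ in $\Mcal$. The argument mirrors the proof of the preceding corollary $\rca + \dgap \nvdash \sads$, with preservation of non-DNC-degree replacing preservation of hyperimmunity.

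The key ingredient is a relativization of Theorem~\ref{thm:gap-preserve-hyperimmune-and-non-dnc}: for every set $Z$ of non-DNC degree and every increasing function $g : \omega \to \omega$, there is an infinite $g$-transitive set $H$ such that $H \oplus Z$ is of non-DNC degree. I expect this to be a routine adaptation of the original proof, using $Z$-computable Mathias forcing in place of computable Mathias forcing. The relativized non-DNC key lemma asserts: if the maximal $Z$-computable sequence $E_0 < E_1 < \cdots \subseteq X$ with $\Phi_e^{(F \cup E_n) \oplus Z}(n)\downarrow$ is infinite, and only finitely many $n$ satisfy $\Phi_e^{(F \cup E_n) \oplus Z}(n) = \Phi_n(n)$, then after modifying finitely many values $Z$ itself would compute a DNC function, contradicting the hypothesis on $Z$. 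The hyperimmunity-preservation clause of Theorem~\ref{thm:gap-preserve-hyperimmune-and-non-dnc} is not needed here.

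Given the relativized theorem, $\Mcal$ is constructed by a standard iteration. Start with the ideal of computable sets and enumerate all pairs $(Z, g)$ with $Z$ in the current ideal and $g$ a $\Delta^0_2(Z)$ increasing function; at each stage apply the relativized theorem (it imposes no computability hypothesis on $g$) to obtain an infinite $g$-transitive set $H$ with $H \oplus Z$ non-DNC, and close the ideal under Turing reducibility and joins with $H$. The union is a countable Turing ideal, hence an $\omega$-model of $\rca$, satisfies $\dgap$ by construction, and contains only sets of non-DNC degree; in particular it contains no DNC function, so $\Mcal \not\models \dnc$. The only nontrivial point is the relativization of the non-DNC key lemma, but it follows the same pattern as in Theorem~\ref{thm:gap-preserve-hyperimmune-and-non-dnc}.
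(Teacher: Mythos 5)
Your proposal is correct and follows essentially the same route as the paper: the paper's proof also invokes a relativized version of Theorem~\ref{thm:gap-preserve-hyperimmune-and-non-dnc} (only its non-DNC clause is relevant) and iterates it to build a Turing ideal satisfying $\dgap$ that contains no DNC function, whence $\dnc$ fails. Your spelled-out relativization of the key lemma, using the hypothesis that $Z$ is of non-DNC degree to derive the contradiction, is exactly the intended adaptation.
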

\begin{proof}
  By a relativized version of
  Theorem~\ref{thm:gap-preserve-hyperimmune-and-non-dnc}, there is a
  Turing ideal $\Mcal \models \dgap$ with no DNC function.  In
  particular, $\Mcal \not \models \dnc$.
\end{proof}


\begin{theorem}\label{thm:dgap-escapes-delta2}
  For every $\Delta^0_2$ function $f$, there is a $\Delta^0_2$
  increasing function $g$ such that $f$ does not dominate $p_H$ for
  any infinite $g$-transitive set $H$.
\end{theorem}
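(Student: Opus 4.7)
The plan is to define $g$ by a simple closed formula that encodes $f$. Specifically, set
$$
g(n) \;=\; n + 1 + \max_{i \leq n+1} f(i).
$$
Since $f$ is $\Delta^0_2$, the value $g(n)$ is uniformly $\emptyset'$-computable, hence $g$ is $\Delta^0_2$; by construction it is strictly increasing and satisfies $g(n) > f(n+1)$ for every $n$. The whole point of this choice is that as soon as we can lower bound some element $x_j$ of a potential solution $H$ by $j$, largeness of the interval $[x_j, x_{j+1}]$ will automatically force $x_{j+1}$ past $f(j+1)$.

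The main combinatorial observation is that an infinite $g$-transitive set must contain infinitely many $g$-large consecutive intervals. Let $H = \{x_0 < x_1 < \dots\}$ be infinite and $g$-transitive, and suppose for contradiction that all but finitely many intervals $[x_j, x_{j+1}]$ are $g$-small. Pick $j_0$ beyond this threshold. Iterating the transitivity hypothesis along $x_{j_0}, x_{j_0+1}, \dots, x_k$ one obtains that $[x_{j_0}, x_k]$ is $g$-small for every $k > j_0$, i.e.\ $x_k < g(x_{j_0})$. Since $g(x_{j_0})$ is a fixed natural number, this bounds $H$ — contradicting infiniteness.

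Now fix any infinite $g$-transitive set $H$ and let $J = \{ j : [x_j, x_{j+1}] \text{ is } g\text{-large}\}$, which we have just shown to be infinite. For each $j \in J$ we have, using $x_j \geq j$ and monotonicity of $g$,
$$
x_{j+1} \;\geq\; g(x_j) \;\geq\; g(j) \;>\; f(j+1).
$$
So $p_H(j+1) = x_{j+1} > f(j+1)$ for infinitely many $j$, which means $f$ does not dominate $p_H$. Since $H$ was an arbitrary infinite $g$-transitive set, the conclusion of the theorem follows.

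There is essentially no obstacle beyond the basic verification that $g$ as defined is genuinely $\Delta^0_2$ and strictly increasing, and that transitivity really does propagate across arbitrarily long small chains; both are routine. The key structural insight is simply that $g$-transitivity turns any infinite solution into one forced to take infinitely many $g$-large steps, each of which the growth rate of $g$ converts directly into a witness that $p_H$ escapes $f$.
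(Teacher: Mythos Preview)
Your argument is correct and follows essentially the same approach as the paper: define $g$ so that $g(n)\geq f(n+1)$, observe that an infinite $g$-transitive set must contain a $g$-large consecutive interval (you prove infinitely many, the paper settles for one, which already suffices under the paper's ``for every $x$'' definition of domination), and then use $x_j\geq j$ to turn largeness into an escape point. The only cosmetic differences are that you add the $n+1$ term to guarantee strict monotonicity and that you iterate transitivity to get infinitely many large intervals rather than just one.
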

\begin{proof}
  Let $g : \omega \to \omega$ be the $\Delta^0_2$ function which on
  input $x$, returns $\max \{ f(y) : y \leq x + 1 \}$. Let
  $H = \{x_0 < x_1 < \dots\}$ be an infinite $g$-transitive set. We
  claim that $f$ does not dominate $p_H$. Since $H$ is $g$-transitive,
  there must be some $n$ such that $[x_n, x_{n+1}]$ is $g$-large,
  otherwise $[x_i, x_j]$ would be $g$-small for every $i < j$. In
  particular, $p_H(n+1) = x_{n+1} \geq g(x_n) \geq f(n+1)$ since
  $n \leq x_n$. This completes the proof.
\end{proof}

\begin{corollary}
  Every $\omega$-model of $\dgap$ is a model of $\amt$.
\end{corollary}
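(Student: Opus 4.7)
The plan is to invoke the standard characterization of $\amt$ in $\omega$-models due to Hirschfeldt, Shore and Slaman, which says that an $\omega$-model $\Mcal$ satisfies $\amt$ if and only if, for every $X \in \Mcal$ and every function $f$ that is $\Delta^0_2$ in $X$, there exists a function $g \in \Mcal$ that is not dominated by $f$. Once this is in hand, the result reduces almost immediately to Theorem~\ref{thm:dgap-escapes-delta2}, which was designed precisely to produce such escape functions as principal functions of $g$-transitive sets.

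Concretely, I would fix an $\omega$-model $\Mcal \models \dgap$, fix $X \in \Mcal$, and fix a function $f$ that is $\Delta^0_2$ in $X$. Relativizing the construction of Theorem~\ref{thm:dgap-escapes-delta2} to $X$ yields a $\Delta^0_2(X)$ increasing function $g$ (explicitly, $g(x) = \max\{f(y) : y \leq x+1\}$ with the parameter $X$ built in) with the property that $f$ does not dominate $p_H$ for any infinite $g$-transitive set $H$. Since $g$ is $\Delta^0_2(X)$ with $X \in \Mcal$, applying $\dgap$ inside $\Mcal$ produces an infinite $g$-transitive set $H \in \Mcal$, and then $p_H \in \Mcal$ is the desired escape function for $f$.

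The only real subtlety — and hence what I would take to be the main point to verify — is the interpretation of $\dgap$ inside the $\omega$-model: the $\Delta^0_2$ function $g$ is not itself a member of $\Mcal$ in general, so one must check that $\dgap$, read as a $\Pi^1_2$ principle over $\rca$, does apply to every function that is $\Delta^0_2$ relative to a parameter from $\Mcal$. This is standard (a $\Delta^0_2$ function in an $\omega$-model is coded by a pair of $X$-c.e.\ indices with $X \in \Mcal$, and $\Mcal$ being closed under Turing reducibility ensures the statement is meaningful), but it is the step one needs to be careful about. Granting that, the argument is a one-line application of Theorem~\ref{thm:dgap-escapes-delta2} together with the escape characterization of $\amt$.
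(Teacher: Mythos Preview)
Your proposal is correct and follows essentially the same approach as the paper: invoke the escape characterization of $\amt$ in $\omega$-models (the paper attributes this to Csima et al.\ and Conidis rather than Hirschfeldt--Shore--Slaman), then apply the relativized form of Theorem~\ref{thm:dgap-escapes-delta2}. The paper's proof is terser---it leaves the relativization and the care about interpreting $\dgap$ inside the $\omega$-model implicit---but the substance is identical.
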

\begin{proof}
  Csima et al.~\cite{Csima2004Bounding} and
  Conidis~\cite{Conidis2008Classifying} proved that $\amt$ is
  computably equivalent to the statement \qt{For every $\Delta^0_2$
    function $f : \omega \to \omega$, there is a function not
    dominated by $f$.} Apply Theorem~\ref{thm:dgap-escapes-delta2}.
\end{proof}

\begin{theorem}\label{thm:dgap-low-solutions}
  For every $\Delta^0_2$ increasing function $g$, there is an infinite
  $g$-transitive set $H$ of low degree.
\end{theorem}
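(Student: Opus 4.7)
The plan is to build the desired set $H$ by a Mathias-style forcing with $\emptyset'$-computable reservoirs, employing a first-jump control argument to guarantee lowness. A \emph{condition} is a pair $(F, X)$ in which $F$ is a finite $g$-transitive set, $X$ is an infinite $\emptyset'$-computable set with $\min X > \max F$, and every $y \in X$ satisfies $y \geq g(\max F)$, so that every interval $[x, y]$ with $x \in F$ and $y \in X$ is $g$-large. Extension is the usual Mathias notion. Since $g$ is $\Delta^0_2$, the predicate ``$y \geq g(x)$'' and the finitary notion of $g$-transitivity are $\emptyset'$-decidable, so conditions admit a uniform $\emptyset'$-computable representation.

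The reservoir property makes $g$-transitivity easy to preserve under extension: if $y \in X$, then every interval $[x, y]$ with $x \in F$ is $g$-large, so adding $y$ to $F$ cannot create any transitivity violation involving $y$ as top element, and $F \cup \{y\}$ remains $g$-transitive. Iterating, any finite $F' \subseteq X$ for which $F \cup F'$ is $g$-transitive (the only genuine constraint being transitivity internal to $F'$) yields an extension whose reservoir is $\{y \in X : y > \max F' \text{ and } y \geq g(\max F')\}$, cofinite in $X$ and hence still infinite and $\emptyset'$-computable.

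I then build a $\emptyset'$-computable descending sequence of conditions $c_0 \geq c_1 \geq \cdots$ meeting two interleaved families of requirements. At even stages I handle infiniteness by adjoining $\min X$ to $F$. At stage $2e+1$ I handle the lowness requirement for index $e$ by asking $\emptyset'$ the $\Sigma^0_1(\emptyset')$ question: does there exist a finite $F' \subseteq X$ with $F \cup F'$ being $g$-transitive and $\Phi_e^{F \cup F'}(e) \downarrow$? In the positive case I take such an $F'$ and extend, which forces convergence. In the negative case I retain the current condition; then any hypothetical $\Phi_e^H(e) \downarrow$ with use $u$ would produce $F' := H \cap (\max F, u]$ as a witness for the positive case, a contradiction. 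Hence the sequence of yes/no answers is $\emptyset'$-computable and exactly describes the jump of the generic.

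The main technical obstacle is to verify that this forcing relation is $\emptyset'$-decidable uniformly in the condition, which in turn requires that every reservoir occurring in the construction admits a canonical $\emptyset'$-index and remains infinite. This hinges on $g$ being total, increasing, and $\Delta^0_2$: shrinking $X$ to $X \cap (g(\max F), \infty)$ removes only finitely many elements, and $g$-largeness is decidable from $\emptyset'$. Once these bookkeeping details are in place, the resulting $H = \bigcup_s F_s$ is infinite, $g$-transitive, and $\Delta^0_2$, and satisfies $H' \leq_T \emptyset'$ via the decoding described above, so $H$ has low Turing degree.
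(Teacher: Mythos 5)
Your forcing framework and the verification that the generic set is infinite and $g$-transitive are fine, but the lowness argument has a genuine gap at its central step. You propose to ``ask $\emptyset'$'' the question: does there exist a finite $F' \subseteq X$ with $F \cup F'$ $g$-transitive and $\Phi_e^{F \cup F'}(e)\downarrow$? As you yourself note, this question is $\Sigma^0_1(\emptyset')$, i.e.\ $\Sigma^0_2$ (both ``$F' \subseteq X$'' and finite $g$-transitivity are only $\emptyset'$-decidable), and $\emptyset'$ cannot decide $\Sigma^0_2$ questions --- it can only verify a positive witness, never confirm the negative case. So the claim ``the sequence of yes/no answers is $\emptyset'$-computable'' is exactly the point that fails; run naively, your construction needs $\emptyset''$ and yields no better than $H' \leq_T \emptyset''$. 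This is the standard obstruction to getting lowness from Mathias forcing with $\Delta^0_2$ reservoirs, and the ``bookkeeping'' about $\emptyset'$-indices for the reservoirs does not remove it.

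The paper's proof is built precisely to dodge this. It drops the reservoir (plain finite extension method) and, at stage $e$, has $\emptyset'$ search in parallel for a witness to one of two $\Sigma^0_2$ events, each of whose witnesses $\emptyset'$ \emph{can} verify: (i) a finite $E > F_e$ with $F_e \cup E$ $g$-transitive and $\Phi_e^{F_e \cup E}(e)\downarrow$ (a $\Delta^0_2 \wedge \Sigma^0_1$ check), or (ii) a bound $n$ such that $\Phi_e^{F_e \cup E}(e)\uparrow$ for \emph{every} non-empty finite $E > n$, with no transitivity or reservoir restriction (a $\Pi^0_1$ check). The heart of the proof is the combinatorial lemma guaranteeing that one of (i), (ii) always holds: if (ii) fails one gets a computable sequence $E_0 < E_1 < \cdots$ of convergence witnesses; if cofinitely many are $g$-large one can computably dominate $g$ and hence compute an infinite $g$-transitive set (contradicting the harmless initial assumption that no such computable set exists), and otherwise some $g$-small $E_i$ placed beyond a $g$-large gap over $\max F_e$ gives a $g$-transitive witness for (i). Your proposal contains no analogue of this dichotomy and never uses the no-computable-solution assumption, which is the sign that the key idea is missing; to repair it you would have to import essentially this lemma so that the forcing question never needs to be decided outright.
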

\begin{proof}
  Suppose there is no computable infinite $g$-transitive set,
  otherwise we are done.  We construct the set $H$ by the finite
  extension method. For this, we define a $\Delta^0_2$ sequence
  $F_0 \subsetneq F_1 \subsetneq \dots$ of finite $g$-transitive sets
  such that $F_{n+1} \setminus F_n > F_n$, and then set
  $H = \bigcup_n F_n$.  Start with $F_0 = \{0\}$.  At stage $e$,
  search $\emptyset'$-computably for one of the following:
  \begin{itemize}
  \item[(i)] Some non-empty finite set $E > F_e$ such that
    $F_e \cup E$ is $g$-transitive, and
    $\Phi_e^{F_e \cup E}(e)\downarrow$
  \item[(ii)] Some $n$ such that for every non-empty finite set
    $E > n$, $\Phi_e^{F_e \cup E}(e)\uparrow$.
  \end{itemize}
  We claim that one of the two must be found. Indeed, suppose (ii) is
  not found. Then there is an infinite computable sequence of
  non-empty finite sets $E_0 < E_1 < \dots$ such that
  $\Phi_e^{F_e \cup E_i}(e)\downarrow$.  If $E_i$ is $g$-large
  (meaning that $[\min E_i, \max E_i]$ is $g$-large) for all but
  finitely many $i$, then we can computably dominate the function
  $g$. Any function $f$ dominating $g$ computes a $g$-transitive set
  by constructing an infinite set consisting only of $f$-large (hence
  $g$-large) intervals. This contradicts the assumption that there is
  no computable infinite $g$-transitive set. If $E_i$ is $g$-small for
  infinitely many $i$, then pick some $i$ such that
  $[\max F_e, \min E_i]$ is $g$-large. Then $F_e \cup E_i$ is
  $g$-transitive, and we are in the case (i).

  If we are in case (i), set $F_{e+1} = F_e \cup E$, and if we are in
  case (ii), pick some $x > n$ such that $[\max F_e, x]$ is $g$-large,
  and set $F_{e+1} = F_e \cup \{x\}$. This completes the construction.
\end{proof}

\begin{corollary}
  $\rca + \dgap \nvdash \semo$.
\end{corollary}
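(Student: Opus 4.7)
The plan is to mirror the approach of the two preceding corollaries: build an $\omega$-model $\Mcal \models \dgap$ equipped with a preservation property strong enough to falsify some computable instance of $\semo$. Starting from a Tennenbaum computable linear ordering $\Lcal$ of order type $\omega + \omega^{*}$ whose $\omega$-part $U$ and $\omega^{*}$-part $V$ are both hyperimmune, a relativized, iterated application of Theorem~\ref{thm:gap-preserve-hyperimmune-and-non-dnc} produces a Turing ideal $\Mcal$ modeling $\dgap$ in which $U$ and $V$ remain hyperimmune and no element computes a DNC function.

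To close the argument I would exhibit a computable stable tournament $T$ whose every infinite transitive subtournament either makes $U$ non-hyperimmune (by computing an infinite subset of $U$, whose principal function dominates $p_U$), makes $V$ non-hyperimmune analogously, or computes a DNC function. Since $\Mcal$ rules out all three possibilities, no transitive subtournament of $T$ lies in $\Mcal$, hence $\Mcal \not\models \semo$ and therefore $\rca + \dgap \nvdash \semo$.

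The main technical obstacle is the construction of $T$. A direct tournament encoding of $\Lcal$ (setting $T(i,j)=1$ for $i <_\omega j$ iff $i <_{\Lcal} j$) produces a tournament that is transitive as a whole, on which every infinite set is trivially a solution and no computational power is forced on the solutions. The correct construction must blend the structure of $\Lcal$ with a diagonalization against Turing functionals, in the spirit of Seetapun-type arguments for stable Ramsey-type principles, so as to force each transitive subtournament to align with the $\omega$-part or $\omega^{*}$-part of $\Lcal$ or else to encode a DNC function. This combinatorial-computational design of $T$ is the heart of the argument; once $T$ is in hand, the preservation properties of $\dgap$ collected in Section~\ref{sect:gap-principle} finish the proof essentially routinely, in the same short style as the two preceding corollaries.
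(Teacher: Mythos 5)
Your plan is not a proof: everything hinges on a computable stable tournament $T$ all of whose infinite transitive subtournaments either compute an infinite subset of $U$ or of $V$, or compute a DNC function, and you never construct it --- you explicitly flag it as ``the heart of the argument'' and leave it as an obstacle. Worse, there is good reason to believe no such $T$ exists. Producing it would amount to proving new lower bounds on $\semo$ itself: the hyperimmunity disjuncts cannot carry the argument, since the Erd\H{o}s--Moser theorem (hence $\semo$) is known to admit preservation of hyperimmunity (Patey), so no computable tournament can force every solution to destroy the hyperimmunity of a fixed pair $U, V$; and relying on the DNC disjunct alone would require showing that some computable instance of $\semo$ forces all solutions to be of DNC degree, i.e.\ essentially that $\semo$ implies $\dnc$ over $\omega$-models, which is not available (and not what the known results suggest). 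In short, the preservation properties you collect for $\dgap$ (hyperimmunity, non-DNC) are not ones that $\semo$ is known to lack, so they cannot separate the two principles.

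The paper separates them with a different invariant for which the required facts are already known on both sides: lowness. Theorem~\ref{thm:dgap-low-solutions} shows every $\Delta^0_2$ increasing function has an infinite $g$-transitive set of low degree; relativizing and iterating (low over low is low) yields a Turing ideal $\Mcal \models \rca + \dgap$ consisting entirely of low sets. By Kreuzer's theorem there is a computable stable tournament with no low infinite transitive subtournament, so $\Mcal \not\models \semo$, giving $\rca + \dgap \nvdash \semo$ in two lines. If you want to salvage your approach, you would first have to prove the missing lower bound on $\semo$ (an instance defeating non-DNC, low, or similarly weak solutions), which is precisely the content the paper instead imports from Kreuzer.
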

\begin{proof}
  Using a relativized version of Theorem~\ref{thm:dgap-low-solutions},
  we can build a Turing ideal $\Mcal \models \dgap$ with only low
  sets.  By Kreuzer~\cite{Kreuzer2012Primitive},
  $\Mcal \not \models \semo$.
\end{proof}


\begin{definition}
  An infinite set $H$ is \emph{immune} if it has no computable
  infinite subset.  A infinite set $H$ is \emph{$k$-immune} if there
  is no computable sequence of non-empty sets $F_0, F_1, \dots$ such
  that $F_i > i$, $F_i$ has at most $k$ elements, and
  $F_i \cap H \neq \emptyset$.  An infinite set $H$ is
  \emph{constant-bound immune} if it is $k$-immune for every $k$.
\end{definition}

\begin{definition}
  A function $g$ is \emph{left-c.e.} if $\{(x, v) : g(x) \geq v\}$ is
  c.e.
\end{definition}

We are actually interested in the case when the function $g$ is a
modulus for the halting set. In particular, $\emptyset'$ admits a
left-c.e.\ modulus $\mu_{\emptyset'}$. With the extra assumption that
the function $g$ is left-c.e., we can obtain a stronger preservation
property, namely, preservation of countably many immune sets. The
second theorem can be used to construct models of $\cac$ together with
the statement \qt{For every $X$, there is an infinite transitive set
  for the modulus of $X'$\ } which is not a model of $\dnc$, since
$\cac$ admits preservation of one constant-bound immunity which $\dnc$
does not (see Patey~\cite{Patey2016Partial}).

\begin{theorem}
  Let $g$ be an increasing left-c.e.\ function, $B_0, B_1, \dots$ be a
  countable sequence of immune sets.  Then there is an infinite
  $g$-transitive set $H$ such that $B_i$ is $H$-immune for every $i$.
\end{theorem}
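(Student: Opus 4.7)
The plan is to use Mathias forcing, in the spirit of Theorem~\ref{thm:gap-preserve-hyperimmune-and-non-dnc}. A condition is a pair $(F, X)$ where $F$ is a finite $g$-transitive set, $X$ is an infinite computable set with $\max F < \min X$, and $[y,x]$ is $g$-large for every $y \in F$ and $x \in X$. Extension is the usual Mathias extension preserving $g$-transitivity of the stem and $g$-largeness of the crossing intervals. A sufficiently generic filter yields an infinite $g$-transitive set $H = \bigcup_s F_s$.

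For each pair $(e, i)$, the requirement $R_{e, i}$ asserts that $\Phi_e^H$ is not an infinite subset of $B_i$. The key density lemma is: every condition $c = (F, X)$ has an extension $d \leq c$ forcing $R_{e, i}$. Once this is established, a standard enumeration of the $R_{e, i}$ produces a filter whose union $H$ satisfies the theorem.

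To prove the density lemma, I would consider the set
\[
V = \{\, n : \exists E \subseteq X \text{ finite}, \ F \cup E \text{ is } g\text{-transitive}, \ n \in \Phi_e^{F \cup E}\,\}.
\]
If $V$ is finite, then every extension $H$ of $c$ satisfies $\Phi_e^H \subseteq V$, since every element of $\Phi_e^H$ arises from some finite $g$-transitive initial segment of $H$; so $\Phi_e^H$ is finite and $R_{e, i}$ is forced. If instead $V$ is infinite and---crucially---$V$ is c.e., then immunity of $B_i$ implies $V \not\subseteq B_i$; picking $n \in V \setminus B_i$ with witness $E$, the extension $d = (F \cup E, \{y \in X : y > \max E\})$ forces $n \in \Phi_e^H \setminus B_i$, giving $R_{e, i}$.

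The main obstacle is showing $V$ is c.e., since $g$-transitivity of a finite set is $\Pi^0_2$ in general. Here the left-c.e. hypothesis on $g$ is used essentially, via the observation (noted in the discussion following Lemma~\ref{ceapproximations}) that if $[a,b]$ is $g$-small and $[b,c]$ is $g$-large then $[a,b]$ is $g_c$-small. Using the fact that $F \cup E$ is $g$-transitive iff $E$ is (a consequence of the $g$-largeness between $F$ and $X$), I would reformulate $V$ by restricting to finite $E \subseteq X$ equipped with a ``buffer'' maximum $c \in X$ with $c \geq g(\max(E \setminus \{c\}))$. Then every $g$-small interval relevant to transitivity of $F \cup E$ is witnessed by stage $c$, so transitivity reduces to a finite check in $g_c$, and the buffered $V$ becomes c.e. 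The delicate part is verifying both that every $\Phi_e^H$-output for a generic $H$ is captured by some buffered witness (so that finite buffered $V$ really forces $\Phi_e^H$ finite) and that each buffered witness admits a valid forcing extension---this is where one must carefully combine the $g_c$-localization of smallness with the left-c.e.\ approximations from Lemma~\ref{ceapproximations}.
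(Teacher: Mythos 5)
Your overall architecture (Mathias-style conditions with $g$-large crossing gaps, plus a dichotomy on a witness set $V$ and an immunity argument) is reasonable, but there is a genuine gap exactly where you flag the ``delicate part'', and the buffering device you propose does not close it. The buffer condition $c \geq g(\max(E \setminus \{c\}))$ asserts that an interval is $g$-large, and for a left-c.e.\ function $g$ largeness is a $\Pi^0_1$ fact that can never be certified at a finite stage (only smallness is a c.e.\ event). Hence your ``buffered $V$'' is $\Sigma^0_2$ rather than c.e., and the immunity step (an infinite c.e.\ subset of $B_i$ contradicts immunity) does not apply to it. If you weaken the buffer to something finitely checkable, say replacing $g$-facts by $g_c$-facts, the set does become c.e., but its witnesses are no longer guaranteed to be genuinely $g$-transitive: an interval that looks large at stage $c$ may later be revealed $g$-small, producing a triple that violates transitivity, and then the proposed extension $(F \cup E, \dots)$ is not a valid condition. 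So c.e.-ness of the witness set and genuineness of its witnesses pull in opposite directions, and your sketch does not resolve that tension; note also that your finite case is argued for the full (non-c.e.) $V$ while the infinite case needs the restricted one, and the capture argument bridging the two is precisely what is left open.

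The paper's proof avoids this tension by never attempting to certify largeness inside a witness. It is a plain finite-extension construction: at stage $s = \langle e, i \rangle$ it fixes a threshold $t$ with $[\max F_s, t]$ $g$-large and then searches, for each $n$, only over \emph{$g$-small} finite sets $E_n \geq \max(t,n)$ such that $W_e^{F_s \cup E_n}$ meets $(n, \infty)$. Smallness of the block $[\min E_n, \max E_n]$ is a c.e.\ event, so the search is computable, and a $g$-small block placed beyond the $g$-large gap at $t$ makes $F_s \cup E_n$ automatically $g$-transitive (all intervals inside the block are small, all crossing intervals are large), so no transitivity certificate is ever needed. If every $E_n$ is defined, then $\bigcup_n W_e^{F_s \cup E_n}$ is c.e., so by immunity some $W_e^{F_s \cup E_n} \not\subseteq B_i$ and one extends by that block; if some $E_n$ is undefined, one adds the single point $t$ and satisfies the requirement with $W_e^H$ finite. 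To repair your argument, restrict the witness sets $E$ to $g$-small blocks beyond a $g$-large threshold in this way, rather than buffering arbitrary $g$-transitive $E$; that restriction is the missing idea.
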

\begin{proof}
  We construct the set $H$ by the finite extension method. For this,
  we define a sequence $F_0 \subsetneq F_1 \subsetneq \dots$ of finite
  $g$-transitive sets such that $F_{n+1} \setminus F_n > F_n$, and
  then set $H = \bigcup_n F_n$. At stage $s = \langle e, i\rangle$, we
  want to satisfy the following requirement:
$$
\Rcal_{e,i} \mbox{ : } W_e^H \mbox{ is finite or } W_e^H \not
\subseteq B_i
$$
Assume $F_s$ is already defined, and fix some threshold $t$ such that
$[\max F_s, t]$ is $g$-large.  For every $n$, computably search for a
$g$-small finite set $E_n \geq \max(t, n)$ such that
$W_e^{F_s \cup E_n} \cap (n, \infty) \neq \emptyset$. If not found,
$E_n$ is undefined. Note that this search can be made computably since
the function $g$ is left-c.e., so being $g$-small is a c.e.\ event.
If there is some $n$ such that $E_n$ is undefined, then set
$F_{s+1} = F_s \cup \{t\}$. We have ensured that $\max W_e^H \leq n$.
If $E_n$ is defined for every $n$, then there must be some $n$ such
that $W_e^{F_s \cup E_n} \not \subseteq B_i$, otherwise
$\bigcup_n W_e^{F_s \cup E_n}$ would be an infinite c.e.\ subset of
$B_i$, contradicting immunity of $B_i$. Then letting
$F_{s+1} = F_s \cup E_n$, we ensured that $W_e^H \not \subseteq
B_i$. We then go to the next stage.  This completes the construction.
\end{proof}

\begin{theorem}
  Let $g$ be an increasing $\Delta^0_2$ function dominating
  $\mu_{\emptyset'}$, and let $B_0, B_1, \dots$ be a countable
  sequence of constant-bound immune sets.  Then there is an infinite
  $g$-transitive set $G$ such that $B_i$ is constant-bound $G$-immune
  for every $i$.
\end{theorem}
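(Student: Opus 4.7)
The plan is to mirror the proof of the preceding theorem, with two modifications to handle the weaker assumption on $g$ (now only $\Delta^0_2$ rather than left-c.e.) and the stronger preservation property (constant-bound immunity). Let $\mu = \mu_{\emptyset'}$, which is left-c.e.\ and dominated by $g$ by hypothesis. Because $g \geq \mu$ pointwise and both are increasing, any $\mu$-small interval is automatically $g$-small; hence any finite set $E$ that is $\mu$-small is $g$-small. We therefore build $G = \bigcup_s F_s$ so that each increment $F_{s+1} \setminus F_s$ is either a $\mu$-small finite set placed beyond a $g$-large gap from $F_s$, or a single such element, and $G$ will come out $g$-transitive by exactly the argument of the preceding theorem. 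The payoff is that $\mu$-smallness, being a c.e.\ condition, supports \emph{computable} searches---which is precisely what constant-bound immunity needs in order to derive a contradiction.

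At stage $s = \langle e, i, k\rangle$ we have $F_s$ and a threshold $t_s$ with $[\max F_s, t_s]$ $g$-large, and we want to satisfy the requirement $\Rcal_{e,i,k}$ stating that $\Phi_e^G$ is not a $k$-bad sequence for $B_i$; that is, we must prevent $\Phi_e^G(j)\downarrow = H_j$ for every $j$ with $H_j > j$, $|H_j| \leq k$, and $H_j \cap B_i \neq \emptyset$. For each $n \geq s$ we computably search for a $\mu$-small finite set $E \geq \max(t_s, n)$ together with some $j \geq n$ such that $\Phi_e^{F_s \cup E}(j)\downarrow = H$ with $H > j$ and $|H| \leq k$; let $(E_n, j_n, H_n)$ be the first such triple found, undefined otherwise. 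The assignment $n \mapsto H_n$ is then partial computable.

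Three cases arise. First, if $H_n \cap B_i = \emptyset$ for some $n$ with $H_n$ defined, set $F_{s+1} = F_s \cup E_n$ and pick $t_{s+1}$ beyond the use of the witnessing computation, locking in $\Phi_e^G(j_n) = H_n \not\subseteq B_i$ and thereby satisfying $\Rcal_{e,i,k}$. Second, if $H_n$ is defined and intersects $B_i$ for \emph{every} $n \geq s$, then $(H_n)_{n \geq s}$ is a computable sequence with $H_n > n$, $|H_n| \leq k$, $H_n \cap B_i \neq \emptyset$, contradicting the $k$-immunity of $B_i$---so this case cannot occur. Otherwise $H_n$ is undefined for some $n$; set $F_{s+1} = F_s \cup \{t_s\}$, and conclude that no subsequent $\mu$-small chunk added beyond $\max(t_s, n)$ can cause $\Phi_e^G$ to output a valid $H_j$ with $j \geq n$, so $\Rcal_{e,i,k}$ is satisfied at $j = n$.

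The main technical obstacle lies in this last case: a convergent $\Phi_e^G(j)\downarrow$ uses only finitely much of $G$, but that finite oracle segment may straddle several $\mu$-small chunks separated by $g$-large gaps and so need not itself be $\mu$-small. We handle this exactly as in the preceding two theorems, by inductively choosing the thresholds $t_s$ to grow fast enough that any oracle segment relevant to a bounded-use computation is trapped inside a single $\mu$-small chunk. With this bookkeeping, the resulting $G$ is infinite and $g$-transitive, and by construction $B_i$ is constant-bound $G$-immune for every $i$.
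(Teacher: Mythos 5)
Your construction breaks down at the ``undefined'' case, and the proposed repair does not work. Your search at stage $s$ ranges only over \emph{single} $\mu$-small finite sets $E$, but the portion of $G \setminus F_s$ used by a later convergent computation $\Phi_e^G(j)$ is in general a union of several chunks separated by $g$-large gaps; such a union is $g$-large, in particular not $\mu$-small, and so is never examined by your search. Hence from ``no $\mu$-small $E \geq \max(t_s,n)$ yields a convergent output'' you cannot conclude that the requirement is met: a functional may be designed to converge only on oracles containing two elements far apart, and later stages must keep adding chunks for the other requirements, so such configurations do arise in $G$. The fix you invoke --- choosing the thresholds $t_s$ to grow so fast that ``any oracle segment relevant to a bounded-use computation is trapped inside a single $\mu$-small chunk'' --- is not available: the use of a convergent computation cannot be bounded in advance of the construction, and no choice of thresholds makes the union of two chunks separated by a $g$-large gap $\mu$-small (the gaps make it larger, not smaller). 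Nor do the two preceding theorems contain any such threshold device to imitate; their divergence cases rest on the searched class covering the actual future configurations, which your restriction to $\mu$-small sets destroys.

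The paper's proof avoids this by \emph{not} restricting the search to small extensions. It searches, for each $n$, for a pair of finite sets $E, H \geq \max(t,n)$ with $\Phi_{e,k}^{F_s \cup E}(n)\downarrow$, $\Phi_{e,k}^{F_s \cup H}(n)\downarrow$, and $H$ $g$-transitive \emph{according to the stage-$\max E$ approximation} $\emptyset'_{\max E}\uh \min E$, and sets $U_n$ to be the union of the two outputs, a $2k$-set. If every $U_n$ is defined, constant-bound ($2k$-)immunity gives some $U_n$ disjoint from $B_i$; then either $E$ is $g$-small, hence itself a legitimate transitive chunk, and one extends by $E$, or $E$ is $g$-large, and since $g$ dominates the modulus $\mu_{\emptyset'}$, $\max E$ bounds the settling time of $\emptyset'$ below $\min E$, so the approximate check is correct and $H$ is genuinely $g$-transitive, and one extends by $H$. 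If some $U_n$ is undefined, the search has in effect covered \emph{all} $g$-transitive finite sets beyond the threshold, and since the future part of $G$ is such a set, divergence can honestly be forced. This timing/certification use of the hypothesis $g \geq \mu_{\emptyset'}$ (an arbitrary convergent $E$ either is a usable small chunk or certifies the $\emptyset'$-approximation needed to recognize a transitive chunk $H$) is the idea your proposal is missing; domination of $\mu_{\emptyset'}$ is needed for certification, not merely to make $\mu$-small extensions computably recognizable.
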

\begin{proof}
  We construct the set $G$ by the finite extension method. For this,
  we define a sequence $F_0 \subsetneq F_1 \subsetneq \dots$ of finite
  $g$-transitive sets such that $F_{n+1} \setminus F_n > F_n$, and
  then set $G = \bigcup_n F_n$. At stage $s = \langle e, k, i\rangle$,
  we want to satisfy the following requirement:
$$
\Rcal_{e,k,i} \mbox{ : } \Phi_{e,k}^G \mbox{ is finite or } (\exists
n)\Phi_e{e,k}^G(n) \cap B_i = \emptyset
$$
where $\Phi_{0,k}, \Phi_{1,k}, \dots$ is an effective enumeration of
all $k$-enumeration functionals, that is, whenever
$\Phi^G_{e,k}(n) \downarrow$, then $\Phi^G_{e,k}(n)$ is interpreted as
a $k$-set of elements greater than $n$.  Assume $F_s$ is already
defined, and fix some threshold $t$ such that $[\max F_s, t]$ is
$g$-large.

Define a partial computable $2k$-enumeration $U_0, U_1, \dots$ as
follows.  For every $n$, computably search for two finite sets
$E, H \geq \max(t, n)$ such that
$\Phi_{e,k}^{F_s \cup E}(n)\downarrow$,
$\Phi_{e,k}^{F_s \cup H}(n)\downarrow$, and $H$ is $g$-transitive
according to $\emptyset'_{\max E}\uh \min E$, where $\emptyset'_t$ is
the approximation of $\emptyset'$ at stage $t$. If not found, $U_n$ is
undefined. Otherwise,
$U_n = \Phi_{e,k}^{F_s \cup E}(n) \cup \Phi_{e,k}^{F_s \cup
  H}(n)$. Note that since $g$ is $\Delta^0_2$, $\emptyset'$ can decide
whether a set is $g$-small or not. We have two cases.

Case 1: there is some $n$ such that $U_n$ is undefined. Suppose first
there is some $g$-transitive finite set $E \geq \max(t, n)$ such that
$\Phi_{e,k}^{F_s \cup E}(n)\downarrow$. Let $u$ be sufficiently large
so that $E$ is $g$-transitive according to $\emptyset'_u \uh
u$. Letting $F_{s+1} = F_s \cup \{u\}$, we have forced
$\Phi^G_{e,k}(n)\uparrow$. Suppose now that whenever
$E \geq \max(t, n)$ is $g$-transitive, then
$\Phi_{e,k}^{F_s \cup E}(n)\uparrow$. Then letting
$F_{s+1} = F_s \cup \{t\}$, we have forced $\Phi^G_{e,k}(n)\uparrow$.

Case 2: $U_n$ is defined for every $n$. Then by constant-bound
immunity of $B_i$, there must be some $n$ such that
$U_n \cap B_i = \emptyset$. Let $E$ and $H$ witness that $U_n$ is
defined. Suppose first that $E$ is $g$-small. Then letting
$F_{s+1} = F_s \cup E$, we have forced
$\Phi_{e,k}^G(n)\downarrow \cap B_i = \emptyset$. Suppose now that $E$
is $g$-large. Since $g$ dominates $\mu_{\emptyset'}$,
$\emptyset'_{\max E}\uh \min E$ agrees with $\emptyset'$ and $H$ is
truly $g$-transitive. Then letting $F_{s+1} = F_s \cup H$, we have
forced $\Phi_{e,k}^G(n)\downarrow \cap B_i = \emptyset$. We then go to
the next stage.  This completes the construction.
\end{proof}








\bibliographystyle{plain} 

\bibliography{thin.bib}

\end{document}